\newtheorem{theorem}{Theorem}[section]
\newtheorem{lemma}[theorem]{Lemma}
\newtheorem{proposition}[theorem]{Proposition}
\newtheorem{corollary}[theorem]{Corollary}
\theoremstyle{definition}
\newtheorem{definition}[theorem]{Definition}
\newtheorem{example}[theorem]{Example}
\newtheorem{remark}[theorem]{Remark}
\theoremstyle{definition}
  \newtheorem*{proof1}{Proof of Theorem \ref{T2}}
  \newtheorem*{proof2}{Proof of Theorem \ref{T3}}
\newcommand{\thistheoremname}{}
\newtheorem{genericthm}[theorem]{\thistheoremname}
\newtheorem*{genericthm*}{\thistheoremname}
\newenvironment{namedthm*}[1]
  {\renewcommand{\thistheoremname}{#1}%
   \begin{genericthm*}}
  {\end{genericthm*}}
\def\HFKhat{\widehat{\mathit{HFK}}}
\newcommand\tsim{\kern-.4em\sim}
\newcommand{\interior}{\mathrm{int}}
\newcommand{\B}{\mathcal{B}}
\newcommand{\N}{\mathcal{N}}
\newcommand{\R}{\mathbb{R}}
\newcommand{\Z}{\mathbb Z}
\newcommand{\sL}{{\mathcal L}}
\newcommand{\Q}{\mathbb Q}
\theoremstyle{empty}
\def\spl{\backslash\backslash}
\begin{document}

% Declarations for Front Matter

\title{Guts in Sutured Decompositions and the Thurston Norm}

\author[Ian Agol]{%
        Ian Agol} 
\address{%
    University of California, Berkeley} 
\email{% 
     ianagol@math.berkeley.edu}  
 \author[Yue Zhang]{%
        Yue Zhang} 
\address{%
    University of California, Berkeley
} 
\email{% 
     yue\_zhang@berkeley.edu}

\maketitle

\begin{abstract}
	We construct an invariant called guts for second homology classes in irreducible 3-manifolds with toral boundary and non-degenerate Thurston norm. We prove that the guts of second homology classes in each Thurston cone are invariant under a natural condition. We show that the guts of different homology classes are related by sutured decompositions. As an application, an invariant of knot complements is given and is computed in a few interesting cases. Besides, we show that the dimension of a maximal simplex in a Kakimizu Complex is an invariant.

\end{abstract}

\section{Introduction}

Thurston \cite{Thurston} defined a norm  on the homology $H_2(M,\partial M)$ called the {\it Thurston norm} for a compact 3-manifold $M$, which is an important invariant of 3-manifolds. Gabai \cite{Ga1} generalized the definition and defined the notion of sutured manifolds which is related to foliations of manifolds. Furthermore, one can decompose a sutured manifold into two submanifolds, the windows and the guts, specializing the notion of guts defined by Gabai and Kazez for laminations \cite{GK}. Guts can be used to understand how close a manifold is to a fibration. Agol \cite{Ag1} and later Friedl and Kitayama \cite{FK} used guts to show that an irreducible 3-manifold with fundamental group that satisfies a certain group-theoretic property called $RFRS$ is virtually fibered.

Guts is also defined for pared manifolds. It reveals information about the volume of a hyperbolic manifold. Agol, Storm and Thurston \cite{agol2005lower} gave an estimate of the volume  of a manifold from the volume of guts related to incompressible surfaces. Agol \cite{agol2010minimal} and Yoshida \cite{yoshida2013minimal} found the minimal volumes of 2-cusped and 4-cusped orientable hyperbolic 3-manifolds respectively with the help of estimating the volume of guts. 

In this paper, we give a definition for a \emph{facet surface} of a primitive homology class in an irreducible 3-manifold as a maximal collection of disjointly embedded Thurston norm minimizing surfaces for that homology class and define the \emph{guts} of this {homology class} as the nontrivial parts in the sutured decomposition of the 3-manifold along a facet surface. 

In the following, we consider irreducible, orientable manifolds with torus boundary components and non-degenerate Thurston norm.

We first show that the guts for homology classes are well defined.

\begin{theorem} \label{thm:gutsforhomology}

	Let $M$ be an irreducible, orientable $3$-manifold with boundary a disjoint union of tori $\sqcup_{i=1}^n P_i$ and non-degenerate Thurston norm. The guts $\Gamma(M,F)$ for facet surfaces representing a primitive element $z$ in $H_2(M,\partial M;\Z)$ do not depend on the selection of facet surfaces and product decomposition surfaces up to the equivalence of guts.
\end{theorem}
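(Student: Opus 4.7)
The plan is to establish invariance in two stages: first, for a fixed facet surface $F$, show that different choices of product decomposition surfaces yield equivalent guts; second, show that two different facet surfaces representing the same primitive class $z$ yield equivalent guts.

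For the first stage, after decomposing $M$ along a facet surface $F$ to obtain the sutured manifold $(M\spl F,\gamma)$, any maximal system of product decomposition disks and annuli cuts off product sutured pieces (the windows) and leaves complementary pieces which by definition are the guts. I would compare two such systems by isotoping them into general position and repeatedly resolving intersections using the standard operations from Gabai's sutured manifold theory, together with the fact that a product decomposition surface can be isotoped across any product region it meets. The non-degeneracy of the Thurston norm prevents spurious spherical or toral components from complicating the analysis, so the resulting guts pieces are well-defined up to isotopy.

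For the second stage, take two facet surfaces $F_1, F_2$ both representing the primitive class $z$. After isotopy, place them in general position and form the oriented double curve sum $F_1 \oplus F_2$, which represents $2z$ and, by Thurston's additivity of the norm on oriented sums of norm-minimizing surfaces, is itself norm-minimizing. Since $z$ is primitive and $F_1 \oplus F_2$ realizes the norm of $2z$, its components can be reorganized into two disjoint norm-minimizing representatives of $z$. Maximality of $F_1$ then forces any component of $F_2$ disjoint from $F_1$ to be parallel, through a product region of $M\spl F_1$, into a sub-collection of $F_1$. Hence $F_2$ differs from $F_1$ only by modifications inside the windows, and so the two facet surfaces determine equivalent guts.

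The main obstacle is the oriented-sum step: one must verify carefully that double curve surgery produces a surface that really does decompose into two disjoint norm-minimizing representatives of $z$, and that each component of $F_2$ lying outside $F_1$ can be isotoped through a product region of $M\spl F_1$ into $F_1$. This demands a delicate interplay between Thurston's additivity of the norm, the primitivity of $z$, and the maximality built into the definition of a facet surface; it is also where the torus-boundary and non-degeneracy hypotheses are essential, since they rule out ``degenerate'' parallel pieces that would spoil the correspondence between $F_1$ and $F_2$ modulo windows.
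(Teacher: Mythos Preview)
Your Stage~1 is essentially fine and matches the paper: independence from the choice of product decomposition surface is a JSJ-type uniqueness statement, already built into Definition~\ref{reduced}.

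Stage~2, however, has a genuine gap at the key step. You assert that after forming $F_1\oplus F_2$ and invoking maximality, ``$F_2$ differs from $F_1$ only by modifications inside the windows.'' But this is exactly the content of the theorem, not a consequence of the double curve sum. The patches of $F_2\cap (M\spl F_1)$ are not a~priori disjoint from the guts of $M\spl F_1$: they may be surfaces of negative Euler characteristic running through a guts component, or non-product annuli with both boundary circles on $R_+$ (or $R_-$). Maximality of $F_1$ tells you that any \emph{closed} norm-minimizing surface homologous to a component of $F_1$ and disjoint from $F_1$ is parallel into $F_1$, but it says nothing directly about these patches. Your ``reorganize $F_1\oplus F_2$ into two disjoint representatives of $z$'' step does not recover $F_2$ itself, so it cannot be used to control where $F_2$ sits relative to the guts of $F_1$.

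The paper handles precisely this difficulty via Lemma~\ref{lem:belly}. For the negative-Euler-characteristic patches $T_0$ of $F_2\spl F_1$, one invokes Scharlemann's isotopy lemma (Lemma~\ref{avoidintersection}) to show $kF_1\oplus F_2$ can be isotoped off $F_1$, hence $T_0$ is boundary-parallel in $M\spl F_1$; one then argues component-by-component that decomposing along $T_0$ does not change the guts, with a separate case analysis when the induced annuli are non-product (using the horizontally-prime property, Lemma~\ref{lem:reducedhorizontallyprime}). For the annulus patches, the paper introduces the notions of \emph{bad region} and \emph{4-ST} (Lemmas~\ref{lem:2-non-product}--\ref{lem:4-ST-2}) and the intermediate object $\B(F_1,F_2)$, the \emph{belly}, to deal with the possibility that $F_1\oplus F_2$ has null-homologous torus components. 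The symmetry $\B(F_1,F_2)\simeq\B(F_2,F_1)$ then closes the argument. None of this machinery is present in your proposal, and the phenomena it addresses (non-product annuli, bad boundaries, 4-ST guts) are real obstructions that your sketch does not confront.
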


By constructing branched surfaces, we prove that the guts are invariant for second homology classes in each Thurston cone under a natural condition (see Definition \ref{def:guts:restriction}). This is proved by using a nice property of double curve sum operations \cite[Proposition 2.8]{miller2019effect}.

\begin{theorem}\label{thm:samegutsboundarycase}
Let $M$ be an irreducible, orientable $3$-manifold with boundary a disjoint union of tori $\sqcup_{i=1}^n P_i$ and non-degenerate Thurston norm. Let $y,z$ be two elements in an open face of the Thurston sphere. If there is an open segment $(v,w)$ containing $y,z$ in the open face such that the restrictions of $v$ and $w$ on each boundary component are not in opposite orientations, the guts $\Gamma(y)$ is equivalent to $\Gamma(z)$.
\end{theorem}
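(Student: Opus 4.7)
The plan is to construct a single branched surface $B \subset M$ that carries (after appropriate isotopy and with positive integer weights) Thurston norm minimizing representatives of every rational class on the segment between $v$ and $w$, and to identify the guts of these classes with the non-product complementary pieces of $M \ssm N(B)$.

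First, I would reduce to the case where $y, z$ are proportional to primitive integer classes and the segment between two such integer classes lies entirely in the open face and satisfies the boundary-orientation hypothesis; by Theorem~\ref{thm:gutsforhomology}, the guts depend only on the homology class, so any facet surface representative may be chosen. Let $F_y$ and $F_z$ be facet surfaces. After putting them in general position, the boundary-orientation hypothesis ensures that on each boundary torus $P_i$ the curves $F_y \cap P_i$ and $F_z \cap P_i$ have compatible coherent orientations, so the oriented double curve sum $F_y + F_z$ is well defined as an embedded surface. By \cite[Proposition~2.8]{miller2019effect}, since $y$ and $z$ lie in the same Thurston cone, $F_y + F_z$ is Thurston norm minimizing and represents $y+z$.

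Next, I would build a branched surface $B$ whose sectors are the pieces of $F_y \cup F_z$ cut along the double curves, with the branch locus oriented compatibly with the cut-and-paste resolution. By construction, $F_y$, $F_z$, and every positive integer combination $m F_y + n F_z$ (realized as iterated oriented double curve sums, which represent every positive rational class on the open segment) are fully carried by $B$. The complement $M \ssm N(B)$ decomposes into components, each either a product region (an $I$-bundle piece whose induced sutured structure is a product window) or a non-product region; the non-product components are the candidates for the guts. Then I would verify that for every rational class on the segment, the sutured decomposition of $M$ along the corresponding carried facet surface recovers exactly the product pieces of $M \ssm N(B)$ as windows and the non-product pieces as guts. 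Because these pieces are intrinsic to $B$ and independent of the weights, one obtains $\Gamma(y) \sim \Gamma(z)$ for primitive rational classes on the segment; the general case for $y, z$ in the open face follows by approximating with primitive integer classes on the same segment and invoking Theorem~\ref{thm:gutsforhomology}.

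The main obstacle will be confirming that the product/non-product decomposition of $M \ssm N(B)$ is stable across all carried weights and truly matches the sutured-decomposition description of the guts, in particular that no product piece of $B$ fails to be a window under some particular weight. The boundary-orientation hypothesis is essential here: if $v$ and $w$ had opposite orientations on some boundary torus $P_i$, the oriented double curve sum would cancel along $P_i$, so $B$ would fail to carry $F_y + F_z$ with positive weights near $P_i$, introducing spurious or ill-defined regions and breaking the identification of guts with non-product complementary components of $N(B)$.
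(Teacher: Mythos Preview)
Your overall strategy---build a branched surface and identify the guts with its non-product complementary pieces---matches the paper's, but there is a genuine gap in your choice of generating surfaces. You build $B$ from facet surfaces $F_y,F_z$ for the classes $y,z$ themselves. Then the fully carried surfaces $mF_y\oplus nF_z$ with $m,n>0$ represent only classes on the \emph{open} segment $(y,z)$; the endpoints $y$ and $z$ correspond to degenerate weights $(1,0)$ and $(0,1)$, where $F_y$ (resp.\ $F_z$) is not fully carried. Your claimed identification ``non-product pieces of $M\setminus N(B)$ equal $\Gamma(y)$'' would require that further cutting $M\spl F_y$ along the patches of $F_z$ does not change the guts---but $F_z$ is a facet surface for $z$, not for $y$, so Lemma~\ref{lem:belly} does not apply, and in fact this is essentially what you are trying to prove. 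The paper avoids this by building $B$ from properly norm-minimizing surfaces $S_v,S_w$ for the \emph{endpoints} $v,w$ of the larger open segment (constructed via Proposition~\ref{prop:maggie} so that $S_v\setminus S_w$ and $S_w\setminus S_v$ have no non-negative-Euler-characteristic components). Then both $y$ and $z$ are represented by $aS_v\oplus bS_w$ with $a,b>0$, hence fully carried, and the argument applies uniformly to them.

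The second gap is the step you yourself flag as the ``main obstacle.'' The non-product complementary pieces of $M\setminus N_B$ are not literally the guts of any carried class: they may still contain horizontal surfaces. The paper's resolution is to take a facet surface $F\supset S=aS_v\oplus bS_w$, show (using the $I$-bundle structure $L_S$ and Lemma~\ref{lem:productsuturedmanifold}) that $2F\cap(M\setminus N_B)$ is a \emph{maximal} collection of horizontal surfaces in $M\setminus N_B$, and conclude that $\Gamma(av+bw)$ is the \emph{horizontally prime guts} of $M\setminus N_B$. Corollary~\ref{cor:sutured guts} then gives well-definedness independent of $a,b$. Your sketch does not supply this step, and without it the identification of guts with complementary pieces of $B$ is unjustified.
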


Agol's criteria for virtual fibering is based on a key finding \cite[Lemma 4.1]{Ag1} and we state a generalization of it. 

\begin{theorem} \label{T2}
	Let $z$ lie on a $k$-codimensional open Thurston cone $\Delta$. We denote the subspace spanned by $\Delta$ as $V$. Then the kernel of the restriction map $\varphi:H_2(M,\partial M;\Z) \to H_2(\Gamma(z),\partial \Gamma(z);\Z)$ is a subspace of $V$.
	
	If there is a neighborhood of $z$ in $\Delta$ which does not contain two elements whose restrictions on a boundary component are in opposite orientations, the kernel of $\varphi$ is exactly $V$ and the rank of the image of $\varphi$ is $k$, i.e. the rank of the image of 
	$$  H^1(M) \to H^1(\Gamma(z))$$
	is $k$.
\end{theorem}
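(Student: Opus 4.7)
The plan is to link the restriction map $\varphi$ to the sutured manifold decomposition of $M$ along a facet surface $F = \sqcup_i F_i$ representing $z$, and then, using the oriented double curve sum machinery of \cite[Proposition 2.8]{miller2019effect} that underlies the proof of Theorem~\ref{thm:samegutsboundarycase}, to pass between nearby homology classes in the open cone $\Delta$.

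\textbf{Part 1 ($\ker\varphi \subseteq V$).} Let $\alpha \in \ker\varphi$ and let $S$ be a Thurston norm minimizing surface representing $\alpha$. By standard oriented cut-and-paste with $F$ performed in the norm preserving direction, assume $S$ is still norm minimizing and $S \cap F$ is a disjoint union of circles. Since $\varphi(\alpha) = 0$, the piece $S \cap \Gamma(z)$ is null-homologous in $(\Gamma(z), \partial \Gamma(z))$, and replacing it by a bounding chain supported in $\partial \Gamma(z)$ pushes $S$ into the windows $W$ of the sutured decomposition. Because each component of $W$ is an $I$-bundle whose horizontal boundary is parallel to some component of $F$, any relative $2$-cycle supported in $W$ represents a class in $\mathrm{span}\{[F_i]\}$. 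By Thurston's theorem on the linearity of the norm along a face, $\mathrm{span}\{[F_i]\} = V$, so $\alpha \in V$.

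\textbf{Part 2 ($V \subseteq \ker\varphi$ and the rank computation).} Let $U \subset \Delta$ be the neighborhood of $z$ provided by the hypothesis. For any $y \in U$, Theorem~\ref{thm:samegutsboundarycase} gives $\Gamma(y)$ equivalent to $\Gamma(z)$, and we may choose a facet surface $F_y$ for $y$ whose sutured decomposition shares the same guts. The no-opposite-orientation condition permits the oriented double curve sum of $F$ and $F_y$, and \cite[Proposition 2.8]{miller2019effect} ensures that this sum is norm minimizing; iterating and eliminating intersections allows $F_y$ to be taken disjoint from $\Gamma(z)$, hence $F_y \subset W$. Therefore $\varphi([F_y]) = 0$, so $y \in \ker\varphi$. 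As $y$ ranges over $U$, the classes $y$ span $V$, giving $V \subseteq \ker\varphi$. Combined with Part~1, $\ker\varphi = V$, and rank-nullity yields $\mathrm{rank}\,\mathrm{image}(\varphi) = n - (n-k) = k$, where $n = \dim H_2(M,\partial M;\Q)$. The equivalent statement for cohomology follows from Poincaré-Lefschetz duality $H^1(M) \cong H_2(M,\partial M)$ and $H^1(\Gamma(z)) \cong H_2(\Gamma(z), \partial \Gamma(z))$, under which $\varphi$ corresponds to the restriction $H^1(M) \to H^1(\Gamma(z))$ induced by inclusion.

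The main obstacle is controlling the Thurston norm while simultaneously respecting the sutured decomposition. In Part~1, pushing $S$ off the guts without losing norm minimality requires a careful Gabai style cut-and-paste inside the sutured manifold cut along $F$; in Part~2, the no-opposite-orientation hypothesis is exactly what is needed for \cite[Proposition 2.8]{miller2019effect} to guarantee that the double curve sum of $F$ and $F_y$ stays norm minimizing, so that $F_y$ actually escapes the guts rather than intersecting it essentially.
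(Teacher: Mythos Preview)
Your Part~1 has a genuine gap. The claim ``any relative $2$-cycle supported in $W$ represents a class in $\mathrm{span}\{[F_i]\}$'' is false, and the subsequent identification $\mathrm{span}\{[F_i]\}=V$ is also wrong. Recall that every $F_i$ in a facet surface represents the \emph{same} class $z$, so $\mathrm{span}\{[F_i]\}=\langle z\rangle$, which is one-dimensional, not $V$. More fundamentally, the complement $M'=M\setminus\mathrm{int}\,\Gamma(z)$ (which is what you get after pushing $S$ off the guts) carries plenty of $2$-cycles that are not horizontal: for any essential curve $c$ in a window base, $c\times I$ glues up to an annulus or torus whose class need not lie in $V$. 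So knowing that $\alpha$ is represented in $M'$ tells you nothing, by itself, about which Thurston face $\alpha$ lies on.

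The paper's argument for $\ker\varphi\subset V$ is genuinely different and not replaceable by a homological bookkeeping step. It first upgrades Lemma~\ref{avoid} to Lemma~\ref{minavoid}: a \emph{properly norm-minimizing} representative of some $Nz+u$ (with $u\in\ker\varphi$) avoids the guts. Then, using the natural product foliation $\mathscr F$ on $M'$ and Gabai-type surgeries (Lemma~\ref{collection}, \cite[Lemma~3.11]{Ga1}), it replaces this surface by one whose tangent plane makes angle $\le\pi/2$ with $\mathscr F$ everywhere. The point of this is the Euler-class identity $\chi_-(\Omega)=-e(\mathscr F)([\Omega])$ \cite[Lemma~10.5.7]{CC2}, which shows that the Thurston norm is \emph{linear} on a neighborhood of $z$ inside $\ker\varphi$. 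Linearity of the norm near $z$ is exactly what forces that neighborhood into a single open face, hence $\ker\varphi\subset V$. Your argument never touches the norm on $\ker\varphi$, so it cannot reach this conclusion.

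Your Part~2 is close in spirit to the paper's, but the step ``iterating and eliminating intersections allows $F_y$ to be taken disjoint from $\Gamma(z)$'' is not justified: the equivalence $\Gamma(y)\cong\Gamma(z)$ is only up to ambient isotopy, and does not by itself let you isotope $F_y$ off a \emph{fixed} copy of $\Gamma(z)$. The paper instead takes $S_{z+u}$ and $S_{z-u}$ from Proposition~\ref{prop:maggie}, forms $S_{z+u}\oplus S_{z-u}$ (representing $2z$), enlarges it to a facet surface $F(z)$, and then uses Lemma~\ref{lem:annulus} to show that the contact annuli $A$ from the double curve sum avoid $\Gamma(F(z))$. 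Since $S_{z\pm u}$ live in a neighborhood of $(S_{z+u}\oplus S_{z-u})\cup A$, they avoid the guts, and Lemma~\ref{avoid} gives $\varphi(z\pm u)=0$. You should replace your disjointness assertion with this concrete mechanism.
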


The first part of this theorem is proved by using the canonical foliation on the complement of the guts. We prove the opposite direction via double curve sum operations.

With the help of the preceding theorem, we show that guts of homology classes are related by sutured decompositions.

\begin{theorem} \label{T3}
Let $M$ be an irreducible connected 3-manifold with toral boundary and non-degenerate Thurston norm.	 Let $z$ be an element on a $k$-codimensional $(k>0)$ open Thurston face of the Thurston norm. For any choice of $u$ in any $k'$-codimensional $(k' < k)$ open Thurston face whose closure contains $z$, there exists a $w$ on the open segment $(z,u)$ such that $\Gamma(z)$ can be nontrivially decomposed along a properly norm-minimizing surface representing $w$.
\end{theorem}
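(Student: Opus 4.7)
The plan is to pick $w = (1-t)z + tu$ for a small rational $t = n/m \in (0,1)$ and exhibit a properly norm-minimizing surface in $\Gamma(z)$ representing (a positive multiple of) $w$, obtained by double curve sum. Fix a facet surface $F$ for $z$ giving the guts $\Gamma(z)$, and let $U$ be a norm-minimizing representative of $u$. Denote by $\Delta_z, \Delta_u$ the open Thurston faces through $z, u$, with linear spans $V_z, V_u$. Since $\Delta_u$ is convex with $z \in \overline{\Delta_u}$ and $u \in \mathrm{relint}(\Delta_u)$, the open segment $(z,u)$ lies in $\Delta_u$. Construct $W$ by placing $(m-n)$ parallel copies of $F$ and $n$ general-position copies of $U$ in $M$ and applying double curve sum at every intersection curve, so that $[W] = mw$ lies in the cone over $\Delta_u$. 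By \cite[Proposition 2.8]{miller2019effect}, $W$ is Thurston norm minimizing in $M$.

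Next I apply Theorem \ref{T2} to show that the restriction $W \cap \Gamma(z)$ is homologically essential. Let $\varphi: H_2(M,\partial M;\Z) \to H_2(\Gamma(z),\partial\Gamma(z);\Z)$ denote the restriction map. Because $F$ can be isotoped into $\partial \Gamma(z)$ through the window region, $\varphi(z) = 0$. One checks (using that $\Delta_z$ is a proper face of $\overline{\Delta_u}$ cut out by a supporting hyperplane, together with a dimension count $\dim V_z = \dim \Delta_z$ and $\dim V_u = \dim \Delta_u$) that $V_z \cap \Delta_u = \emptyset$, and hence $u \notin V_z$. By Theorem \ref{T2}, $\ker \varphi \subseteq V_z$, so $\varphi(u) \neq 0$ and $\varphi(w) = t\varphi(u) \neq 0$. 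Thus $[W \cap \Gamma(z)] = m\varphi(w)$ is nonzero in $H_2(\Gamma(z), \partial\Gamma(z);\Z)$, and in particular $W\cap\Gamma(z)$ is a nonempty properly embedded surface.

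The main obstacle is verifying that $W \cap \Gamma(z)$ is norm-minimizing in $\Gamma(z)$ for the sutured Thurston norm. The expected argument is a cut-and-paste: if some homologous $S' \subset \Gamma(z)$ had strictly smaller sutured complexity, then gluing the window portion $W \setminus \Gamma(z)$ (which lies in product $I$-bundles attached along the horizontal boundary of $\Gamma(z)$) to $S'$ along matching boundary curves on $\partial \Gamma(z) \cap \partial(M \setminus \Gamma(z))$ would produce a representative of $mw$ in $M$ of strictly smaller Thurston norm, contradicting the minimality of $W$ established in the first paragraph. Carrying this out requires arranging the boundary of $S'$ on the horizontal boundary of $\Gamma(z)$ to agree with that of $W \cap \Gamma(z)$ after an isotopy fixing the homology class, and using the $I$-bundle structure of the windows to ensure additivity of the Thurston complexity across the decomposition.

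Once the norm-minimality step is granted, any essential component of $W \cap \Gamma(z)$ is a properly norm-minimizing surface in $\Gamma(z)$ whose image in $H_2(M,\partial M;\Z)$ is a positive integer multiple of $w$. Since this surface is homologically essential in $\Gamma(z)$, the sutured decomposition of $\Gamma(z)$ along it is nontrivial, yielding the desired $w$.
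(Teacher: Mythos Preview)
Your overall strategy mirrors the paper's: use Theorem~\ref{T2} to see that $u\notin\ker\varphi$, so any representative of $w=mz+u$ meets $\Gamma(z)$ homologically nontrivially, and then conclude nontriviality of the decomposition via Theorem~\ref{thm:complexityfunction}. The difference is in how the surface is produced. The paper does not build $W$ by a naive double curve sum of $F$ and $U$; it invokes Theorem~\ref{FK} (the Friedl--Kitayama construction) to obtain a surface $\Sigma$ representing $mz+u$ with the specific properties that (i) $\Sigma\oplus(F\times\{\pm3\})$ is properly norm-minimizing, (ii) $\Sigma$ is a product in a collar of $F$, and most importantly (iii) $\Sigma\cap X$ is a \emph{decomposition surface} for every gut or window $X$.

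This last point is exactly the gap in your argument. For the sutured decomposition of $\Gamma(z)$ along $W\cap\Gamma(z)$ even to be defined, $W\cap\Gamma(z)$ must satisfy the conditions of Definition~\ref{def:decompositionsurface}: each boundary curve on a sutured annulus must be either an essential arc or a core curve with the correct orientation, and no boundary curve may bound a disk in $R_\pm$. A raw double curve sum $(m-n)F\oplus nU$ gives no control over how $U$ meets the product annuli and disks separating windows from guts, so there is no reason $W\cap\Gamma(z)$ meets $\gamma(\Gamma(z))$ correctly. Relatedly, Theorem~\ref{thm:complexityfunction} requires the decomposed manifold $(M',\gamma')$ to be taut, which in the paper follows from property~(2) of Theorem~\ref{FK}; your cut-and-paste sketch would need to establish this as well, and the boundary-matching step you describe (``arranging the boundary of $S'$ \ldots to agree with that of $W\cap\Gamma(z)$'') is precisely the content that Theorem~\ref{FK} packages. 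Finally, your appeal to \cite[Proposition~2.8]{miller2019effect} needs the hypothesis that the restrictions of $z$ and $u$ to each boundary torus are not in opposite orientations, which you have not checked (and which is why the paper works with $mz+u$ for large $m$ rather than an arbitrary convex combination).
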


 As an application, an invariant of knot complements is given as the guts of a maximal collection of disjoint Seifert surfaces (see Definition \ref{def:gutsofknot}). We find an example of two 2-bridge knots having different guts but the same Floer knot homology in Example \ref{ex:2bridgeknot}.
\newline

{\bf Motivation.} 

Why define the notion of guts associated to a sutured manifold or  a homology class? 
As indicated before, the main theorem is a natural generalization of Thurston's fibered face theorem, 
and a generalization used to understand the virtual fibering  conjecture. But another motivation comes from sutured manifold theory. 
When performing sutured manifold decompositions, the most natural ways one will try are horizontal and vertical decompositions. What the main theorem implies is that after throwing away the product pieces after a maximal such collection of decompositions, the remaining sutured manifold is independent of the choices made. So it gives a weak kind of uniqueness for the initial stages of a sutured manifold hierarchy. Moreover, the sutured Floer homology is invariant under horizontal and vertical decompositions, hence it is natural to look for uniqueness after these operations. 

Another reason to consider these notions is to investigate the Kakimizu complex of isotopy classes of minimal genus surfaces bounding a fixed knot. One consequence of the main theorem is that the dimension of maximal simplices of the Kakimizu complex is an invariant of the complex (and hence of the knot type). A generalization of this statement is proved in Theorem \ref{thm:kakimizu}. Our terminology {\it facet surface} is motivated by these considerations. Moreover, neighboring maximal simplices correspond to replacing one surface with another one intersecting it, and performing an exchange on the order of the guts. So hopefully the results in this paper will give some refined information about the structure of the Kakimizu complex. 

Finally, a motivation for studying these notions comes from trying to find surfaces in the manifold whose complement has maximal volume guts, in order to get lower bounds on the volume of the manifold. It turns out that the structure uncovered in the main theorems is useful for analyzing configurations of surfaces in manifolds of small volume.

{\bf Organization.}

In Section \ref{sec:guts:thurstonnorm} we recall some standard facts about the Thurston norm and sutured manifolds.

In Section \ref{sec:guts:guts}, we define a facet surface of a second homology class in a 3-manifold as a maximal collection of properly norm-minimizing surfaces and the guts of this homology class as the nontrivial parts in the sutured decomposition of the 3-manifold along a facet surface. We show that the guts are invariant among facet surfaces representing the same homology class.

In Section \ref{sec:guts:cone}, we show that if two elements in an open face of the Thurston sphere don't meet any boundary component in opposite orientation, their guts are equivalent. 

After that, we find a relation between the open Thurston cone $\Delta$ and the restriction map $\varphi $ from $H_2(M,\partial M)$ to $H_2(\Gamma(z),\partial \Gamma(z))$ for an element $z$ in $\Delta$ in Section \ref{sec:guts:map}.

With the previous results, we are able to show that guts of second homology classes can be decomposed by surfaces in the manifold in Section \ref{sec:guts:decomposition}.

In Section \ref{sec:guts:example}, we give some examples of guts and define an invariant for knot complements. One can gain intuition of the main theorems from this section before trying to understand the proofs.

In Section \ref{sec:guts:kakimizu}, we prove that the dimension of maximal simplices of a Kakimizu complex is an invariant of the complex.

{\bf Acknowledgements.}
	
This research is supported by a Simons Investigator award and by the Mathematical Sciences Research Institute. We thank Nathan Dunfield for allowing us to use his drawings. The second author is thankful to Chi Cheuk Tsang, Yi Liu, Yi Ni, Maggie Miller, Michael Landry, Andras Juhasz, and Jiajun Wang for helpful comments and conversations. The second author is grateful to the first author for the guidance.

\section{The Thurston norm and Sutured Decompositions} \label{sec:guts:thurstonnorm}

This section reviews some background material on the Thurston norm \cite{Thurston} and sutured manifolds \cite{Ga1}. 

We denote $\eta(X)$ as a regular neighborhood of $X$. For a properly embedded surface $S \subset M$, $\eta(S)$ is an open tubular neighborhood of $S$ homeomorphic to the normal bundle, and we use $M \spl S$ to indicate ${M\backslash \eta(S)}$.

In this paper, we sometimes omit the $\Z$ coefficient in homology and cohomology groups. 

\begin{definition}
	
Let $S$ be a compact connected orientable surface. Define 
$\chi_{-}(S)=\max\{-\chi(S), 0\}$. For a disconnected compact orientable surface
$S$, let $\chi_-(S)$ be the sum of $\chi_-$ evaluated on the connected subsurfaces of $S$.

\end{definition}
In \cite{Thurston}, Thurston defined a pseudonorm on $H_2(M,\partial M;\R)$
and $H_2(M;\R)$, and this was generalized by Gabai \cite{Ga1}.

\begin{definition}
Let $M$ be a compact oriented 3-manifold. Let $K$ be a subsurface
of $\partial M$. Let $z\in H_2(M,K)$. Define the \emph{Thurston norm} of $z$
to be
$$x(z)=\min\{\chi_-(S)\ |\ (S,\partial S)\subset (M,K),\ \text{and} \ [S]=z\in H_2(M,K)\}.$$
\end{definition}
Thurston proved that $x(nz)=n x(z)$, and therefore $x$ may
be extended to a norm $x:H_2(M,K;\Q)\to \Q$. Then $x$
is extended to $x:H_2(M,K;\R)\to \R$ by continuity.  

Denote by $B_x(M)$ the unit ball of the Thurston norm on $H_2(M,\partial M)$. By \cite[Theorem 2]{Thurston}, $B_x(M)$ is a polyhedron and hence we can decompose $B_x(M)$ into faces of various dimensions. We call $B_x(M)$ \emph{the Thurston ball}, $\partial B_x(M)$ \emph{the Thurston sphere}, each face of $\partial B_x(M)$ \emph{a Thurston face} and a cone formed by the origin and a Thurston face \emph{a Thurston cone}. When we say $z$ is on a Thurston face, we actually mean a multiple of $z$ is on a Thurston face.

\begin{remark}
	When the interior of $M$ is a hyperbolic manifold of finite volume, the Thurston norm is non-degenerate.
\end{remark}

\begin{definition}
Let $M$ be a compact $3$-manifold. Let $S$ be an oriented surface representing $\alpha\in H_2(M,\partial M)$ with $\alpha\neq 0$. We say that $S$ is {\emph{norm-minimizing}} if the following conditions hold:

\begin{itemize}
\item $S$ has no null-homologous subset of components in $H_2(M,\partial M)$,
\item $\chi_-(S)=x_M(\alpha)$.

\end{itemize}

\end{definition}

When the boundary of a manifold is not just a union of tori, we define a notion of taut surfaces.

\begin{definition}
Let $M$ be a compact $3$-manifold. An oriented surface $(S,\partial S)$ is \emph{taut} if 
	\begin{itemize}
		\item $S$ is incompressible and no collection of components is homologously trivial in $H_2(M,\partial M)$ and,
		\item $S$ has minimal $\chi_-$ of all surfaces representing $[S,\partial S]$ in $H_2(M, \eta(\partial S))$, we have $x([S]) = \chi_-(S)$. Here $\eta(\partial S)$ means the tubular neighborhood of $\partial S$.

	\end{itemize}
 \end{definition}

\begin{lemma} \label{tnms}
	If $P$ is a norm-minimizing (or taut) surface, and $Q$ is a union of components of $P$, then $Q$ is also a norm-minimizing (or taut) surface.
\end{lemma}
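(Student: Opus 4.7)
\textbf{Proof proposal for Lemma \ref{tnms}.} The plan is to verify each clause of ``norm-minimizing'' (respectively ``taut'') for $Q$ by comparing with the complementary collection $R := P \setminus Q$, so that $P = Q \sqcup R$ and $[P] = [Q] + [R]$ in the relevant relative homology group. The argument is a direct consequence of the subadditivity of the Thurston norm together with the additivity of $\chi_-$ on disjoint unions.

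First I would handle the homological clause. Any subcollection of components of $Q$ is in particular a subcollection of components of $P$, so the assumption that no such subcollection of $P$ is null-homologous (in $H_2(M,\partial M)$, or in $H_2(M,\eta(\partial P))$ for the taut case) immediately implies the same for $Q$.

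Next I would handle the norm clause. Suppose for contradiction that $\chi_-(Q) > x_M([Q])$, and pick a surface $Q'$ representing $[Q]$ with $\chi_-(Q') < \chi_-(Q)$. After a small isotopy we may assume $Q'$ is disjoint from $R$, so $Q' \sqcup R$ represents $[P]$. Then
$$x_M([P]) \le \chi_-(Q' \sqcup R) = \chi_-(Q') + \chi_-(R) < \chi_-(Q) + \chi_-(R) = \chi_-(P),$$
contradicting $\chi_-(P) = x_M([P])$. (Equivalently, subadditivity of $x_M$ gives $x_M([P]) \le x_M([Q]) + x_M([R]) \le x_M([Q]) + \chi_-(R)$, and then the same conclusion follows.)

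Finally, for the taut case there is the extra clause of incompressibility, but any compressing disk for $Q$ is automatically a compressing disk for $P$, so incompressibility of $P$ immediately gives incompressibility of $Q$; the norm clause is handled exactly as above, working in $H_2(M, \eta(\partial S))$. The only mild subtlety is making sure one can arrange $Q'$ to be disjoint from $R$ before summing, which is a standard transversality move, so I do not expect any genuine obstacle here; this lemma is essentially a bookkeeping consequence of the definitions.
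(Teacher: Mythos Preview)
Your proof is correct and takes essentially the same approach as the paper: the paper's proof is the single sentence ``This follows immediately from the triangle inequality for the Thurston norm,'' and your argument is simply an explicit unpacking of that triangle inequality together with the additivity $\chi_-(P)=\chi_-(Q)+\chi_-(R)$. Your parenthetical version (using subadditivity of $x_M$ directly rather than building $Q'\sqcup R$) is in fact the cleanest route and makes the disjointness isotopy unnecessary.
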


This follows immediately from the triangle inequality for the Thurston norm.

In this chapter, we use an important tool called double curve sum to analyze the guts of different homology classes. 

\begin{definition}
	Let $S$ and $T$ be two properly embedded oriented surfaces which are in general position. By the standard cut-and-paste technique applied to the intersection curves of $S$ and $T$, we can turn the immersed surface $S\cup T$ into a properly embedded oriented surface $S \oplus T$. The surface $S \oplus T$ is called the \emph{double curve sum} of $S$ and $T$.
\end{definition}

The following lemma and corollary show some simple but important facts about double curve sum.

\begin{lemma} \label{lem:dcs}
	If every component of $S \cap T$ is essential in $S$ and $T$, then $\chi_-(S) + \chi_-(T) = \chi_-(S\oplus T )$.
\end{lemma}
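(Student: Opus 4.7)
The plan is to reduce the $\chi_-$ identity to the easy additivity of the ordinary Euler characteristic under cut-and-paste, and then to argue that the essentiality hypothesis prevents the appearance of any new sphere or disk components in the double curve sum.

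First I would record the unconditional additivity $\chi(S\oplus T)=\chi(S)+\chi(T)$. The surface $S\oplus T$ is obtained from $S\cup T$ by cutting along the $1$-manifold $S\cap T$ and reglueing in the unique way compatible with the orientations. Since each component of $S\cap T$ is a circle or a properly embedded arc with $\chi=0$, the inclusion-exclusion/Mayer--Vietoris formula gives this identity with no hypotheses needed.

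Next I would strip off sphere and disk components of $S$ and $T$. Any sphere component of $S$ meets $T$ (if at all) only in circles that bound disks inside the sphere and hence are inessential in $S$; any disk component of $S$ contains only boundary-parallel arcs and null-homotopic circles, so every intersection with $T$ would be inessential. The essentiality hypothesis therefore forces the sphere and disk components of $S$ (resp.\ of $T$) to be disjoint from $T$ (resp.\ from $S$), so they appear untouched in $S\oplus T$ and contribute $0$ to every $\chi_-$ in sight. Let $S_0$ and $T_0$ be what remains after removing them. Then $\chi_-(S)=\chi_-(S_0)=-\chi(S_0)$ and similarly for $T$, while $\chi_-(S\oplus T)=\chi_-(S_0\oplus T_0)$.

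The crux is then: \emph{$S_0\oplus T_0$ has no sphere or disk component.} Suppose for contradiction some component $C$ has $\chi(C)>0$. Reversing the resolution, write $C$ as a union of pieces $P_1,\dots,P_m$, where each $P_i$ is a component of $S_0\setminus(S\cap T)$ or $T_0\setminus(S\cap T)$, glued along portions of $S\cap T$. By the same $\chi=0$ computation as before, $\chi(C)=\sum_i \chi(P_i)$, so some $P_i$ is itself a sphere or a disk. A sphere $P_i$ would be a whole sphere component of $S_0$ or $T_0$, contradicting our reduction. If $P_i$ is a disk sitting inside $S_0$, then $\partial P_i$ is built from arcs and circles of $S\cap T$ together possibly with pieces of $\partial S_0$; the fact that $P_i$ is a disk in $S_0$ directly exhibits each such circle of $S\cap T$ on $\partial P_i$ as bounding a disk, and each such arc as cobounding a disk with arcs of $\partial S_0$, contradicting essentiality in $S_0$. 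The symmetric argument rules out a disk $P_i\subset T_0$.

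Having established that no sphere or disk component appears in $S_0\oplus T_0$, the conclusion is immediate:
\[
\chi_-(S\oplus T)=\chi_-(S_0\oplus T_0)=-\chi(S_0\oplus T_0)=-\chi(S_0)-\chi(T_0)=\chi_-(S)+\chi_-(T).
\]
The main obstacle is the last combinatorial step, and in particular keeping track of how the pieces $P_i$ are bounded when $S\cap T$ contains arcs rather than only circles; once one observes that a disk or sphere simply has no essential arcs or circles, the contradiction drops out.
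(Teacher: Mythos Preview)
Your approach is exactly the paper's: reduce to $\chi(S)+\chi(T)=\chi(S\oplus T)$ and then rule out positive--Euler-characteristic components of $S\oplus T$. The paper's proof is a single sentence asserting both facts; you supply the details, and your preliminary stripping of sphere/disk components is a clean way to handle something the paper leaves implicit.

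There is one imprecision in the arc case. First, the formula $\chi(C)=\sum_i\chi(P_i)$ is only correct when all seams are circles; if $C$ contains $a$ arc-seams then $\chi(C)=\sum_i\chi(P_i)-a$. Your conclusion ``some $P_i$ is a disk'' survives, since $a\ge 0$. More seriously, once you have a disk piece $P_i\subset S_0$, the assertion that each intersection arc on $\partial P_i$ ``cobounds a disk with arcs of $\partial S_0$'' does not yet contradict essentiality: $\partial P_i$ may carry several arcs of $S\cap T$ interleaved with arcs of $\partial S_0$, and an arc is inessential only when it cobounds a disk with a \emph{single} subarc of $\partial S_0$. The standard fix is to choose the piece more carefully: inside the disk $C$, take an innermost circle seam (the disk it bounds is a single piece, so that circle is inessential in $S_0$ or $T_0$), or if there are only arc seams, take an outermost one (it cuts off a single piece whose boundary is exactly that arc together with one arc of $\partial S_0$ or $\partial T_0$). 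With this adjustment your argument goes through.
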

\begin{proof}
	This is because $\chi(S) + \chi(T) = \chi(S\oplus T )$ and $S\oplus T$ does not have components of positive Euler characteristic.
\end{proof}

\begin{lemma} \label{lem:dcs2}
	Let $S_1$ and $S_2$ be two properly embedded oriented surfaces in general position such that every component of $S_1\cap S_2$ is essential in $S_1$ and $S_2$. If $S_1\oplus S_2$ is a norm-minimizing surface, then $S_1$ and $S_2$ have the minimal $\chi_-$ among all surfaces representing the same element in the second relative homology group $H_2(M,\partial M)$ respectively.
	\end{lemma}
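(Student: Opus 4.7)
The plan is to chain together Lemma \ref{lem:dcs} with the triangle inequality for the Thurston norm and the trivial bound $\chi_-(S_i) \ge x([S_i])$ that holds for every representative. First, I would record that $[S_1 \oplus S_2] = [S_1] + [S_2]$ in $H_2(M,\partial M)$, which follows because cut-and-paste along the intersection curves is a cobordism between $S_1 \cup S_2$ and $S_1 \oplus S_2$ in $M$.

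Next, the assumption that every component of $S_1 \cap S_2$ is essential in both surfaces puts us in the hypothesis of Lemma \ref{lem:dcs}, so
\[
\chi_-(S_1) + \chi_-(S_2) \;=\; \chi_-(S_1 \oplus S_2).
\]
Since $S_1 \oplus S_2$ is norm-minimizing by hypothesis, the right-hand side equals $x([S_1] + [S_2])$. Applying the triangle inequality for the Thurston norm and the definition of $x$ gives
\[
x([S_1]) + x([S_2]) \;\le\; \chi_-(S_1) + \chi_-(S_2) \;=\; x([S_1] + [S_2]) \;\le\; x([S_1]) + x([S_2]).
\]
Hence the whole chain consists of equalities, and since each summand $\chi_-(S_i) \ge x([S_i])$ individually, the equality $\chi_-(S_1) + \chi_-(S_2) = x([S_1]) + x([S_2])$ forces $\chi_-(S_i) = x([S_i])$ for $i = 1, 2$. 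That is the desired conclusion.

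There is not really a hard step here; the only point that requires a moment of care is verifying that the homology class of $S_1 \oplus S_2$ equals $[S_1] + [S_2]$, which I would justify by noting that the cut-and-paste operation can be realized by pushing $S_1 \cup S_2$ across a 2-chain supported in a regular neighborhood of $S_1 \cap S_2$. Everything else is a direct packaging of Lemma \ref{lem:dcs} with the subadditivity of $x$.
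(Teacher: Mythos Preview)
Your argument is correct and is exactly the approach the paper takes: it too cites the equality $\chi_-(S_1)+\chi_-(S_2)=\chi_-(S_1\oplus S_2)$ from Lemma~\ref{lem:dcs} together with the triangle inequality for the Thurston norm, just stated more tersely. Your write-up simply makes explicit the chain of (in)equalities and the final separation into the two summands.
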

\begin{proof}
	This is because of the equality $\chi_-(S_1) + \chi_-(S_2) = \chi_-(S_1\oplus S_2)$ and the triangle inequality for the Thurston norm.
\end{proof}

Gabai in \cite{Ga1} introduced the notion of a sutured manifold and a decomposition surface.

\begin{definition}

		A \emph{sutured manifold} $(M,\gamma)$ is a compact oriented
3-manifold $M$ with boundary together with a set $\gamma \subset
\partial M$ of pairwise disjoint annuli $A(\gamma)$ and tori
$T(\gamma).$ Furthermore, the interior of each component of
$A(\gamma)$ contains a \emph{suture}, i.e., a homologically
nontrivial oriented simple closed curve. We denote the union of the
sutures by $s(\gamma)$ or $s(M).$ We say that an annulus component of $A(\gamma)$ is a \emph{sutured annulus} and a torus component of $T(\gamma)$ is a \emph{sutured torus}.

Finally every component of $R(\gamma)=\partial M \backslash
int(\gamma)$ is oriented. Define $R_+(\gamma)$ (or
$R_-(\gamma)$) to be those components of $\partial M \setminus
int(\gamma)$ whose normal vectors point out of (into) $M$.
The orientation on $R(\gamma)$ must be coherent with respect to
$s(\gamma),$ i.e., if $\delta$ is a component of $\partial
R(\gamma)$ and is given the boundary orientation, then $\delta$ must
represent the same homology class in $H_1(\gamma)$ as some suture.

\end{definition}

\begin{definition} \label{def:decompositionsurface}
	Let $(M,\gamma)$ be a sutured manifold, and $S$ a properly embedded surface in $M$ such that for every component $\lambda$ of $S\cap \gamma$ one of (1)-(3) holds:
	\begin{enumerate}
		\item $\lambda$ is a properly embedded nonseparating arc in an annular component of $\gamma$,
		\item $\lambda$ is a simple closed curve in an annular component $A$ of $\gamma$ in the same homology class as $A \cap s(\gamma)$,
		\item $\lambda$ is a homotopically nontrivial curve in a toral component $T$ of $\gamma$, and if $\delta$ is another component of $T\cap S$, then $\lambda$ and $\delta$ represent the same homology class in $H_1(T)$.
	\end{enumerate}
	Furthermore, no component of $\partial S$ bounds a disc in $R(\gamma)$ and no component of $S$ is a disc with $\partial D \subset R_+\cup R_-$.
	Then we call $S$ a \emph{decomposition surface} for $(M,\gamma)$. The orientation of $S$ induces orientations on $S_\pm \subset \partial \eta(S)$. We can construct a suture structure on $M' = \overline{M \backslash \eta(S)}$:
\begin{eqnarray*}
	\gamma' &=& (\gamma \cap M') \cup \overline{\eta(S'_+\cap R_-)} \cup \overline{\eta(S'_-\cap R_+)}\\
	R'_+ &=& ((R_+\cap M')\cup S'_+) \backslash \textnormal{int} \gamma'\\
	R'_+ &=& ((R_-\cap M')\cup S'_-) \backslash \textnormal{int} \gamma'
\end{eqnarray*} 
Here $S'_+$ (resp. $S_-'$) is the union of the components of $\partial\eta(S)$ whose normal vector points out of (resp. into) $M'$.

We say that $(M,R_+,R_-,\gamma) \stackrel{S} {\leadsto} (M',R'_+,R'_-,\gamma')$ is a \textit{sutured manifold decomposition}, and $(M,\gamma) \stackrel{S} {\leadsto} (M',\gamma')$ for short.

\end{definition}

We define a taut sutured manifold closely related to taut surfaces.

\begin{definition} \label{def:tautmanifold}
	A sutured manifold $(M,\gamma)$ is \textit{taut} if $M$ is irreducible and $R_+$ and $R_-$ are both taut surfaces in $M$. 
	\end{definition}

\begin{lemma}[{\cite[Lemma 0.4]{Ga2}}]
	Suppose $(M, \gamma)$ is a sutured manifold and $(M,\gamma) \stackrel{S} {\leadsto} (M',\gamma')$ is a sutured manifold decomposition. If $(M',\gamma')$ is taut, then so is $(M,\gamma)$.
\end{lemma}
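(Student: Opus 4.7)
The plan is to verify the three defining conditions for $(M,\gamma)$ to be taut as listed in Definition~\ref{def:tautmanifold}: irreducibility of $M$, tautness of $R_+(\gamma)$, and tautness of $R_-(\gamma)$. The first condition is handled by a standard innermost-disk argument; the two surface-tautness conditions both reduce to the tautness of $R'_\pm$ in $M'$ via the double curve sum machinery of Lemma~\ref{lem:dcs}.

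For irreducibility, I would take an embedded $2$-sphere $\Sigma \subset M$, put it in general position with $S$, and minimize $|\Sigma \cap S|$ within its isotopy class. If $\Sigma \cap S = \emptyset$, then $\Sigma$ sits inside $M'$, which is irreducible, and bounds a ball. Otherwise an innermost circle $c \subset \Sigma \cap S$ bounds a disk $D \subset \Sigma$; after decomposition $D$ becomes a compressing disk for one of $S'_\pm \subset R'_\pm$ in $M'$, so the tautness (and hence incompressibility) of $R'_\pm$ forces $c$ to bound a disk on $S$ as well. Isotoping $\Sigma$ across the resulting sphere component lowers $|\Sigma \cap S|$, contradicting minimality.

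For tautness of $R_+$ (the argument for $R_-$ being identical), I would take any properly embedded competitor $T$ with $[T,\partial T] = [R_+,\partial R_+]$ in $H_2(M, \eta(\partial R_+))$, put $T$ in general position with $S$, and use iterated innermost-disk cut-and-paste --- legitimized by the irreducibility of $M$ just established and by the incompressibility of $S'_\pm$ in $M'$ --- to reduce to the case in which every component of $T \cap S$ is essential in both surfaces. Forming the double curve sum $T \oplus S$, Lemma~\ref{lem:dcs} gives $\chi_-(T \oplus S) = \chi_-(T) + \chi_-(S)$. Cutting along $S$ produces a properly embedded surface $T^{\ast} \subset M'$, and the homology class $[T^{\ast}]$ equals $[R'_+]$ in $H_2(M', \eta(\partial R'_+))$, because $R'_+$ arises from $R_+ \cup S$ through the same construction and $[T \oplus S] = [R_+ \cup S]$ rel boundary in $M$. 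Tautness of $R'_+$ then yields $\chi_-(T^{\ast}) \geq \chi_-(R'_+) = \chi_-(R_+) + \chi_-(S)$, and combining with the double curve sum identity gives $\chi_-(T) \geq \chi_-(R_+)$. The same reasoning applied to subcollections of components rules out null-homologous subcollections in $R_+$.

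The main obstacle is the homological identification $[T^{\ast}] = [R'_+]$: one must keep track of which side of $\partial\eta(S)$ the two pieces $S'_+$ and $S'_-$ correspond to, and verify that the orientations inherited by $T \oplus S$ from $T$ and $S$ assign to $T^{\ast}$ the same relative class as $R'_+$ under the decomposition conventions of Definition~\ref{def:decompositionsurface}. Once this orientation bookkeeping is in place, everything else is a direct application of Lemma~\ref{lem:dcs} and the Thurston-norm triangle inequality.
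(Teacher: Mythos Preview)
The paper does not give its own proof of this lemma: it is stated with a citation to Gabai \cite[Lemma~0.4]{Ga2} and no proof environment follows. So there is nothing in the paper to compare your argument against line by line.

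That said, your outline is exactly the standard argument going back to Gabai. Two small points are worth tightening. First, in the irreducibility step, the innermost disk $D$ has boundary on $S'_\pm \subset R'_\pm$, and incompressibility of $R'_\pm$ only tells you that $c$ bounds a disk in $R'_\pm$, not necessarily in $S'_\pm$; you then need the irreducibility of $M'$ to cap off the resulting sphere and perform the isotopy. Second, tautness of $R_+$ in the paper's Definition of a taut surface includes incompressibility as a separate clause, so you should say a word about it (a compressing disk for $R_+$ can be pushed off $S$ by the same innermost-disk argument and becomes a compressing disk for $R'_+$). With those details filled in, your proposal is the canonical proof, and the homological bookkeeping you flag as the main obstacle is precisely where Gabai's original argument does its work.
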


\begin{definition}
	Let $(M,\gamma)$ be a sutured manifold. A \emph{product annulus} is an annulus $A$ in $M$ which does not cobound a solid cylinder in $M$ and such that one boundary component of $A$ lies on $R_-$ and the other one lies on $R_+$. A \emph{product disk} is a disk $D$ in $M$ such that $|D \cap s(\gamma)|= 2$.
\end{definition}

\begin{definition}
	Let $S \subset (M, \gamma)$ be a decomposition surface such that each component $J$ of $S$ is a product disk or a product annulus. Then we call $S$ a \emph{product decomposition surface}.
\end{definition}

Product disks and product annuli have a nice property of preserving the tautness.

\begin{lemma}[{\cite[Lemma 3.12]{Ga1}}]
	Let $(M, \gamma) \stackrel{J}{\leadsto}(M', \gamma')$ be a decomposition such that either $J$ is a product disc or a product annulus. Then $(M,\gamma)$ is taut if and only if $(M', \gamma')$ is taut.
\end{lemma}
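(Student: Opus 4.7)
The ``if'' direction is an immediate consequence of the preceding lemma, which applies to arbitrary sutured decompositions. My plan concentrates on the ``only if'' direction: assuming $(M,\gamma)$ is taut and $J$ is a product disc or product annulus, I need to verify the three conditions defining tautness of $(M',\gamma')$, namely irreducibility of $M'$ together with the incompressibility and norm-minimality of each of $R'_{\pm}$.

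For irreducibility I would first observe that $J$ is incompressible in $M$: a disc is trivially incompressible, and a product annulus by definition does not cobound a solid cylinder in the irreducible manifold $M$, so a hypothetical compressing disc would lead to such a cylinder via irreducibility. Then any $2$-sphere $S \subset M' \subset M$ bounds a ball $B$ by irreducibility of $M$, and an innermost-disc argument on $B \cap J$, combined with incompressibility of $J$, lets me replace $B$ by a ball disjoint from $J$, so $S$ bounds a ball in $M'$ as well.

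For the taut property of $R'_{\pm}$ the decisive observation is that $\chi_-(R'_{\pm}) = \chi_-(R_{\pm})$: for a product disc the contributions from cutting $R_{\pm}$ along the arc $J \cap R_{\pm}$ and from attaching the face $S'_{\pm}$ cancel, while for a product annulus both the cut circle and the attached face have Euler characteristic zero. Given any surface $T$ in $M'$ representing $[R'_+]$, I would put $T$ in general position with $\partial \eta(J)$, prune trivial intersection curves using incompressibility of the pieces of $R_{\pm}$, and glue across $J$ to obtain a surface $T^{\#}$ in $M$ representing $[R_+]$ with $\chi_-(T^{\#}) \leq \chi_-(T)$. Tautness of $R_+$ in $M$ then forces $\chi_-(T) \geq \chi_-(T^{\#}) \geq \chi_-(R_+) = \chi_-(R'_+)$, establishing norm-minimality of $R'_+$; the same argument works for $R'_-$. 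Incompressibility of $R'_{\pm}$ and the absence of null-homologous components are handled by entirely analogous glue-back arguments applied to a hypothetical compressing disc, or to a null-homologous subsurface, respectively.

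The main technical obstacle is ensuring the glue-back operation actually yields an embedded surface in the correct relative homology class: one must position $T$ so that $T \cap S'_{\pm}$ consists of arcs and circles that match up coherently across $J$, and then check the Euler characteristic bookkeeping and the class in $H_2(M, \partial M)$. For a product annulus the reduction rests on cancelling inessential circles in $T \cap S'_{\pm}$ using incompressibility of $R_{\pm}$; for a product disc one must also keep track of the way the boundary arcs of $T$ interact with the new components of $\gamma'$ created along the decomposition, and verify that the reassembled $T^{\#}$ continues to meet $\gamma$ in the coherent way required of a decomposition surface.
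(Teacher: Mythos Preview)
The paper does not give its own proof of this lemma: it is simply quoted as \cite[Lemma~3.12]{Ga1} and used as a black box. So there is nothing in the paper to compare your argument against.

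Your outline is essentially the standard argument and is on the right track; the glue-back strategy you describe is how Gabai's original proof proceeds. One point deserves more care than you give it. The claim $\chi_-(R'_{\pm}) = \chi_-(R_{\pm})$ is correct at the level of Euler characteristic, but norm-minimality of $R'_+$ is a statement in $H_2(M',\eta(\partial R'_+))$, not in $H_2(M,\eta(\partial R_+))$, and these are genuinely different relative homology groups because both the ambient manifold and the boundary pattern change. Your competitor surface $T$ lives in $M'$ with $\partial T$ prescribed only up to $\eta(\partial R'_+)$, and after gluing across $J$ the resulting $T^{\#}$ must be shown to lie in the correct class in $H_2(M,\eta(\partial R_+))$; this requires controlling how $\partial T$ sits relative to the new sutures $\gamma'$ along $\partial \eta(J)$, which you flag as an obstacle but do not resolve. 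In Gabai's argument this is handled by first normalising $T$ near $\gamma'$ so that it meets the annuli $S'_{\pm}$ in a controlled pattern of essential arcs or circles, and only then performing the reassembly. Without that normalisation step the glue-back need not produce an embedded surface, let alone one in the right class.
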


Gabai \cite[Definition 4.11]{Ga1} associates to each connected sutured manifold $(M,\gamma)$ a complexity $c(M,\gamma)$ which is given by the 4-tuple $(c_1,c_2,c_3,c_4)$($c_i$ may be denoted $c_i(M,\gamma)$) where $c_i$ is a nonnegative integer, $c_2$ is a 6-tuple of nonnegative integers with the dictionary ordering, and $c _3$ and $c_4$ are finite, possibly empty, sets of positive integers. The set of complexities is given by the dictionary ordering.

\begin{definition} \label{reduced}
	Let $(M,\gamma)$ be a taut sutured manifold, and let $S$ be a maximal set of pairwise disjoint non-parallel product annuli and product disks in $M$.	
	Let $(\overline M,\overline \gamma)$ be the sutured manifold obtained from $(M,\gamma)$ by decomposing along $S$ and throwing away product sutured manifold components which we call \emph{windows}. Then $(\overline M,\overline \gamma)$ is called the \textit{guts} of $(M,\gamma)$ (which Gabai call the reduced sutured manifold of $(M,\gamma)$). The manifold $(\overline M,\overline \gamma)$ is well-defined up to isotopy by JSJ decomposition (\cite{jaco1979seifert},\cite[Chapter III]{johannson2006homotopy}). 
	
	Define the \textit{reduced complexity} $\overline C(M,\gamma)$ of the taut manifold $(M,\gamma)$ by $\overline C(M,\gamma) = c(\overline M,\overline \gamma)$.
\end{definition}

In this paper, we use reduced complexity rather than complexity of a sutured manifold and we briefly call reduced complexity as complexity.

Friedl and Kitayama \cite{FK} cited the following theorem in \cite[Section 4]{Ga1} which says the complexity of a taut sutured manifold will decrease or remain the same after decomposition.

\begin{theorem}\label{thm:complexityfunction}
Let $(M,\gamma)$ be a connected sutured manifold and let $(M,\gamma)\overset{S}{\rightsquigarrow} (M',\gamma')$ be a   sutured manifold decomposition along
 a    connected decomposition surface $S$.
Suppose that $(M,\gamma)$ and $(M',\gamma')$ are taut.  Let $(M'_0,\gamma'_0)$ be a component
of $(M',\gamma')$. Then
\[ \overline C(M'_0,\gamma'_0)\leq \overline C(M,\gamma).\]
Furthermore, if $S$ is not boundary parallel, e.g. if $[S]$ is non-trivial in $H_2(M,\partial M;\Z)$, then
\[ \overline C(M'_0,\gamma'_0)<\overline C(M,\gamma).\]
\end{theorem}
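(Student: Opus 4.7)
The plan is to deduce this statement from Gabai's original monotonicity theorem for the unreduced complexity $c(M,\gamma)$ under taut sutured decompositions, as developed in \cite[Section 4]{Ga1}, by interleaving the decomposition along $S$ with the product decompositions that define the guts. The goal is to choose a single family of product disks and annuli that simultaneously exhibits the guts on both sides, so that the reduced complexities can be compared by a decomposition taking place entirely inside the guts of $(M,\gamma)$.

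First I would fix a maximal family $\Pi'$ of pairwise disjoint, pairwise non-parallel product disks and product annuli in $(M',\gamma')$, so that decomposing $(M',\gamma')$ along $\Pi'$ splits it into the guts components (including $(\overline{M'_0},\overline{\gamma'_0})$) together with product windows. Viewing $\Pi'$ as a properly embedded surface in $M$ disjoint from $S$, the composite decomposition $(M,\gamma) \rightsquigarrow (N,\nu)$ along $S \cup \Pi'$ realizes $(\overline{M'_0},\overline{\gamma'_0})$ as one of the components of $(N,\nu)$. I would then extend $\Pi'$ to a maximal collection $\Pi \supseteq \Pi'$ of pairwise non-parallel product disks and annuli in all of $(M,\gamma)$, after first isotoping $S$ into a position that minimizes $|S \cap \Pi|$ and has only essential intersections. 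Decomposing $(M,\gamma)$ first along $\Pi$ produces $(\overline{M},\overline{\gamma})$ together with windows; the portion of $S$ surviving in $\overline{M}$ then gives a decomposition of $(\overline{M},\overline{\gamma})$ which, after further product reductions, recovers $(\overline{M'_0},\overline{\gamma'_0})$. Applying Gabai's unreduced monotonicity to this decomposition of $(\overline{M},\overline{\gamma})$ yields
$$\overline{C}(M'_0,\gamma'_0) = c(\overline{M'_0},\overline{\gamma'_0}) \leq c(\overline{M},\overline{\gamma}) = \overline{C}(M,\gamma).$$

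For the strict inequality I would assume that $S$ is not boundary parallel, so in particular $S$ is not a product disk or a product annulus, and therefore its surviving image in $(\overline{M},\overline{\gamma})$ is a non-boundary-parallel decomposition surface; Gabai's strict monotonicity then applies to give the strict inequality. The chief obstacle is verifying that the two maximal product families on either side of the decomposition along $S$ can indeed be chosen compatibly. This requires an innermost-disk and innermost-arc analysis of $S \cap \Pi$, together with the JSJ-type uniqueness (up to isotopy) of the guts decomposition cited in Definition \ref{reduced}, to argue that intersections of $S$ with window pieces can be removed or absorbed without altering the guts on either side. Once this compatibility is established, the monotonicity of the reduced complexity reduces directly to the monotonicity of Gabai's original complexity.
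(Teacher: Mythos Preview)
The paper does not actually prove this theorem; it is quoted as a known result from \cite[Section~4]{Ga1}, as cited through Friedl--Kitayama \cite{FK}. So there is no in-paper argument to compare against: the authors treat Gabai's complexity machinery as a black box.

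Your strategy --- reduce $\overline C$-monotonicity to Gabai's $c$-monotonicity by choosing compatible product decompositions on both sides of $S$ --- is the natural one, but the sketch has a genuine gap precisely at the step you flag. A product disk or product annulus $D\in\Pi'$ for $(M',\gamma')$ may meet the \emph{new} sutures created along $\partial S$; viewed back in $(M,\gamma)$, such a $D$ has boundary running over $R(\gamma)$ and $S_\pm$ rather than across $s(\gamma)$, so it is simply not a product disk or product annulus for $(M,\gamma)$. Hence the inclusion $\Pi'\subseteq\Pi$ cannot be arranged as stated, and an innermost-arc argument does not obviously repair this: you must instead argue that each such $D$ either is absorbed into a window on the $M$-side or corresponds to a genuine piece of $S$ landing in the guts $(\overline M,\overline\gamma)$. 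That is the actual content of the theorem, and your outline does not supply it.

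There is a second gap for the strict inequality. Even when $S$ is not boundary parallel in $M$, the portion of $S$ surviving in $(\overline M,\overline\gamma)$ after stripping off the windows could a priori be boundary parallel there (or fail to be connected, or be a product surface). Gabai's strict inequality requires the decomposing surface in $(\overline M,\overline\gamma)$ itself to be non--boundary-parallel, and you have not checked that this property descends; the hypothesis $[S]\neq 0$ in $H_2(M,\partial M)$ has to be turned into the corresponding statement inside the guts.
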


Among norm-minimizing and taut surfaces, we are more interested in classes with nice properties called properly norm-minimizing and properly taut. This is equivalent to 	(2) and (3) in Definition \ref{def:decompositionsurface} for decomposition surfaces.

\begin{definition} (Cf. \cite{miller2019effect}) 
	 Let $M$ be a compact 3-manifold whose boundary is a collection of tori. Let $S$ be an oriented surface in $M$. We say that $S$ is \emph{properly norm-minimizing} if $S$ is norm-minimizing and on each boundary component $P$ of $M$, all components of $\partial S\cap P$ have the same orientation (parallel on $P $). 
\end{definition}

\begin{definition}
Let $(M,R_+,R_-,\gamma)$ be a sutured manifold. We say that $(S,\partial S) \subset (M,\gamma)$ is \emph{properly taut} if $S$ is taut and on each component $P$ of $\gamma$, all components of $\partial S\cap P$ have the same orientation, and moreover, the orientation is the orientation of the core of $P$ if $P$ is an annulus.
\end{definition}

\begin{remark}
	When $M$ is a manifold with toral boundary, we think of $\gamma$ as $\partial M$ and $R_\pm$ is empty.
\end{remark}

\section{Guts Components as Sutured Manifolds}\label{sec:guts:guts}

In the following, we denote $M$ as a compact, orientable, connected, irreducible 3-manifold with toral boundary. Furthermore, we require $x_M$ to be non-degenerate. Consider $(M,\partial M)$ as a sutured manifold with $\gamma = \partial M$. 

\begin{definition}

	Suppose $z$ is a primitive element in $H_2(M,\partial M;\Z)$. Take a maximal collection of disjoint, non-parallel, 
	properly norm-minimizing decomposition surfaces $\Sigma_1,\ldots, \Sigma_k$ in $M$ such that 
$$
[\Sigma_i] = z \textnormal{ in }H_2(M,\partial M;\Z),  \ \forall i = 1,\ldots,k .
$$
We call the union of $\Sigma_1,\ldots, \Sigma_k$ a \emph{facet surface} for $z$ and denote $\Sigma_1\cup\ldots \cup\Sigma_k$ as $F(z)$. 

\end{definition}

\begin{definition}\label{def:guts}

Let $S$ be a properly norm-minimizing surface in $M$ and consider $M\spl S$ as a sutured manifold. We define the \emph{guts} of $S$ as the guts of $M\spl S$ in Definition \ref{reduced}, denoted by $\Gamma(M,S)$.

\end{definition}

\begin{definition}

	\emph{Equivalence of guts.} Two guts $G_1$ and $G_2$ are equivalent if there exists an isotopy of $M$ which restricts to a homeomorphism $G_1 \to G_2$ as sutured manifolds.

\end{definition}

We are going to show some properties of guts.

\begin{definition}
	A properly embedded surface $S$ in a sutured manifold $ (M,\gamma)$ is called a horizontal surface if
i) $\partial S \subset \gamma$ and $\partial S$ is isotopic to $\partial R_+(\gamma)$,
ii) $S$ is taut,
iii) $[S] = [R_+(\gamma)]$ in $H_2(M, \gamma)$.
\end{definition}

\begin{definition}[{\cite[Definition 9.3]{juhasz2008floer}}]
	A sutured manifold is called \emph{reduced} if it doesn't contain any essential product annulus or product disk, and \emph{horizontally prime} if it doesn't contain any horizontal surface which is not parallel to boundary.
	
\end{definition}

\begin{lemma} \label{lem:reducedhorizontallyprime}
For a facet surface $F$, each component of $\Gamma(M,F)$ is reduced and horizontally prime.

\end{lemma}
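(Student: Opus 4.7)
The plan is to prove both assertions by contradiction, each time exploiting the maximality built into one of the two defining constructions.

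For the reduced claim, suppose a component $G$ of $\Gamma(M,F)$ contains an essential product annulus or product disk $A$. Since $\Gamma(M,F)$ is produced from the sutured manifold $(M\spl F,\gamma)$ by decomposing along a maximal collection $S$ of pairwise disjoint non-parallel product annuli and disks and then discarding product components (windows), I would push $A$ back through the product decompositions to produce a product annulus or disk $A'$ in $(M\spl F,\gamma)$. Essentialness of $A$ inside $G$ would force $A'$ to be non-parallel to every element of $S$ (otherwise the parallelism would live in a discarded window and would give a compression of $A$ in $G$), contradicting maximality. Some care is needed when $\partial A$ meets a piece of $R_\pm(G)$ that arose as a side of an element of $S$; in that case $A'$ must be assembled by gluing $A$ together with appropriate strips of those elements of $S$.

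For the horizontally prime claim, suppose $G$ contains a horizontal surface $H$ not parallel to $\partial G$. Re-attaching the windows would promote $H$ to a taut surface $\tilde H$ in the component $G'$ of $M\spl F$ containing $G$, with $[\tilde H]=[R_+(G')]$ in $H_2(G',\gamma_{G'})$. Since $R_+(G')$ is precisely the union of those components of $F$ bounding $G'$ on the $+$ side, and each component of $F$ represents $z$, the surface $\tilde H$ is a properly norm-minimizing surface in $M$ that is disjoint from $F$ and represents $mz$ in $H_2(M,\partial M;\Z)$ for some $m\geq 1$, with $\chi_-(\tilde H)=mx(z)=x(mz)$ by tautness. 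The remaining step is to produce from $\tilde H$ a properly norm-minimizing representative of the primitive class $z$ that is disjoint from $F$ and non-parallel to every $\Sigma_i$; this would contradict the maximality of $F$ as a facet surface. When $m=1$ this is immediate, and non-parallelism follows because any parallelism between $\tilde H$ and some $\Sigma_i$ would restrict to a parallelism inside $G$, forcing $H$ to be $\partial G$-parallel, contrary to hypothesis. For $m>1$, the plan is to use the horizontal position of $\tilde H$ in $G'$, together with the primitivity of $z$, to parallel-split $\tilde H$ into $m$ disjoint taut copies each of class $z$, and then select one disjoint from $F$ and non-parallel to any $\Sigma_i$.

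The main obstacle will be the $m>1$ case of the horizontally prime argument: extracting a single representative of the primitive class $z$ from a taut surface of class $mz$ in a way compatible with $F$. I expect to handle this via the double curve sum techniques codified in Lemma~\ref{lem:dcs} and Lemma~\ref{lem:dcs2}, together with the standard fact that a taut surface whose class is a positive integer multiple of a primitive class can, after double curve sums with a norm-minimizing representative of the primitive class, be replaced by parallel copies of that representative.
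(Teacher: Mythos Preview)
Your plan for the reduced claim is correct but heavier than needed: the paper dispatches it in one sentence, since the guts is defined by decomposing along a \emph{maximal} product decomposition surface and the JSJ uniqueness already invoked in Definition~\ref{reduced} guarantees no essential product annulus or disk can remain. Pushing $A$ back through the decompositions and worrying about where $\partial A$ lands is unnecessary.

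For horizontal primality your idea is essentially the paper's, but the paper's execution is cleaner and avoids a subtle circularity in yours. Rather than ``re-attaching the windows'' to promote $H$ to a surface $\tilde H$ with $[\tilde H]=[R_+(G')]$---which, as stated, presumes $G$ is the only guts component of $G'$, i.e.\ Lemma~\ref{lem:atmostonegut}, proved only \emph{after} this lemma---the paper observes directly that $R_+(G)$ is a subsurface of $F$, selects a union $F'$ of components of $F$ containing $R_+(G)$, and sets $F''=(F'\setminus R_+(G))\cup\Sigma$. This is a cut-and-paste on $F$ itself; no windows, no other guts components, no extension needed. Your $\tilde H$ can be made to work by also throwing in $R_+$ of any other guts components of $G'$, but at that point you have rediscovered the paper's construction.

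You are right to flag the $m>1$ issue, and in fact the paper glosses over it too: it simply asserts that $F''$ ``violates the maximality of $F$'' without checking that the non-parallel component of $F''$ represents $z$ rather than a proper multiple. Your proposed remedy, however, is not convincing: a connected taut surface of class $mz$ need not split into $m$ parallel copies of a class-$z$ surface, and the ``standard fact'' you invoke about double curve sums is not standard. The paper's swap keeps most components of $F''$ equal to components of $F$ (hence of class $z$), which makes the difficulty milder, but in principle the swap can merge components, so both arguments are a bit loose here.
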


\begin{proof}
First, the reason that each component $(G,\gamma)$ of $\Gamma(M,F)$ is reduced is because we decompose along a maximal collection of product decomposition surfaces. Next, suppose that there exists a taut surface $\Sigma$ in $G$ which is not parallel to the boundary of $G$ but homologous to $R_+$ and $R_-$. Take a union $F'$ of some components of $F$ such that $F'$ contains the $R_+$ part of $(G,\gamma)$ and represents a multiple of $[F]$ in $H_2(M,\partial M)$. We cut $R_+$ out of $F'$ and glue with $\Sigma$ to have a new surface $F''$. Then $F''$ is homologous to a multiple of $F$, disjoint with $F$ and is non-parallel to $F$ which violates the maximality of $F$. 

\end{proof}

\begin{lemma}\label{lem:atmostonegut}  Let $F$ be a facet surface in $M$. Then every connected component of $M \spl F$ contains at most one guts component.
\end{lemma}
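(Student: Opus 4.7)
The plan is to argue by contradiction. Suppose some connected component $N$ of $M \spl F$ contains two distinct guts components $G_1$ and $G_2$ in its decomposition along a maximal family $\Pi$ of pairwise disjoint, non-parallel product annuli and disks. I will construct a properly norm-minimizing surface $\Sigma \subset N$ which is disjoint from $F$, represents $z$ in $H_2(M,\partial M;\Z)$, and is non-parallel to every component of $F$. Such a surface violates the maximality of $F$ as a facet surface.

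Since $N$ is connected, the dual graph of the decomposition $N = G_1 \cup G_2 \cup W_1 \cup \ldots \cup W_q$, with each $W_j$ a product $I$-bundle, is connected, so there is a path from $G_1$ to $G_2$. I would pick a window $W$ on this path and consider its horizontal cross-section $\Sigma_W \times \{1/2\}$, whose boundary lies on the suture cores of $\gamma(W)$: some circles sit on $\gamma(N) \subset \partial M$, while others lie on newly created sutures from the $\Pi$-decomposition. Across product annuli shared with other windows on the path, I would patch with their horizontal cross-sections, which agree along the core circle (by symmetry of the sutured-structure formulas of Definition \ref{def:decompositionsurface}). For product annuli $A$ abutting a guts component, I would cap off by sliding along a half-annulus in $A$ out to $\alpha_+ = A \cap R_+(N) \subset F$, then continue along a sub-surface of $R_+(N)$ whose boundary on $\partial M$ matches $\partial F$ suitably, finally pushing the entire cap slightly off $F$ into the interior of $N$. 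This produces a properly embedded surface $H \subset N$ with $\partial H \subset \partial M$, disjoint from $F$.

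The surface $H$ is homologous in $H_2(M,\partial M;\Z)$ to its pushed cap, which is a union of some components of $F$, representing $rz$ for some $r \geq 1$. By choosing $W$ adjacent to a single component of $R_+(N)$ (which I can ensure by selecting the cross-section close to $\partial G_1$), one arranges $r = 1$, so $[H] = z$. Applying Lemma \ref{lem:dcs2} to a double curve sum of $H$ with $F$ together with the norm-minimality of $F$ and the triangle inequality, $H$ (or a suitable norm-minimizing replacement in its isotopy class) is properly norm-minimizing of class $z$. Since the cross-section sits strictly between $G_1$ and $G_2$, any isotopy to $R_+(N)$ or $R_-(N)$ would have to pass through a guts component, so $H$ is not parallel to any $\Sigma_i$. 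This contradicts the maximality of $F$. The main obstacle is the careful boundary handling at product annuli abutting guts components and the extraction of a norm-minimizing representative of the primitive class $z$; the former requires tracking the sutured structure formulas for the decomposition, while the latter follows from tautness arguments and the primitivity of $z$.
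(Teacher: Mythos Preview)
Your overall strategy---contradict maximality of the facet surface $F$ by producing a new properly norm-minimizing surface disjoint from $F$ and non-parallel to every component of $F$---is exactly the paper's strategy. But your execution is considerably more involved than necessary, and the norm-minimality step contains a genuine gap.

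The paper's construction is one line: pick a single guts component $G\subset N$, take a union $F'$ of components of $F$ containing $R_+(N)$ and representing a multiple of $z$, then form $F''$ by excising $R_+(G)\subset F'$ and gluing in $R_-(G)$ (joined along the vertical annuli $\gamma(G)$, then pushed slightly into $N$). Since $\partial G$ is null-homologous in $H_2(M,\partial M)$ we get $[F'']=[F']$, and since $\chi(R_+(G))=\chi(R_-(G))$ for any sutured manifold (both equal $\tfrac12\chi(\partial G)$), we get $\chi_-(F'')=\chi_-(F')$; hence $F''$ is norm-minimizing. Non-parallelism is immediate: parallel to $R_+(N)$ would force $G$ to be a product, parallel to $R_-(N)$ would force $N\setminus G$ to be a product, and the second guts component $G_2$ rules out both.

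Your construction instead assembles $H$ from window cross-sections, half-annuli, and unspecified ``sub-surfaces of $R_+(N)$'', which obscures the Euler-characteristic bookkeeping. More seriously, your appeal to Lemma~\ref{lem:dcs2} does not establish norm-minimality: since $H$ is disjoint from $F$, the double curve sum $H\oplus F$ is just the disjoint union $H\cup F$, and Lemma~\ref{lem:dcs2} says that \emph{if} $H\cup F$ is norm-minimizing \emph{then} $H$ is---but you have no independent reason to know $H\cup F$ is norm-minimizing. The ``tautness arguments'' you allude to at the end are precisely what the paper's swap makes transparent: the equality $\chi(R_+(G))=\chi(R_-(G))$ is the entire content, and it holds automatically. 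Finally, your non-parallelism claim (``any isotopy \ldots\ would have to pass through a guts component'') is not an argument; isotopies may pass through anything. You need the product/non-product dichotomy above.

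In short: replace your window-patching by the direct $R_+(G)\leftrightarrow R_-(G)$ swap, and the whole proof becomes three sentences.
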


\begin{proof}
Suppose a component of $M \spl F$ contains at least 2 guts components. Denote the component as $N$ and consider it as a sutured manifold $(N, R_+(N),R_-(N), \gamma(N))$. Let $(G,R_+(G),R_-(G), \gamma(G))$ be a guts component in $(N, R_+(N),R_-(N), \gamma(N))$. Then $R_\pm(G)$ is a subset of $R_\pm(N)$. 

Take a union $F'$ of some components of $F$ such that $F'$ contains the $R_+(N)$ part of $N$ and represents a multiple of $[F]$ in $H_2(M,\partial M)$. We cut $R_+(G)$ out of $F'$ and glue with $R_-(G)$ to have a new surface $F''$. Then $F''$ is homologous to a multiple of $F$, disjoint with $F$ and is non-parallel to $F$ which violates the maximality of $F$. See Figure \ref{fig:2-guts}.
 
 \begin{figure}[hbt]
	
  \includegraphics[width=0.7\columnwidth]{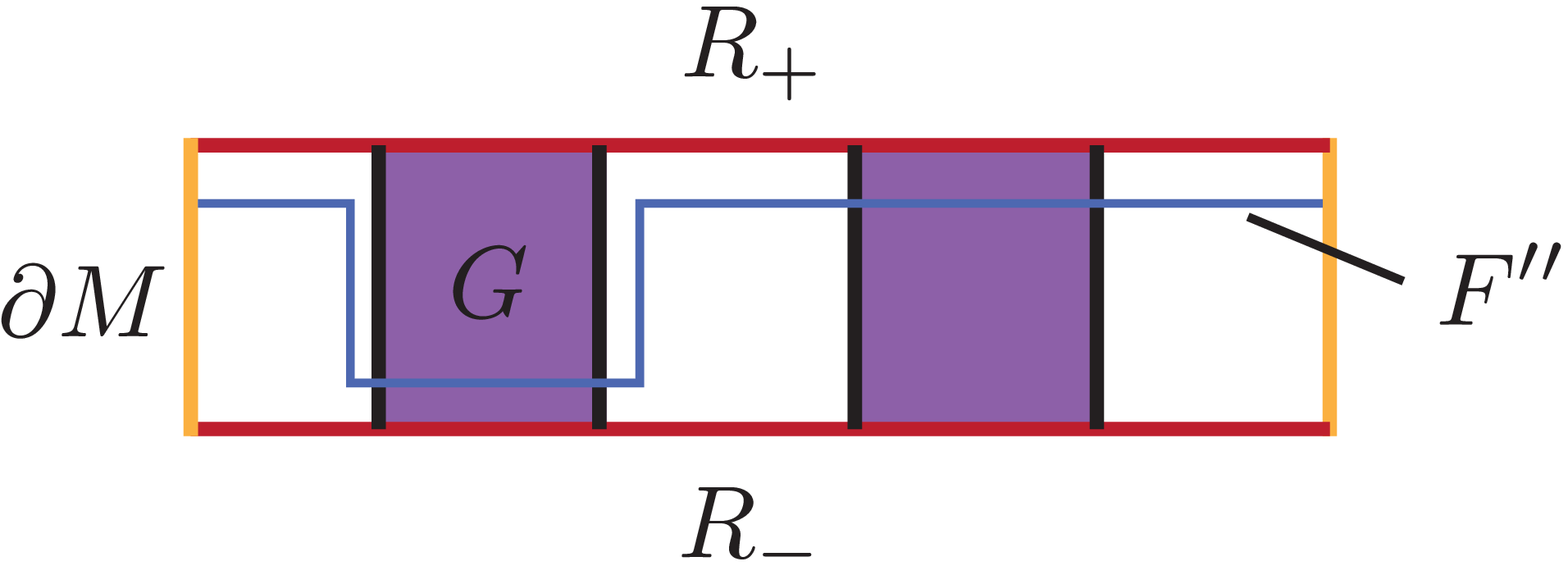}
    \caption{} \label{fig:2-guts}
\end{figure}
 
\end{proof}

For the guts associated to facet surfaces representing an element in $H_2(M,\partial M)$, we have the following theorem.

\begin{namedthm*}{Theorem \ref{thm:gutsforhomology}}
	Let $M$ be an irreducible, orientable $3$-manifold with boundary a disjoint union of tori $\sqcup_{i=1}^n P_i$ and non-degenerate Thurston norm. The guts $\Gamma(M,F)$ for facet surfaces representing a primitive element $z$ in $H_2(M,\partial M;\Z)$ do not depend on the selection of facet surfaces and product decomposition surfaces up to the equivalence of guts.
\end{namedthm*}

Before proving the theorem, we need some prerequisites.

\begin{definition} \label{def:non-product}
Let $(M,R_+,R_-)$ be a sutured manifold. Let $A$ be an essential annulus in $M$ with boundary on $R_+\cup R_-$. We say $A$ is a \emph{non-product annulus} if it is neither a product annulus nor boundary-parallel to $R_+\cup R_-$.
\end{definition}
\begin{remark} \label{rmk:non-product}
	Let $S$ be a facet surface and $A$ be a non-product annulus in $M\spl S$. Let $N$ be the component of $M\spl S$ which contains $A$. We can do a surgery along $A$ for $S$ to have $S'$ as in Figure \ref{fig:non-product}.
	\begin{figure}[hbt]
	\begin{subfigure}[b]{.45\textwidth}%\centering
	\begin{center}
  \includegraphics[width=\columnwidth]{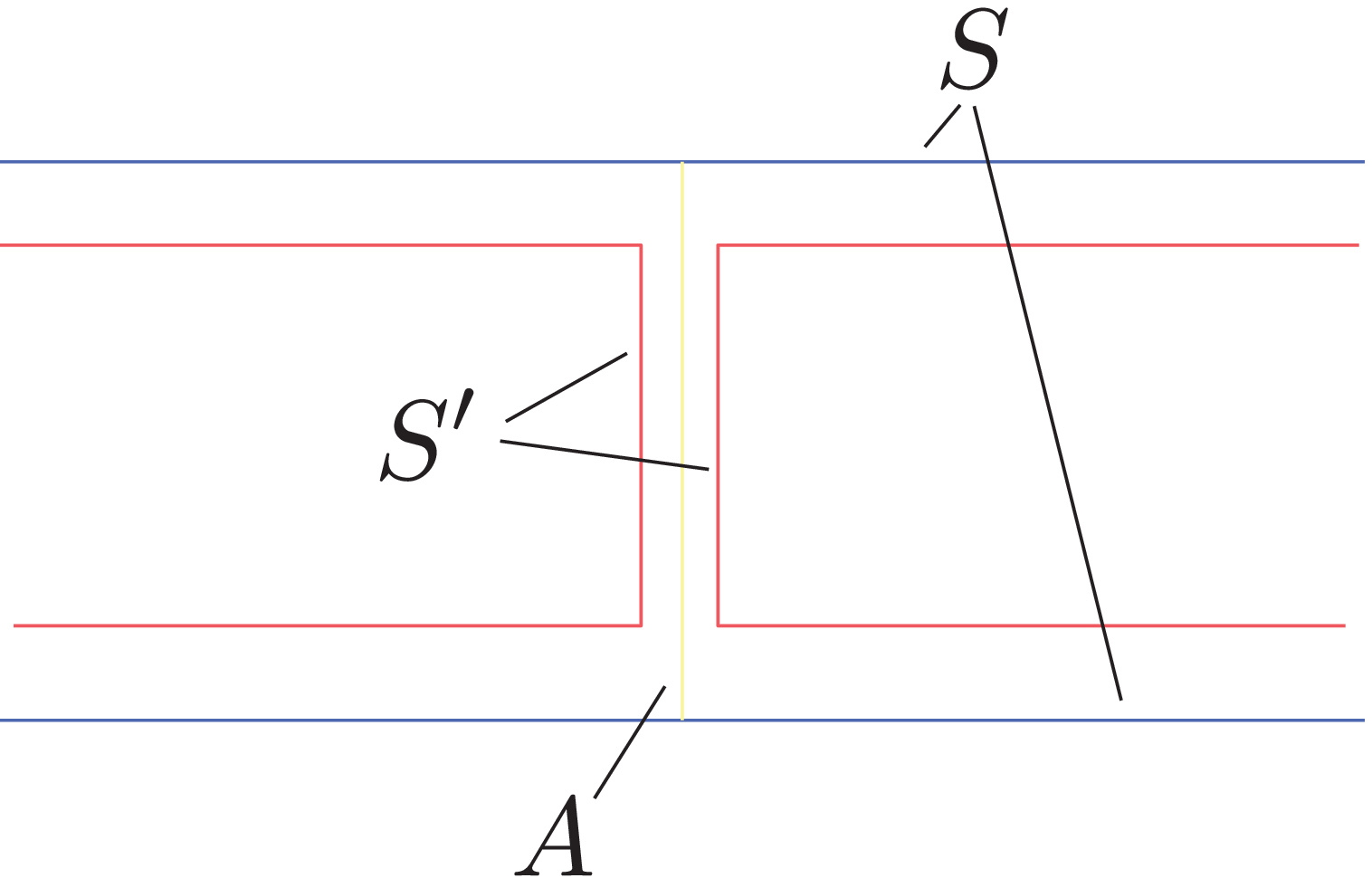}
  \caption{A schematic graph of a non-product annulus.}\label{fig:non-product}
\end{center}
  \end{subfigure}%
  \hfill
  \begin{subfigure}[b]{.45\textwidth}%\centering
  	\begin{center}
  \includegraphics[width=\columnwidth]{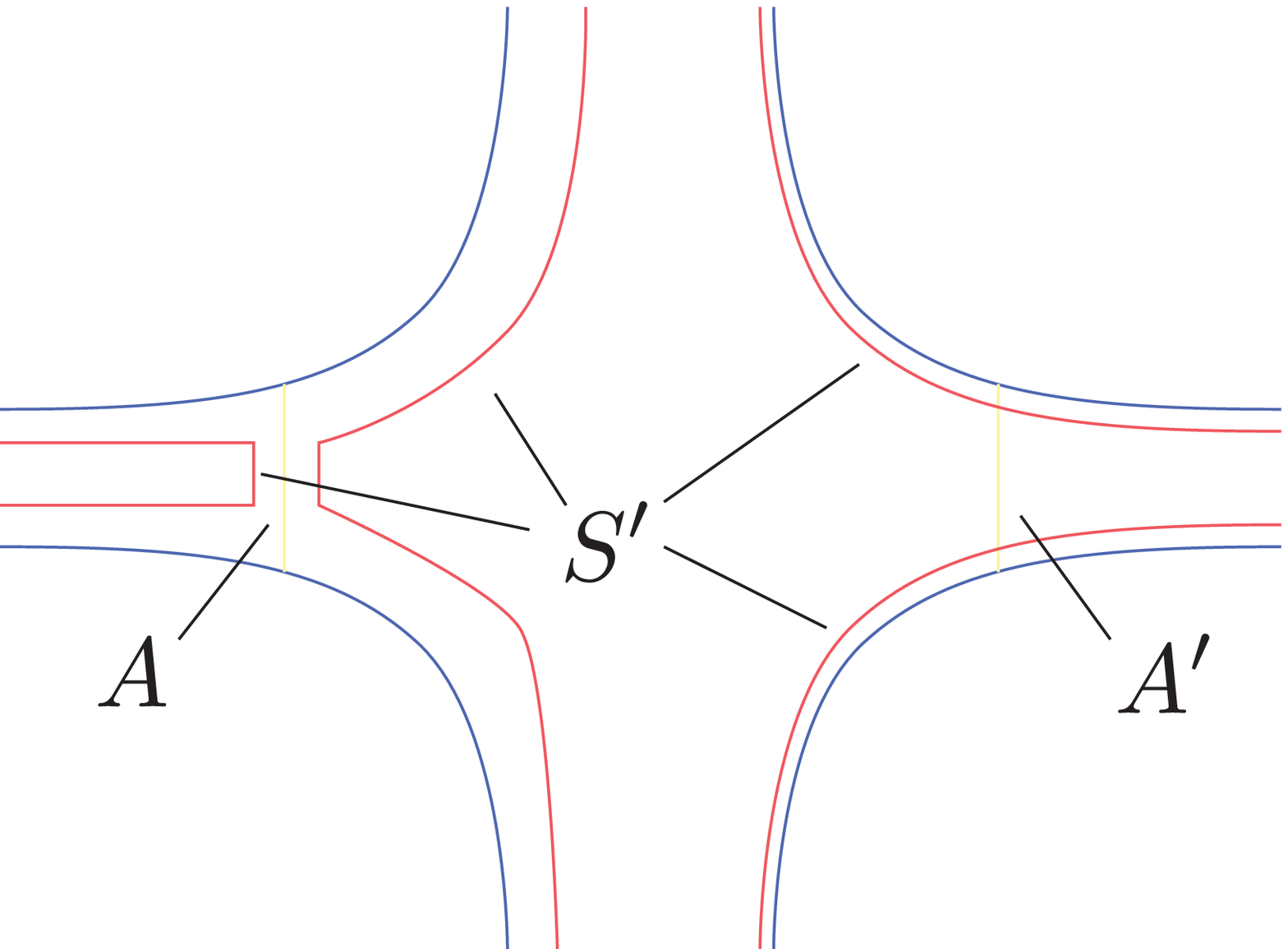}
  \caption{A schematic graph of 2 non-parallel non-product annuli}\label{fig:2-non-product}
  \end{center}
  \end{subfigure}%
  \caption{}
\end{figure}

	 There are two possibilities:
	\begin{enumerate}
		\item $S'$ has no null-homologous component. In this case, the guts of $N$ is a solid torus with 4 longitudinal sutures which we call a \emph{4-ST} in this paper,
		\item $S'$ has null-homologous components. In this case, $A$ is null-homologous rel boundary in $N$. Furthermore, the guts of $N$ is a sutured manifold with toral boundary, only two sutures on a boundary component and the other boundary components as sutured tori.
	\end{enumerate}
\end{remark}

\begin{lemma} \label{lem:2-non-product} Let $S$ be a facet surface. Then non-product annuli in a component $N$ of $M\spl S$ are parallel to each other up to orientation.
\end{lemma}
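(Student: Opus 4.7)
The plan is to argue by contradiction. Suppose $A_1, A_2 \subset N$ are non-product annuli that are not parallel even up to reversal of orientation. The strategy is to construct from each $A_i$ a new properly norm-minimizing surface $\Sigma_i^*$ representing the primitive class $z$, disjoint from the facet surface $F$ and non-parallel to any component of $F$; non-parallelism of $A_1, A_2$ will ensure $\Sigma_1^*$ and $\Sigma_2^*$ are themselves non-parallel, and their augmentation to $F$ will contradict maximality of the facet collection.

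First, I would reduce to the case $A_1 \cap A_2 = \emptyset$. Both $A_i$ are essential in $N$, which is irreducible with incompressible boundary (the latter because $F$ is taut). Any intersection curve must therefore be essential in both annuli, and oriented cut-and-paste along an innermost such curve strictly reduces the intersection count while preserving both the non-product property (any putative product region cobounded by the new annulus would pull back to one cobounded by the originals) and the non-parallelism. Iterating this procedure, we may assume $A_1$ and $A_2$ are disjoint.

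Next, I would construct $\Sigma_i^*$ by pushing $F$ slightly into $N$ to a parallel copy $F_\varepsilon$, then surgering $F_\varepsilon$ along $A_i$ following Remark \ref{rmk:non-product}. The result is a properly embedded surface in $M$ disjoint from $F$ and homologous to $F$; its components are either parallel to components of $F$ (which we discard) or incorporate the two parallel copies of $A_i$ introduced by the surgery. By the case analysis (1)/(2) of Remark \ref{rmk:non-product}, the non-discarded components are properly norm-minimizing, and by primitivity of $z$ we can extract from them a component $\Sigma_i^*$ representing exactly $z$. Because $A_i$ is non-product, $\Sigma_i^*$ is not parallel to any component of $F$.

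Finally, the disjointness of $A_1, A_2$ arranged in the first step permits $\Sigma_1^*$ and $\Sigma_2^*$ to be placed disjointly, and their non-parallelism as surfaces in $M$ will follow from the non-parallelism of $A_1, A_2$ (even up to orientation) in $N$. This produces a strictly larger collection of disjoint pairwise non-parallel properly norm-minimizing representatives of $z$, contradicting the maximality of $F$. The main obstacle will be this last step: translating non-parallelism of annuli in $N$ into non-parallelism of surfaces in $M$, and handling the case analysis of Remark \ref{rmk:non-product} when producing the components $\Sigma_i^*$ — particularly case (2), where the surgery introduces null-homologous components that must be systematically discarded without losing the component representing $z$.
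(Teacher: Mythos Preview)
Your proposal has a genuine gap at the crucial step. The claim that ``because $A_i$ is non-product, $\Sigma_i^*$ is not parallel to any component of $F$'' is false in general, and in fact the opposite holds: since $F$ is a facet surface, \emph{any} properly norm-minimizing representative of $z$ disjoint from $F$ must be parallel to a union of components of $F$. That is precisely what maximality of the facet collection asserts. So your surgered component $\Sigma_i^*$ is parallel to a component of $F$ (typically to $R_\mp$ when $\partial A_i\subset R_\pm$), and appending it to $F$ yields nothing new. A quick sanity check: if your claim were correct, even a single non-product annulus would already violate maximality of $F$ --- but non-product annuli certainly can occur in $M\spl F$, as Remark~\ref{rmk:non-product} explicitly analyses. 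You flag exactly this step as the ``main obstacle'', and indeed it does not go through.

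The paper's proof takes a different route. Via Remark~\ref{rmk:non-product}, each non-product annulus $A$ in $N$ determines a non-trivial guts component of $N$ (a 4-ST, or a sutured piece with toral boundary and two sutures) trapped between $R_\pm$ and the surface obtained by surgering along $A$. It then appeals to Lemma~\ref{lem:atmostonegut}, which says that each component of $M\spl F$ contains at most one guts component. Two non-parallel non-product annuli $A,A'$ would yield two such guts pieces in the same $N$, contradicting that lemma. The point you are missing is that the useful information carried by $A_i$ is not the surgered surface itself (which, by maximality, collapses back into $F$) but the non-product region sitting \emph{between} $R_\pm$ and that surgered surface.
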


\begin{proof}
	Suppose there are two non-product annuli $A$ and $A'$ which are non-parallel. See Figure \ref{fig:2-non-product}. We do a surgery along $A$ for $S$ and drop the (possibly) null-homologous components to have $S'$. Do the operation again along $A'$ for $S'$ to have $S''$. Then by the previous remark, between $S$ and $S'$ there is a guts component and so is between $S'$ and $S''$. This violates Lemma \ref{lem:atmostonegut}. 
\end{proof}

\begin{definition} \label{def:belly} Let $S$ and $T$ be two facet surface representing a primitive class $z$ in $H_2(M,\partial M)$. Furthermore, $S$ and $T$ are in minimal position. We call a subsurface $Q \subset S\cup T$ from decomposing $ S\cup T$ along $S\cap T$ a \emph{patch}. For a boundary component $P$ of a component $J$ of $M\spl (S\cup T)$, if the co-orientation of $P \cap S$ is the same as the co-orientation of $P \cap T$, we call $P$ a \emph{bad boundary} and $J$ a \emph{bad region}. See Figure \ref{fig:badsolid} as an example.

\begin{figure}[hbt]
\begin{subfigure}[t]{.45\textwidth}%\centering
	\begin{center}
  \includegraphics[width=\columnwidth]{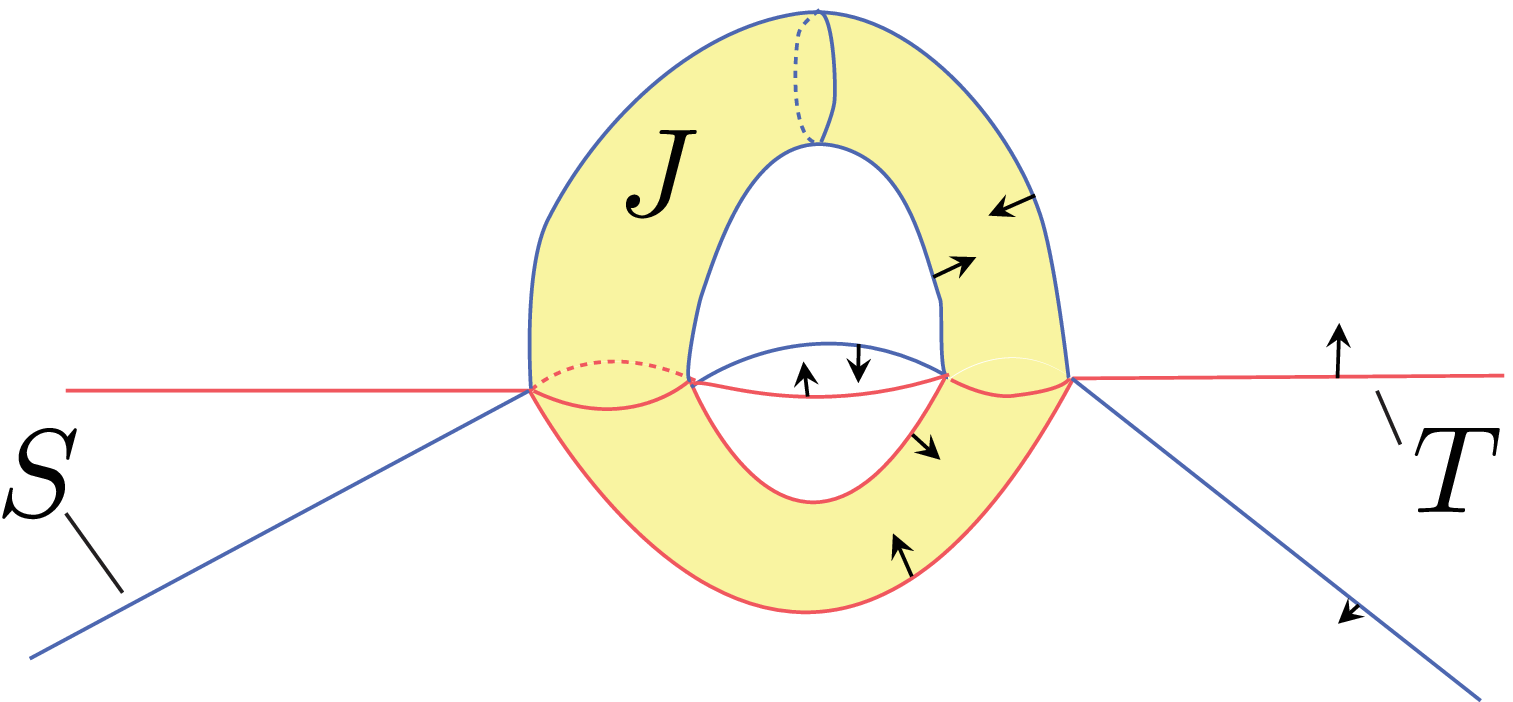}
  \caption{An example of a bad region and a bad boundary.}\label{fig:badsolid}
\end{center}
  \end{subfigure}%
  \hfill
  \begin{subfigure}[t]{.45\textwidth}%\centering
  	\begin{center}
  \includegraphics[width=\columnwidth]{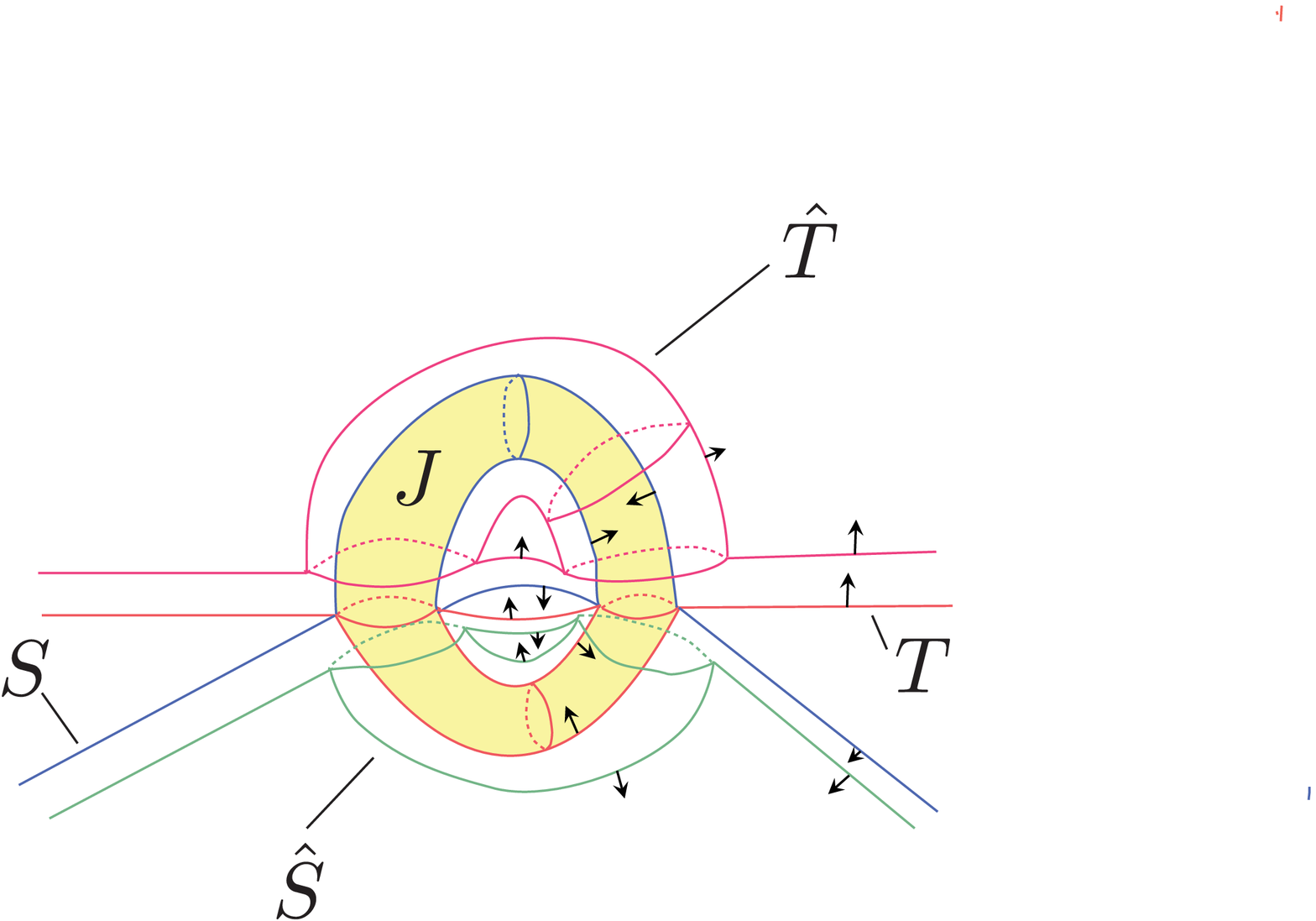}
  \caption{}\label{fig:belly}
  \end{center}
  \end{subfigure}%
  \caption{}
\end{figure}

\end{definition}

\begin{lemma} \label{lem:bad-boundary}
	Under the same hypotheses as Definition \ref{def:belly}, for a bad boundary $P$, $P$ is a torus and it must satisfy one of the following conditions:
	\begin{enumerate}
		\item it consists of only two patches, or
		\item it consists of 4 patches and the bad region $J$ whose boundary contains $P$ is a 4-ST.
	\end{enumerate}
\end{lemma}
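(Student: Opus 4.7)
The plan is threefold: first bound the Euler characteristic of $P$ to force it to be a torus, then identify the patch structure as annuli glued along parallel longitudes, and finally resolve the two cases via a sutured manifold analysis in $M\spl S$.

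For the first step, since $S$ and $T$ are in minimal position every component of $S\cap T$ is essential in both surfaces, so no patch of $P$ can be a disk; hence each patch has $\chi\le 0$ and $\chi(P)=\sum\chi(\text{patches})\le 0$. So $P$ is a torus or has genus $\ge 2$. To rule out higher genus I would use the bad co-orientation to set up a cut-and-paste surgery: the outward orientation of $P$ from $J$ on $P|_S$ is opposite to the orientation of $P|_S$ as a subsurface of $S$, and similarly for $P|_T$. This lets $S'=(S\ssm P|_S)\cup(-P|_T)$ be a consistently oriented properly embedded surface with $[S']=[S]+[P]$ and $\chi(S')=\chi(S)+\chi(P|_T)-\chi(P|_S)$. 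Combining this with the analogous surgery swapping the roles of $S$ and $T$, together with the norm-minimality of both, forces $\chi_-(P|_S)=\chi_-(P|_T)=\chi_-(P)/2$; for $g(P)\ge 2$ the resulting $S'$ (or its $T$-analogue), after a suitable perturbation, yields a properly norm-minimizing surface disjoint from and non-parallel to the remaining components of $S$, contradicting the maximality of the facet surface. Hence $P$ is a torus and each patch is an annulus, so the $2k$ intersection curves on $P$ are all mutually parallel longitudes.

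For the second step, let $N$ denote the component of $M\spl S$ containing $J$, viewed as a sutured manifold. By the bad condition all the $S$-annuli of $P|_S\subset\partial N$ lie consistently on one side, say $R_-(N)$. The $T$-annuli of $P|_T$ then are decomposition annuli in $N$ whose boundary circles all lie on $R_-(N)$; in particular no $T$-annulus can be a product annulus, so each is a non-product annulus. By Lemma \ref{lem:2-non-product} they are all parallel up to orientation, and by Remark \ref{rmk:non-product} the associated guts structure is either a 4-ST or has the two-suture toral boundary form. When $k=1$ we are in case (1). When $k\ge 2$, the parallelism of the $T$-annuli together with the bad orientation forces the region between consecutive $T$-annuli to be a product region; any $k\ge 3$ would then yield an additional non-parallel properly norm-minimizing surface disjoint from $S$, again contradicting maximality. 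So $k=2$, and the two parallel non-product $T$-annuli together with the two $S$-annuli tile $\partial J$ as a torus with four longitudinal sutures matching the 4-ST pattern of Remark \ref{rmk:non-product}(1), so $J$ is itself a 4-ST.

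The delicate parts will be the orientation bookkeeping in the surgery argument, verifying that $S'$ remains properly norm-minimizing (no null-homologous components and boundary curves staying parallel on each torus of $\partial M$), and ruling out the ``2-suture toral'' alternative of Remark \ref{rmk:non-product} in the $k\ge 2$ case so that $J$ really is a 4-ST rather than some larger sutured piece containing a 4-ST as its guts.
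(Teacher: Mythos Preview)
Your proposal has two genuine gaps, each corresponding to a simple observation that you are missing.

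\textbf{The torus conclusion.} You work far too hard here. The bad condition says exactly that the $S$-patches and $T$-patches on $P$ have \emph{matching} co-orientations along $S\cap T$; but this is precisely the gluing rule for the double curve sum. Hence $P$ is a union of components of $S\oplus T$. Since $S$ and $T$ are in minimal position, Lemma~\ref{lem:dcs} gives $\chi_-(S\oplus T)=x([S]+[T])$. Now $P$ is a closed null-homologous subsurface of $S\oplus T$ (it is a boundary component of the region $J$), so removing $P$ from $S\oplus T$ does not change the homology class; minimality of $\chi_-$ then forces $\chi_-(P)=0$, i.e.\ $P$ is a torus. Your cut-and-paste surgery $S'=(S\setminus P|_S)\cup(-P|_T)$ is not needed, and as you yourself flag, the orientation bookkeeping, the claim $[S']=[S]+[P]$, and the assertion that $S'$ contradicts maximality of the facet surface are all unjustified as written.

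\textbf{Bounding $k$ and identifying $J$.} You correctly invoke Lemma~\ref{lem:2-non-product} to see that the $T$-annuli $V_1,\ldots,V_k$ are pairwise parallel in $N=M\spl S$, but then you try to rule out $k\ge 3$ by producing ``an additional non-parallel properly norm-minimizing surface disjoint from $S$'', which you never construct, and you conclude $J$ is a 4-ST only by observing that $\partial J$ has the right suture pattern, which does not determine the interior. The missing idea is \emph{connectivity of $J$}. Once the $V_i$ are parallel, they sit as $\{t_i\}\times A$ in a product neighborhood $I\times A\subset N$, and these slices separate $I\times A$ into product pieces. The region $J$ is a connected component of $M\spl(S\cup T)$ whose boundary contains every $V_i$; since $[0,1]\times\partial A\subset S$ and the $V_i\subset T$, $J$ cannot cross any of these, so $J\cap(I\times A)$ is a single product piece between two consecutive slices. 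Hence exactly two $V_i$ lie on $\partial J$, so $k=2$, and $J$ is that product piece, i.e.\ a solid torus with four longitudinal patches: a 4-ST. This simultaneously bounds $k$ and identifies $J$, eliminating both of the ``delicate parts'' you list at the end.
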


\begin{proof}
	By the definition of the double curve sum, $P$ is a subset of $S\oplus T$. From Lemma \ref{lem:dcs}, $S \oplus T$ minimizes the Thurston norm. Since $\partial J$ is null-homologous, $P$ is a torus. 
	
	Suppose $P$ contains 4 or more patches. We denote the patches on $P$ as \[ U_1,V_1,U_2,V_2,\ldots, U_k,V_k\] in order where $U_i$ is on $P \cap S$ and $V_i$ is on $P \cap T$. Then each $V_i$ is a non-product annulus in $M\spl S$ and by Lemma \ref{lem:2-non-product}, $V_1,V_2,\ldots, V_k$ are parallel to each other. In the neighborhood $I \times A$ (where $A$ is an annulus) of $V_1,V_2,\ldots, V_k$, each $V_i$ is homeomorphic to a copy of $A$. Since $J$ is connected, it can only contain two $V_i$ and hence $J$ is a solid torus with 4 longitudinal patches on $\partial J$.	
	
\end{proof}

We actually need a slight generalization of the previous lemma. 

\begin{lemma}\label{lem:4-ST-2}
	Under the same hypotheses as Definition \ref{def:belly}, let $N$ be a component of $M\spl S$ and $P_1,\ldots,P_k$ are the bad boundaries with respect to $S$ and $T$ in $N$. Then there are $k$ regions of $N\spl T$ such that the guts of each is a 4-ST which can be isotoped to a neighborhood of $P_i$ and furthermore, its suture is isotoped to a patch of $P_i$.
\end{lemma}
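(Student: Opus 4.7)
The plan is to produce, for each bad boundary $P_i$, a specific component $J_i$ of $N\spl T$ and to show that the guts of $J_i$ is a 4-ST sitting in a tubular neighborhood of $P_i$, with its suture isotopic to a patch. Since the tori $P_1,\ldots,P_k$ are pairwise disjoint in $M$, the resulting $J_i$'s will automatically be distinct components of $N\spl T$, giving the $k$ regions claimed.

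For each $P_i$, I will take $J_i$ to be the component of $M\spl(S\cup T)$ having $P_i$ as a boundary torus and lying on a fixed side of $S$ at each $S$-patch of $P_i$. This forces $J_i \subset N = M\spl S$, so $J_i$ is indeed a component of $N\spl T$, inheriting its sutured structure. Then I will split according to Lemma \ref{lem:bad-boundary}: in the 4-patch case, $J_i$ is already a 4-ST, so its guts is itself and each suture bounds a patch. In the 2-patch case, with $U = P_i\cap S$ and $V = P_i\cap T$, the annulus $V$ is a non-product annulus of $N$ since both its boundary components lie on $U\subset\partial N$ and the bad-boundary orientation condition prevents $V$ from being boundary-parallel in $N$. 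Applying Remark \ref{rmk:non-product} to $V$ produces a 4-ST factor localized around $V$, and I will identify this 4-ST with the guts of $J_i$ by choosing a maximal family of product disks and annuli in $J_i$ whose complement is a tubular neighborhood of $P_i$ homeomorphic to a 4-ST with sole suture isotopic to a copy of $U$.

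The hard part will be the 2-patch case: verifying that after product decompositions of $J_i$ exactly a 4-ST tube around $P_i$ remains, rather than something larger. The main control is Lemma \ref{lem:2-non-product}, which asserts that all non-product annuli of $N$ are mutually parallel, so different bad boundaries correspond to distinct parallel classes and produce disjoint 4-STs. Combined with the minimal-position assumption on $S$ and $T$ and the fact that each $P_i$ appears as a torus component of the double curve sum $S\oplus T$ (as used in the proof of Lemma \ref{lem:bad-boundary}), this will confirm that the $k$ 4-STs are distinct and each is localized at its respective $P_i$, with suture isotopic to a patch of $P_i$, as required.
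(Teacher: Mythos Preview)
Your outline has two concrete problems and is missing the key step.

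First, you invert Lemma~\ref{lem:2-non-product}: it says that all non-product annuli in a component $N$ of $M\spl S$ are mutually parallel, i.e.\ they lie in a \emph{single} parallel class, not distinct ones. So the $T$-patches of \emph{all} the $P_i$ are parallel annuli in $N$, and the bad boundaries are nested tori in one solid-torus neighborhood. This is precisely what drives the paper's argument, and your claim that ``different bad boundaries correspond to distinct parallel classes'' goes the wrong way.

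Second, your $2$-patch case is not an argument. Remark~\ref{rmk:non-product} tells you about the guts of $N$, not of your chosen region $J_i$, and it explicitly allows a second outcome (a piece with toral boundary and two sutures) besides a 4-ST. You give no mechanism to rule that out or to cut $J_i$ down to a 4-ST by product decompositions; saying ``I will identify this 4-ST with the guts of $J_i$'' is the claim, not a proof.

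What is missing is the idea the paper actually uses. After nesting $P_1,\dots,P_k$ via Lemma~\ref{lem:2-non-product}, one gets $k-1$ obvious 4-STs between consecutive $P_i$. The $k$-th one is produced by a surgery on $S$ along the two outermost $T$-annuli $V_1,V_2$: the resulting surface is homologous to components of $S$, and the \emph{maximality of $S$ as a facet surface} forces one of the surgered pieces to be isotopic to an existing component $S_1$ of $S$; the region between $V_2$ and $S_1$ then has 4-ST guts. This facet-surface/maximality step is the substance of the lemma, and it does not appear anywhere in your proposal.
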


\begin{proof}
We already prove the case when $k = 1$. So we assume $k \ge 2$.

	By Lemma \ref{lem:2-non-product}, we know that the components of $P_i \cap T$ are parallel to each other up to orientation. Furthermore, since there is at most one bad boundary which consists of only two patches, we can assume $P_1,\ldots,P_k$ are already in order as in each case of Figure \ref{fig:bad-boundary-k-all}. Furthermore $P_1 \cap T, P_2 \cap T,\ldots,P_k \cap T$ are contained in a solid torus whose boundary is the union of 4 annuli $U_1,V_1,U_2,V_2$ where $U_i$ is a subset of $S$, $V_1$ is a patch of $P_1 \cap T$ and $V_2$ is a patch of $P_k\cap T$.	 Then for each $i$, between $P_i$ and $P_{i+1}$, there exists a 4-ST. 	
	
	\begin{figure}[hbt]
	\begin{subfigure}[t]{.5\textwidth}
	\begin{center}
  \includegraphics[width=2.5 in]{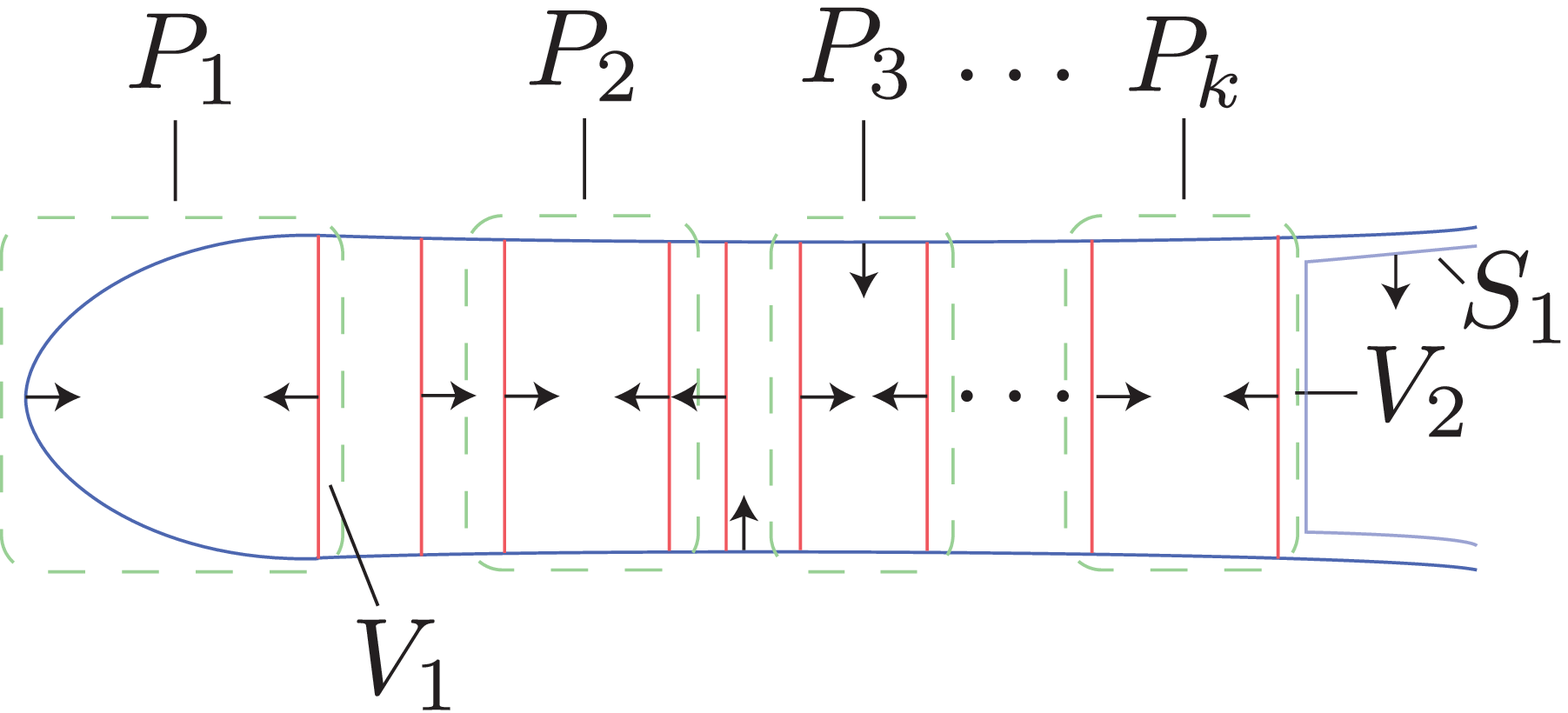}
  \caption{First case.}\label{fig:bad-boundary-2-k}
\end{center}
  \end{subfigure}%
  \begin{subfigure}[t]{.5\textwidth}
	\begin{center}
  \includegraphics[width=2.5 in]{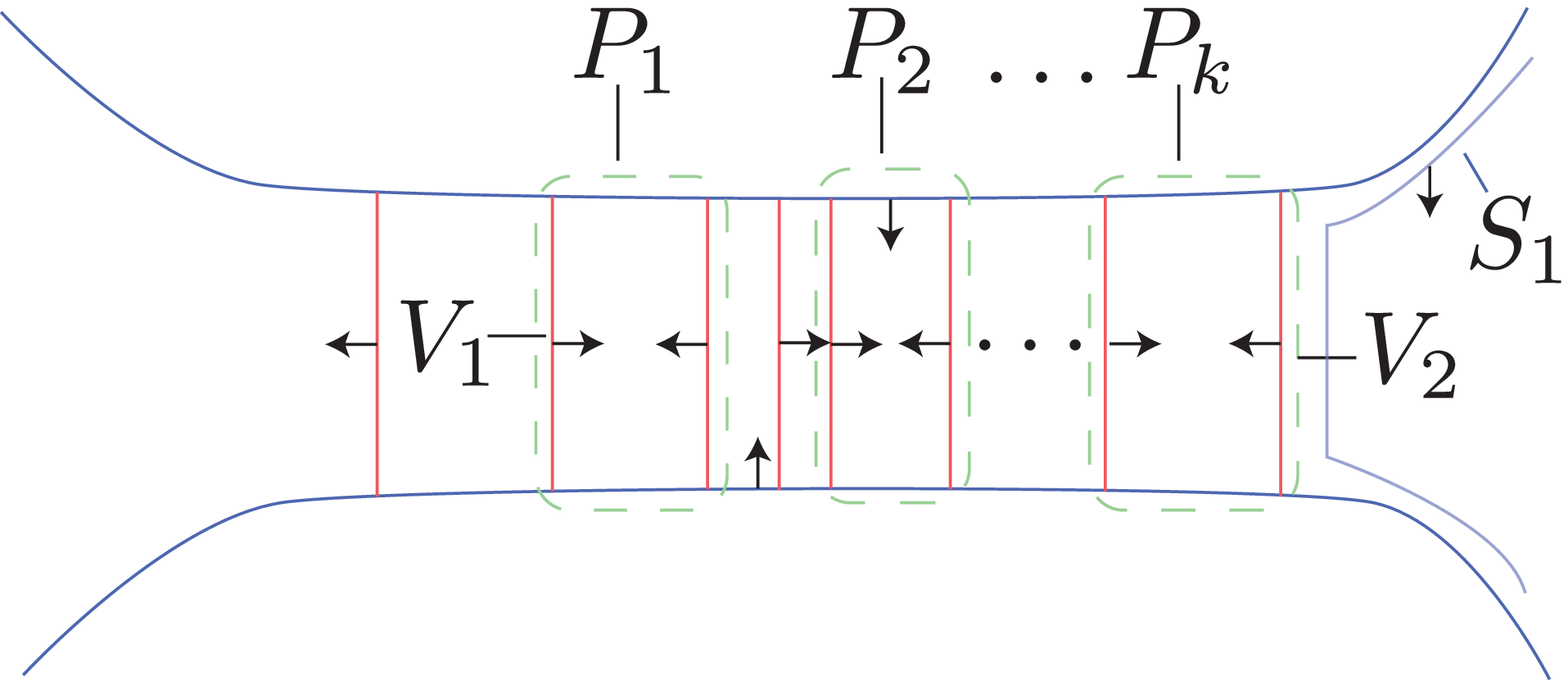}
  \caption{Second case.}\label{fig:bad-boundary-k}
  \end{center}
  \end{subfigure}%

  \caption{$S$ is in blue and the non-product annuli components of $T\cap N$ are in red.}\label{fig:bad-boundary-k-all}
  \end{figure}

	Let $S_0$ be the union of components of $S$ which intersect $V_1 \cup V_2$. We remove $U_1$ and $U_2$ from $S_0$ and attach $V_1$ and $V_2$ to have a homologous surface $S'_0$. Let $S'_i$ denote the component of $S'_0$ which contains $V_i$. Since $S_0$ is homologically nontrivial, at least one of $S'_1$ and $S'_2$ is also homologically nontrivial. Without loss of generality, we assume $S'_2$ is homologically nontrivial. Then by the maximality of $S$, $S'_2$ is isotopic to a component $S_1$ of $S$. Furthermore, $P_k$ can not consist of only two patches because $S'_2$ is not a torus. Thus by Lemma \ref{lem:bad-boundary}, $P_k$ consists of 4 patches and it is the boundary of a 4-ST. Then the region $J$ bounded by $V_2$ and $S_1$ has a 4-ST guts. Even if there are components of $T\cap N$ inside $J$, they do not intersect the guts. Hence we can find a region of $N\spl T$ inside $J$ such that its guts is a 4-ST. 
	
	Since we can assign the 4-ST between $P_i$ and $P_{i+1}$ to $P_i$, we find $k$ distinct regions that satisfy the condition.

	\end{proof}

\begin{definition} Under the same hypotheses as Definition \ref{def:belly}, let $\Gamma$ be the collection of the guts components of $M \spl (S\cup T)$. By Lemma \ref{lem:4-ST-2}, for each bad region, there is a 4-ST guts component of $\Gamma$ in the same component of $M\spl S$. We remove the bad regions and their corresponding 4-ST guts components from $\Gamma$. Then we call the remaining guts components the \emph{belly} of $S$ and $T$ and denote it as $\B(S,T)$. 
\end{definition}

\begin{lemma}
	Let $S$ and $T$ be two facet surface representing a primitive class $z$ in $H_2(M,\partial M)$. Furthermore, $S$ and $T$ are in minimal position.  Then $\B(S,T)$ is equivalent to $\B(T,S)$ as sutured manifolds up to isotopy.
\end{lemma}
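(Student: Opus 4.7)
The plan is to show that both $\B(S,T)$ and $\B(T,S)$ are obtained from a single intrinsic collection by removing a single intrinsic subset. Since the decomposition $M\spl(S\cup T)$ is defined without any ordering of the two surfaces, its collection $\Gamma$ of guts components depends only on the unordered pair $\{S,T\}$. Likewise, the condition that a torus $P\subset \partial J$ be a bad boundary---that the co-orientations of the patches $P\cap S$ and $P\cap T$ agree---is symmetric in $S$ and $T$, so the set of bad regions is also intrinsic to $\{S,T\}$. What remains is to check that the 4-ST guts components removed alongside the bad regions coincide in the two constructions.

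My approach is to give an intrinsic characterization of these 4-STs inside $\Gamma$. Near a bad boundary $P$ sitting in a component $N$ of $M\spl S$, Lemma \ref{lem:2-non-product} implies that the non-product annulus components of $T\cap N$ are parallel to the $T$-patches of $P$, so the local picture near $P$ is a product neighborhood $I\times A$ foliated by parallel annuli alternately decorated by $S$ and $T$. By the proof of Lemma \ref{lem:4-ST-2}, the 4-ST guts components removed from the $S$-side are precisely the product 4-STs cut out between consecutive $S$- and $T$-annuli inside this local model, together with one endpoint 4-ST produced by the maximality of $S$. Since the same local model is visible from the $T$-side---swapping the roles of the two colors yields the same partition of the product neighborhood into 4-ST pieces, with the endpoint 4-ST now produced by the maximality of $T$---the $T$-construction yields the same collection of 4-STs. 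Consequently the subsets removed from $\Gamma$ in forming $\B(S,T)$ and $\B(T,S)$ agree.

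The main obstacle will be making this local identification rigorous when $N$ contains several bad boundaries, and in particular carefully matching up the counting in Lemma \ref{lem:4-ST-2} from the two perspectives so that the bijection between bad regions and removed 4-STs is literally the same set-theoretic assignment. The exceptional case of a bad boundary with only two patches (case (1) of Lemma \ref{lem:bad-boundary}) also requires separate treatment, since its associated 4-ST is produced by the maximality argument rather than sitting between consecutive bad boundaries, and one must check that the two maximality choices (one for $S$, one for $T$) select the same 4-ST in $\Gamma$. Once these bookkeeping issues are settled, the equivalence of $\B(S,T)$ and $\B(T,S)$ as sutured manifolds up to isotopy follows immediately from the JSJ uniqueness of the guts inherent in Definition \ref{reduced}.
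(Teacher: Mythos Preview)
Your approach aims to show that the two constructions remove the \emph{same} set of 4-ST components from $\Gamma$. The paper's proof is much shorter and proves something weaker that suffices: it observes that $\B(S,T)$ and $\B(T,S)$ arise from the common collection $\Gamma$ by deleting the same bad regions together with possibly \emph{different} 4-ST components, but since any two 4-STs are equivalent as sutured manifolds (and in fact, in the local model of Lemma~\ref{lem:4-ST-2} they are all parallel copies of one another inside the product neighborhood $I\times A$, hence isotopic in $M$), the two resulting collections are equivalent. In other words, the paper never attempts the set-theoretic identification you are aiming for.

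This is not just a stylistic difference: the stronger statement you are trying to prove is probably false. In the situation of Lemma~\ref{lem:4-ST-2}, the ``endpoint'' 4-ST obtained from the maximality of $S$ lies on one side of the stack of parallel annuli, while the corresponding endpoint 4-ST obtained from the maximality of $T$ can lie on the other side; there is no reason these should be the same component of $\Gamma$. Your proposal flags exactly this as ``the main obstacle,'' but it is not a bookkeeping issue to be settled---it is a genuine asymmetry in the two assignments. The resolution is not to match them up but to note, as the paper does, that swapping one 4-ST for another equivalent one does not change the equivalence class of the belly. So your outline would work if you weakened the goal from ``same subset removed'' to ``subsets removed differ by equivalent 4-STs,'' at which point the argument collapses to the paper's one-line proof.
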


\begin{proof}
The only difference between $\B(S,T)$ and $\B(T,S)$ is that we remove different equivalent 4-ST's related to the bad regions. That does not affect the equivalence of $\B(S,T)$ and $\B(T,S)$.
\end{proof}

\begin{remark}  \label{rmk:belly-def}
In the definition of the belly $\B(S,T)$, we remove the bad regions and their corresponding 4-ST guts components from $\Gamma$. This is equivalent to say that if we have two parallel non-product annuli with co-orientation pointing each other, we can remove one of them. Hence after this operation, we do not have two parallel non-product annuli with co-orientation pointing each other. We further remove duplicate parallel non-product with same orientation since they do not affect the guts of the resulting manifold from decomposing $M \spl S$ along the remaining patches of $T \spl S$. Hence for each group of parallel non-product annuli (without orientation) of $T \spl S$, there remain at most two non-product annuli. If there remain two non-product annuli, their co-orientations point outward and they bound a 4-ST. If we remove any one of them, the guts does not change. 

Therefore, we can replace each group of parallel non-product annuli (without orientation) of $T \spl S$ by only one non-product annulus to have $T'$ such that the resulting sutured manifold from decomposing $M \spl S$ along $T'$ has no bad boundary and its guts is equivalent to $\B(S,T)$. From now on, we use this definition for $\B(S,T)$. 
\end{remark}

\begin{lemma} \label{lem:belly}
	Let $S$ and $T$ be two facet surfaces representing a primitive class $z$ in $H_2(M,\partial M)$. Furthermore, $S$ and $T$ are in minimal position. Then the guts with respect to $S$ is equivalent to the belly $\B(S,T)$.
\end{lemma}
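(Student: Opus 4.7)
The plan is to compare $\Gamma(M, S)$, the guts of $M \spl S$, with $\B(S, T)$, which by Remark \ref{rmk:belly-def} equals the guts of $M \spl (S \cup T')$, where $T' \subset T \spl S$ retains at most one representative per parallel family of non-product annuli in each component of $M\spl S$. Thus it suffices to show that further decomposing $M \spl S$ along $T'$ preserves the guts up to equivalence.

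First, I would classify the components of $T' \cap N$ in each component $N$ of $M \spl S$. Since $S$ and $T$ are facet surfaces in minimal position with every intersection curve essential (so that Lemma \ref{lem:dcs} applies), the components of $T \cap N$ are essential properly embedded surfaces in $N$, and the maximality of $S$ forces any component that is not properly norm-minimizing for a class in $N$ to be a product annulus, product disk, or non-product annulus. Product annuli and product disks do not change the guts under decomposition (\cite[Lemma 3.12]{Ga1}), so only the non-product annuli contribute. By Lemma \ref{lem:2-non-product}, the non-product annuli in $T' \cap N$ are mutually parallel, and by construction $T'$ retains at most one representative per parallel class.

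Second, I would analyze the effect of decomposing $N$ along the retained non-product annulus $A$ (when one exists). By Remark \ref{rmk:non-product}, the presence of such $A$ forces $\Gamma(N)$ to be either a 4-ST (Case 1) or a toral-boundary piece with the described suture structure (Case 2). In each case one verifies that decomposing $N$ along $A$ and then taking the guts recovers $\Gamma(N)$; any extra 4-ST components produced in the process arise from bad regions along bad-boundary tori as described in Lemmas \ref{lem:bad-boundary} and \ref{lem:4-ST-2}, and these are precisely the 4-STs discarded in passing to $\B(S, T)$. JSJ-uniqueness of the guts (Definition \ref{reduced}) guarantees that the resulting sutured manifold is isotopic to $\Gamma(M, S)$. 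The main obstacle will be the case in which $\Gamma(N)$ is a 4-ST and $A$ is parallel to its characteristic annulus: here a careful analysis of how decomposing a 4-ST along a non-product annulus interacts with its JSJ structure is needed to confirm that the 4-ST is preserved (up to isotopy and window absorption) under the further decomposition, together with the bookkeeping of Lemma \ref{lem:4-ST-2} to correctly identify exactly the 4-STs removed in the belly.
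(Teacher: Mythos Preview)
There is a genuine gap: your proposal never handles the patches of $T\spl S$ with \emph{negative} Euler characteristic. These are the heart of the argument. When you write that ``the maximality of $S$ forces any component that is not properly norm-minimizing for a class in $N$ to be a product annulus, product disk, or non-product annulus,'' you then proceed as though only annuli and disks occur, and you say nothing about the remaining case. But a patch $Q\subset T\cap N$ with $\chi(Q)<0$ is neither a product piece nor an annulus, and it is \emph{not} automatically parallel to $R_\pm(N)$ just because $S$ is a facet surface: $Q$ is only a subsurface of a horizontal surface, with its boundary sitting in the interior of $R_\pm$, so horizontal-primeness of $\Gamma(N)$ (Lemma~\ref{lem:reducedhorizontallyprime}) does not apply to $Q$ directly. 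You need a mechanism that pushes $Q$ off the guts, and you have not supplied one.

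This is exactly where the paper invests most of its effort. It sets $T_0$ to be the union of the negative Euler characteristic patches, first checks that $N\spl T_0$ is taut, and then invokes Scharlemann's Lemma (\cite[Lemma~3.2]{Sc2}, stated here as Lemma~\ref{avoidintersection}) to isotope $kS\oplus T$ off $S$; since the isotopy moves only the part near $S$, the patches $T_0$ stay inside $N$ and become boundary-parallel there. Even with this, one must analyze the annuli swept out by $\partial T_0$ under the isotopy: when those traces are all product or boundary-parallel annuli the conclusion is direct, but when a non-product annulus appears one falls into the situation of Remark~\ref{rmk:non-product} and must separately argue (using horizontal-primeness in the toral-boundary case, and the 4-ST structure otherwise) that $T_0\cap G_i$ can be pushed into a collar of $R_\pm(G_i)$. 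None of this is captured by your classification-into-annuli step, and JSJ-uniqueness alone cannot substitute for it. Your treatment of the non-product annulus patches is broadly in the right spirit, but until the $\chi<0$ patches are dealt with, the argument does not go through.
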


\begin{proof}
By Lemma \ref{lem:dcs}, $S\oplus T$ minimizes the Thurston norm among surfaces representing $[S]+[T]$ (though it might contains null-homologous tori).

Let $T_0$ be the union of patches of $T\spl S$ with negative Euler characteristic and $N$ be $M \spl S$. We want to show that after the sutured decomposition along $T_0$, $N \spl T_0$ is taut.

Since $S\oplus T$ minimizes the Thurston norm, $N \spl T_0$ is irreducible. We suppose a component $N_0$ of $N \spl T_0$ is not taut. Without loss of generality, we assume $R_+(N_0)$ is not taut. Then there is a surface $R'$ that is homologous to $R_+(N_0)$ in $H_2(N_0, \partial R_+(N_0))$ and has smaller Thurston norm. We can do a cut-and-paste surgery on $S\oplus T$ that removes $R_+(N_0)$ and pastes back $R'$. The resulting surface is homologous to $S\oplus T$ but with smaller Thurston norm, which is a contradiction.

%Suppose $T_0$ is not boundary parallel in $N$. In this context, we say a surface $U$ is boundary parallel if there is an isotopy of $U$ onto a boundary component of $N$ where we only allow the boundary of $U$ to move from $R_{\pm}$ to $R_{\mp}$. By \cite[Lemma 3.2]{Sc2}, there exists a positive integer $k$ such that $kS \oplus T$ is disjoint from $S$. We drop the null-homologous components of it and denote the rest as $T'$. Since each component of $T_0$ has negative Euler characteristic, it still exists in $T'$. That means $T'$ is not boundary parallel in $N$ either. This violates the maximality of non-parallel components of $S$. Hence $T_0$ is boundary parallel in $N$ and therefore the guts of $N$ is equivalent to the guts of $N\spl T_0$.

%Suppose $T_0$ is not boundary parallel in $N$. In this context, we say a surface $U$ is boundary parallel if there is an isotopy of $U$ onto a boundary component of $N$ where we only allow the boundary of $U$ to move from $R_{\pm}$ to $R_{\mp}$. 

By \cite[Lemma 3.2]{Sc2}, there exists a positive integer $k$ such that we can isotope $kS \oplus T$ away from $S$. Furthermore, as described in the proof of the lemma, the isotopy only moves the part of $kS \oplus T$ which is inside a tubular neighborhood $\eta(S)$ of $S$ and fix the rest. Note that $T_0$ is $T \spl S $. Then during the isotopy, $T_0$ is always inside $N$. 

We drop the null-homologous components of $kS \oplus T$ and denote the rest as $T'$. Because $T$ is homologous to $S$, $T'$ is homologous to a multiple of $S$. Since $S$ is a facet surface, we know that each component of $T'$ is boundary parallel to $N$. Here, we say a surface $U$ is boundary parallel if there is an isotopy of $U$ onto the boundary of $N$ where we allow moving the boundary of $U$. Since each component of $T_0$ has negative Euler characteristic, $T'$ still contains $T_0$ and hence $T_0$ is boundary parallel in $N$ too. 

Let $N_i$ be a component of $N$ and $T_{0,i} \stackrel{\Delta}{=} T_0 \cap N_i$. We denote the isotopy of $T_{0,i}$ onto the boundary of $N_i$ as $f_t: T_{0,i} \to N_i, t \in [0,1]$ where $f_0 = \mathds{1}$ and $T'_{0,i} \stackrel{\Delta}{=} f_1(T_{0,i}) \subset \partial N_i$. Because the isotopy of $T_{0,i}$ comes from the isotopy of $T'$, the images $f_t(T_{0,i})$ for different $t$ do not intersect each other outside of $\partial N_i$. Hence the isotopy restricted on the unfixed components of $\partial T_{0,i}$ induces a collection $A_i$ of embedded annuli and $\partial A_i \subset \partial T_{0,i} \cup \partial T'_{0,i}$. See Figure \ref{fig:isotopy-all} for a schemetic point of view. We consider two cases of $A_i$:

\begin{figure}[hbt]
	\begin{subfigure}[t]{.5\textwidth}
	\begin{center}
  \includegraphics[width=2.5 in]{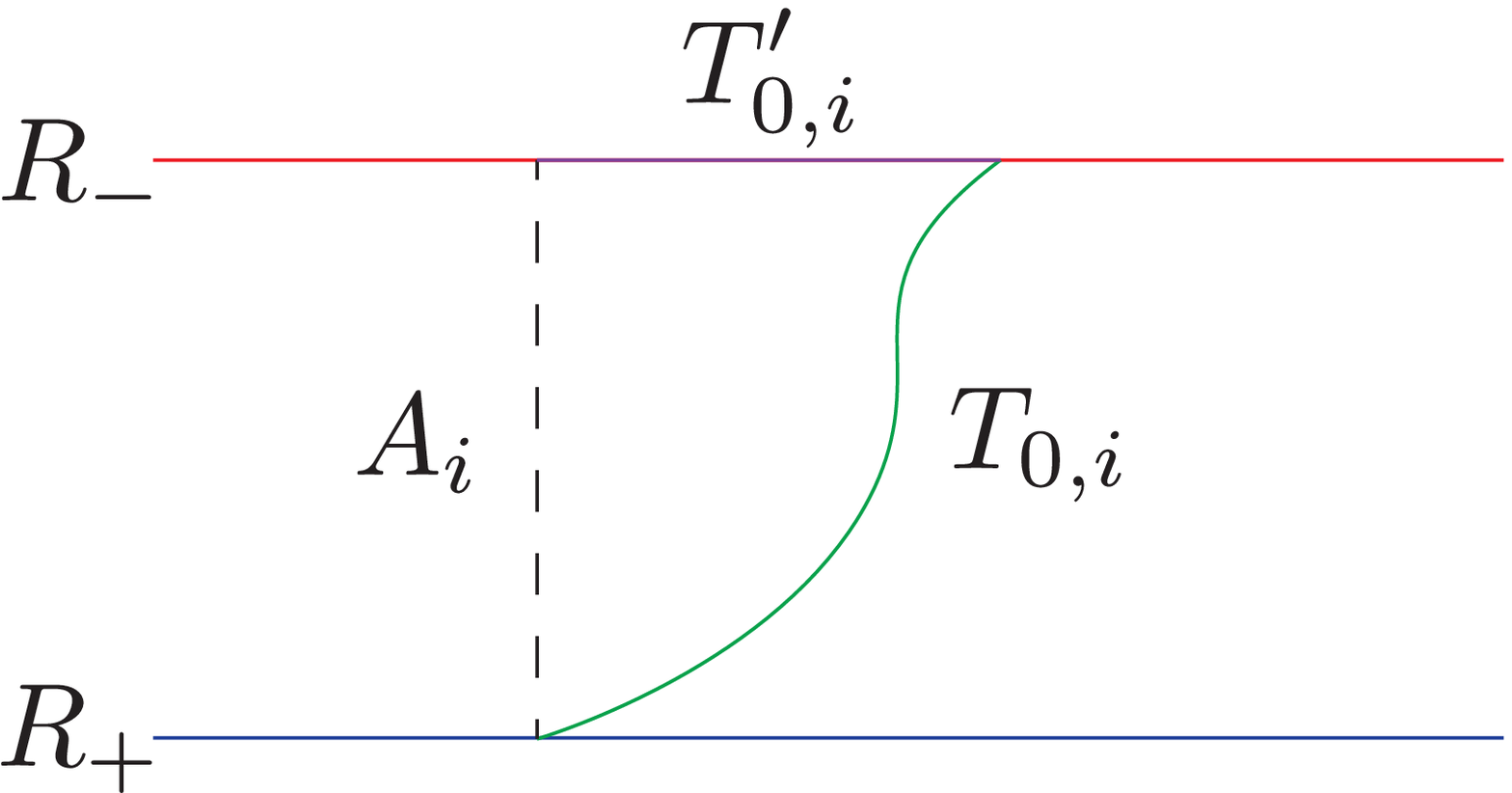}
  \caption{First case.}\label{fig:isotopy}
\end{center}
  \end{subfigure}%
  \begin{subfigure}[t]{.5\textwidth}
	\begin{center}
  \includegraphics[width=2.5 in]{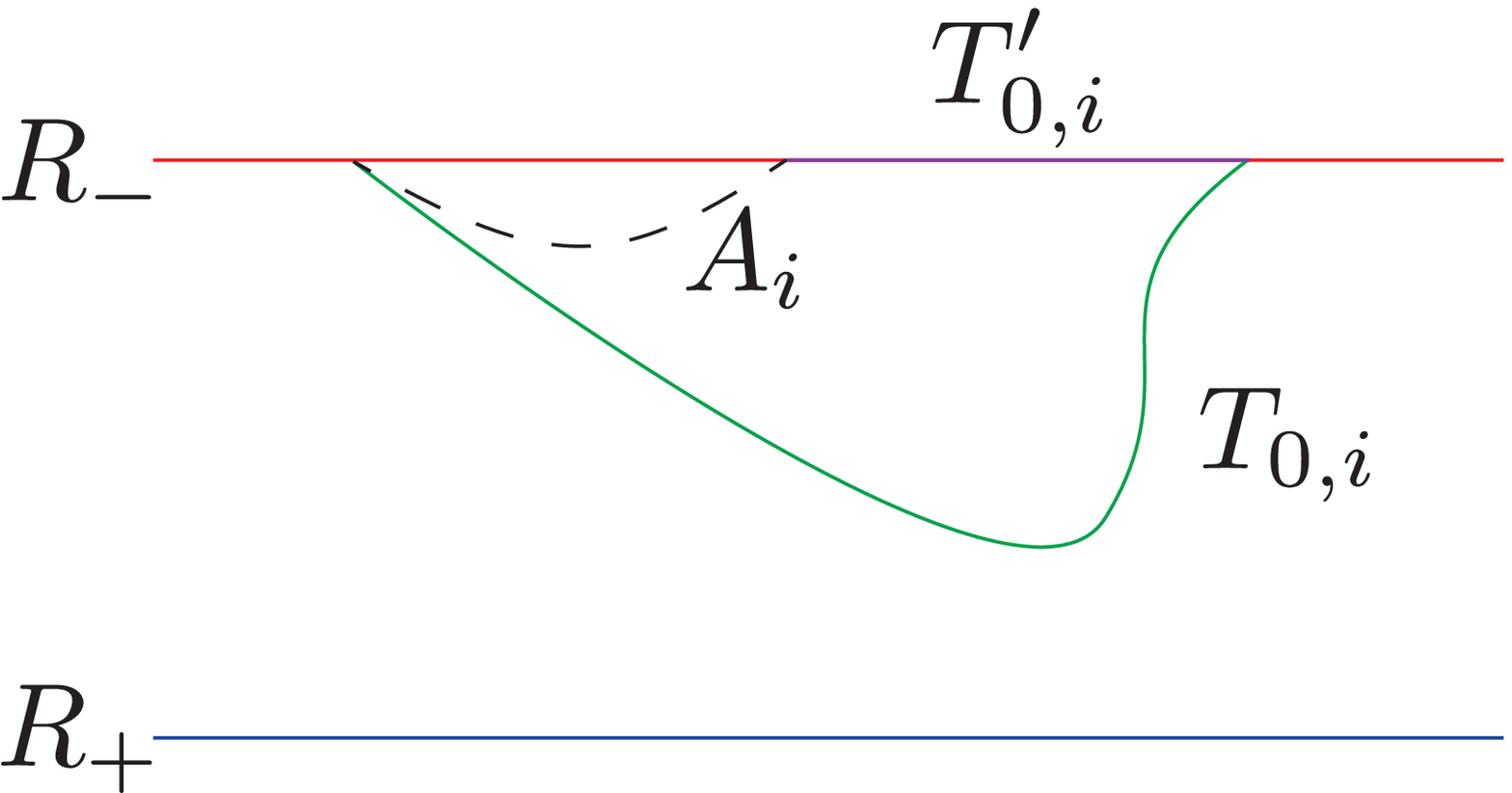}
  \caption{Second case.}\label{fig:isotopy-2}
  \end{center}
  \end{subfigure}%

  \caption{A shematic view of the isotopy of $T_{0,i}$. $R_-$ and $R_+$ of $N_i$ are in red and blue, respectively. $T_{0,i}$ is in green and $T'_{0,i}$ is in purple.} \label{fig:isotopy-all}
  \end{figure}

\begin{enumerate}[topsep=0.5ex]
	\item $A_i$ only consists of product annuli or boundary parallel annuli. See Figure \ref{fig:isotopy}. For each component $T_{0,i,j}$ of $T_{0,i}$, we let $A_{i,j}$ be the union of components of $A_i$ induced from $\partial T_{0,i,j}$ and $N'_{i,j}$ be the resulting sutured manifold from decomposing $N_i$ along the essential components of $A_{i,j}$. In order to show that decomposing $N_i$ along $T_{0,i,j}$ does not change the guts of $N_i$, we only need to show that the guts of decomposing along $T_{0,i,j} \cup A_{i,j}$ is equivalent to the guts of decomposing along $A_{i,j}$, which is equivalent to the guts of $N_i$. We think of the decomposition along $T_{0,i,j} \cup A_{i,j}$ as first decomposing $N_i$ along $A_{i,j}$ and then decomposing $N'_{i,j}$ along $T_{0,i,j} \spl A_{i,j}$. Since $T_{0,i,j} \spl A_{i,j}$ is boundary parallel in $N'_{i,j}$ with boundary on the sutured annuli of $N'_{i,j}$, we know that decomposing $N'_{i,j}$ along $T_{0,i,j} \spl A_{i,j}$ only creates product sutured manifold components. Thus we prove that decomposing $N_i$ along $T_{0,i}$ does not change the guts of $N_i$. 
	\item $A_i$ contains an essential non-product annulus $A_{0,i}$. See Figure \ref{fig:isotopy-2}. Then, as in Remark \ref{rmk:non-product}, we know that the guts $G_i$ of $N_i$ is a 4-ST or a sutured manifold with toral boundary and 2 sutures on a boundary component. Let $C_{0,i}$ be the boundary component of $A_{0,i}$ in $T_{0,i}$. Because each component of $T_{0,i}$ either contains $C_{0,i}$ or is disjoint with it, the algebraic intersection number between any component of $T_{0,i}$ and $C_{0,i}$ is 0. Since each suture of $G_i$ is isotopic to $C_{0,i}$, the algebraic intersection number between any component of $T_{0,i}$ and any suture of $G_i$ is 0 too. Noting that the boundary of $G_i$ is a torus, $T_{0,i} \cap \partial G_i$ are isotopic to the sutures of $G_i$. Hence we can isotope $T_{0,i}$ so that $T_{0,i}$ only intersects $\partial G_i$ on $\gamma(G_i)$. Now we analyze the two possiblities of $G_i$ separately.
	\begin{enumerate}
	\item If $G_i$ is a 4-ST, we know that $T_{0,i} \cap G_i$ consists of annuli parallel to $\partial G_i$ and we can isotope $T_{0,i} \cap G_i$ into the tubular neighborhood of $R_+(G_i)$ and $R_-(G_i)$ in $N_i$. 
	\item On the other hand, suppose $G_i$ is a sutured manifold with toral boundary, only two sutures on a boundary component and the other boundary components as sutured tori. Since $T_{0,i}$ is null-homologous in $H_2(N_i,\partial N_i)$, $T_{0,i} \cap G_i$ is also null-homologous in $H_2(G_i,\partial G_i)$, i.e., $T_{0,i} \cap G_i$ is homologous to $R_+(G_i)$ and $R_-(G_i)$ rel $\gamma(G_i)$. Note that, by Lemma \ref{lem:reducedhorizontallyprime}, $G_i$ is horizontally prime. Hence $T_{0,i} \cap G_i$ is parallel to $R_+(G_i)$ or $R_-(G_i)$ and thus can be isotoped into the tubular neighborhood of $R_+(G_i)$ and $R_-(G_i)$ in $N_i$. 
	\end{enumerate}
	
	 \begin{figure}[hbt]
	
  \includegraphics[width=0.7\columnwidth]{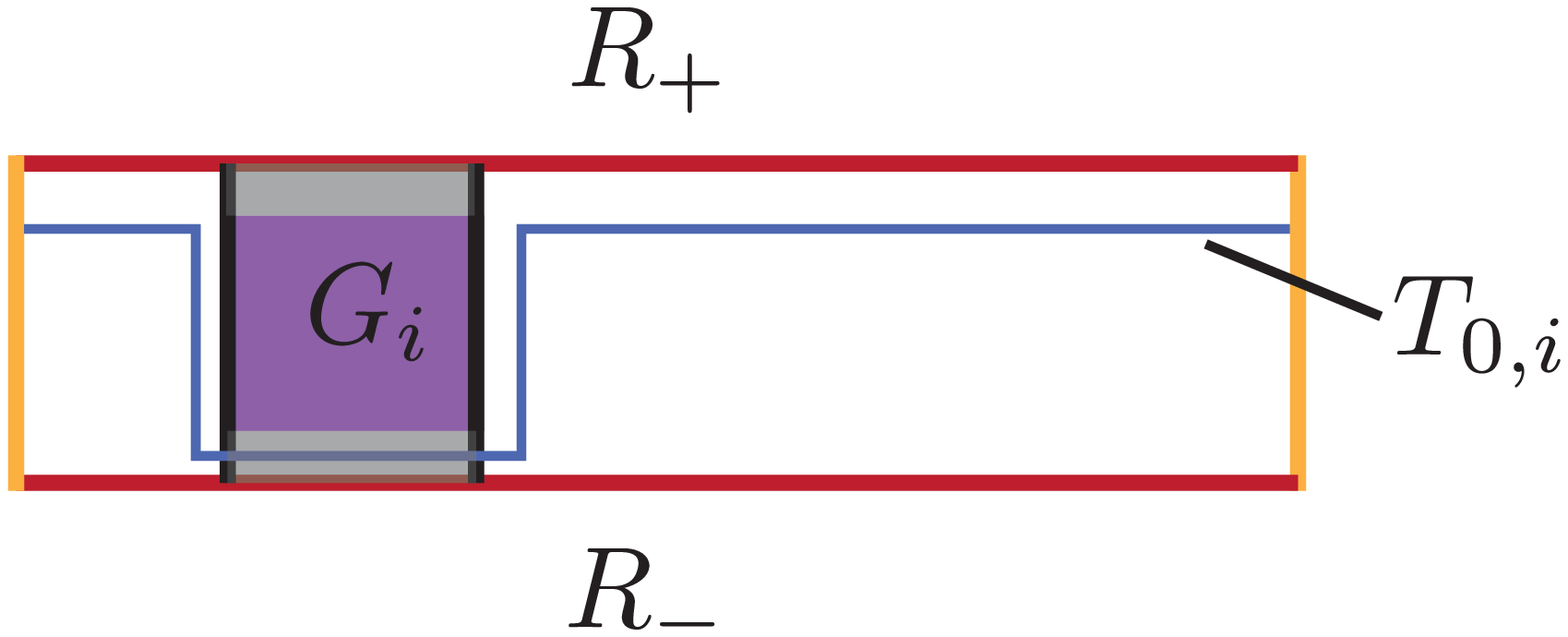}
    \caption{$T_{0,i} \cap G_i$ is isotoped into the tubular neighborhood of $R_+(G_i)$ and $R_-(G_i)$ in $N_i$, which is in gray. } \label{fig:isotopy-gut}
\end{figure}

Therefore in each of these two cases, we can isotope $T_{0,i} \cap G_i$ into the tubular neighborhood of $R_+(G_i)$ and $R_-(G_i)$ in $N_i$, as in Figure \ref{fig:isotopy-gut}. Since decomposing $N_i$ along $T_{0,i}$ obtains the guts from decomposing $G_i$ along $T_{0,i} \cap G_i$, it only creates product sutured manifolds and does not change the guts of $N_i$.

\end{enumerate}

In conclusion, the guts of $N$ is equivalent to the guts of $N\spl T_0$.

Next, we consider annuli patches of $T\spl S$. If an annulus patch is a product annulus or has a boundary component on $\partial M$, decomposing along it does not affect the guts. So we focus on non-product annuli. 

We replace each group of parallel non-product annuli of $T \cap (M \spl S)$ by only one non-product annuli with a specific orientation as in Remark \ref{rmk:belly-def}. Let $A_1,\ldots,A_k$ be the non-parallel non-product annuli. $A_i$ does not form a bad boundary with $S$. 

Let $N_i$ be the component of $M\spl S$ containing $A_i$. Without loss of generality, we assume the boundary components of $A_i$ are on $R_-(N_i)$. 

We consider separately for the two possible cases.

First, we suppose that the resulting surface from decomposing $R_-(N_i)$ along $\partial A_i$ contains an annulus $A'$. Then the union of $A'$ and $A_i$ is a torus. Since $A_i$ does not form a bad boundary with $S$, $A_i$ is homologous to $A'$ with orientation. Then we can replace $A'$ with $A_i$ from $R_-(N_i)$ to have a norm-minimizing surface $R'$ homologous to $R_-(N_i)$. By the maximality of $S$, $R'$ is parallel to a subsurface of $R_-(N_i) \cup R_+(N_i)$. Since the guts of the resulting manifold from decomposing $N_i$ along $A_i$ is equivalent to the guts of the resulting manifold from decomposing $N_i$ along $R'$, decomposing along $A_i$ does not change the guts of $N_i$.

Second, we suppose each component of the resulting surface from decomposing $R_-(N_i)$ along $\partial A_i$ has negative Euler characteristic. We do a surgery for $R_-(N_i)$ along $A_i$ to have a norm-minimizing surface $R'$ homologous to $R_-(N_i)$. Similarly, $R'$ is parallel to $R_-(N_i) \cup R_+(N_i)$. Furthermore, the components of $R'$ that contains a copy of $A_i$ are parallel to $R_+(N_i)$. Therefore the guts of $N_i$ contains a 4-ST and because of Lemma \ref{lem:atmostonegut}, the guts of $N_i$ is actually a 4-ST. Furthermore, after decomposing $N_i$ along $A_i$, the guts still remains as a 4-ST. See Figure \ref{fig:NT} for reference.

	\begin{figure}[hbt]
	\begin{center}
		\includegraphics[width=3 in]{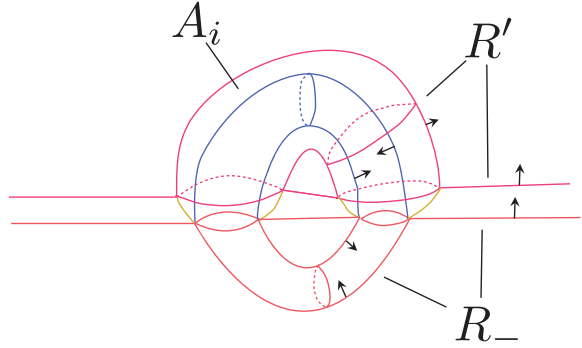}
  \caption{Illustration of surfaces inside $N_i$. $R_-$ is in red, $A_i$ is in blue and $R'$ is in pink.}\label{fig:NT}
	\end{center}
	  
  \end{figure}

In summary, decomposing $M\spl S$ along $A_1 \cup \ldots \cup A_k$ also does not change the guts. Since we already consider all possibilities, decomposing $M\spl S$ along each patch of $T\spl S$ does not change the guts of $M\spl S$. The theorem is proved.

\end{proof}

\begin{proof}[Proof of Theorem~\ref{thm:gutsforhomology}]
	Let $S, T$ be 2 facet surfaces representing $z$ and isotope $S,T$ to be in minimal position. By Lemma \ref{lem:belly}, $\Gamma(M,S)$ is equivalent to $\B(S,T)$ and $\Gamma(M,T)$ is equivalent to $\B(T,S)$. Hence by Lemma \ref{lem:belly}, $\Gamma(M,S)$ is equivalent to $\Gamma(M,T)$.
\end{proof}

\begin{definition} \label{def:gutsforhomology}
	For an element $z$ in $H_2(M,\partial M;\Z)$, we define $(\Gamma(z),\gamma(z))$ (or just $\Gamma(z)$) to be the guts with respect to facet surfaces representing $z$ in Theorem \ref{thm:gutsforhomology}.
	\end{definition}

\begin{definition}
	Given a taut sutured manifold $(M,R_+,R_-,\gamma)$, we do a sutured decomposition along a maximal collection of horizontal surfaces and then decompose along a maximal product decomposition surfaces. By throwing away the components which are product sutured manifolds, we call the remaining sutured manifolds \emph{horizontally prime guts} of $(M,\gamma)$.
\end{definition}

Using the same method in the proof of Theorem \ref{thm:gutsforhomology}, we are able to show 

\begin{corollary}\label{cor:sutured guts} 
Let $(M,\gamma)$ be a taut sutured manifold such that there is no non-separating annulus or torus rel $\gamma$. 
The horizontally prime guts of $(M,\gamma)$ is well defined up to equivalence.
\end{corollary}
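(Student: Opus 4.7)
The plan is to adapt the proof of Theorem~\ref{thm:gutsforhomology} with maximal collections of disjoint, pairwise non-parallel horizontal surfaces in $(M,\gamma)$ playing the role of facet surfaces. Every horizontal surface represents $[R_+] = [R_-] \in H_2(M,\gamma)$ and realizes the minimal $\chi_-$ in that class (by tautness), so two such maximal collections $\mathcal{F}_1$ and $\mathcal{F}_2$ behave exactly like two facet surfaces for a common homology class. After isotoping $\mathcal{F}_1$ and $\mathcal{F}_2$ into minimal position, the goal is to show that the horizontally prime guts produced by decomposing $(M,\gamma)$ first along $\mathcal{F}_i$ and then along a maximal system of product decomposition surfaces are equivalent for $i = 1, 2$.

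First I would transcribe the preliminary structural lemmas \ref{lem:reducedhorizontallyprime}--\ref{lem:4-ST-2} into the sutured setting. The hypothesis that $(M,\gamma)$ contains no non-separating annulus or torus rel $\gamma$ is the sutured analog of non-degeneracy of the Thurston norm; it is precisely what is needed for the cut-and-paste maximality arguments (swapping a subsurface of $R_\pm$ or of a component of $\mathcal{F}_i$ against another norm-minimizing surface) to produce embedded, non-parallel horizontal surfaces rather than collapse. With that hypothesis in place, each guts component is reduced and horizontally prime, each component of $M \spl \mathcal{F}_i$ contains at most one guts component, non-product annuli inside a component are mutually parallel up to orientation, and bad boundaries have the same two forms as in Lemma~\ref{lem:bad-boundary}.

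Next, define the belly $\mathcal{B}(\mathcal{F}_1, \mathcal{F}_2)$ exactly as in Definition~\ref{def:belly} and Remark~\ref{rmk:belly-def}: form the double curve sum $\mathcal{F}_1 \oplus \mathcal{F}_2$, list patches and bad regions, and delete the $4$-ST guts components paired with bad regions. The symmetry $\mathcal{B}(\mathcal{F}_1,\mathcal{F}_2) \simeq \mathcal{B}(\mathcal{F}_2,\mathcal{F}_1)$ is formal. The core step is the analog of Lemma~\ref{lem:belly}: the guts of $M \spl \mathcal{F}_1$ coincides with $\mathcal{B}(\mathcal{F}_1,\mathcal{F}_2)$. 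I would run the same three-part case analysis on patches of $\mathcal{F}_2 \spl \mathcal{F}_1$---patches of negative Euler characteristic, product or boundary-parallel annuli, and non-product annuli---invoking Scharlemann's isotopy lemma \cite[Lemma~3.2]{Sc2} to push $k \mathcal{F}_1 \oplus \mathcal{F}_2$ off $\mathcal{F}_1$ after dropping null-homologous components, and then showing in each subcase that the relevant patch either lies in a product region (so decomposing along it creates only product pieces) or produces only a $4$-ST that has already been removed in the belly construction.

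The main obstacle is verifying that the isotopy-based subcase inside a guts component containing an essential non-product annulus goes through without directly appealing to the toral-boundary hypothesis of Theorem~\ref{thm:gutsforhomology}. The key observation is that both possible shapes of such a guts component ($4$-ST, or a component with two sutures on a boundary torus) are determined by the local picture near the non-product annulus, and horizontal primality (Lemma~\ref{lem:reducedhorizontallyprime}) forces any homologically horizontal subsurface inside the component to be boundary-parallel; this is enough to isotope the patch into a tubular neighborhood of $R_\pm$ as in Figure~\ref{fig:isotopy-gut}. Once this transfer is complete, each candidate horizontally prime guts is equivalent to the common belly $\mathcal{B}(\mathcal{F}_1, \mathcal{F}_2)$, yielding well-definedness up to equivalence.
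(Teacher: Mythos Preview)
Your proposal is correct and is precisely the approach the paper intends: the paper's own ``proof'' of Corollary~\ref{cor:sutured guts} consists of the single sentence ``Using the same method in the proof of Theorem~\ref{thm:gutsforhomology}, we are able to show\ldots'', and you have spelled out in detail what that transfer entails, including the role of the no-non-separating-annulus/torus hypothesis as the substitute for non-degeneracy and the verification that the non-product-annulus subcase still lands in a horizontally prime component where boundary-parallelism can be forced.
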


\section{Guts and Thurston Cones}\label{sec:guts:cone}

In this section, we are going to prove Theorem \ref{thm:samegutsboundarycase}.

\begin{definition} \label{def:guts:restriction}
	Let $M$ be a $3$-manifold with boundary a disjoint union of tori $\sqcup_{i=1}^n P_i$ and $\alpha$ be an element in $H_2(M,\partial M;\R)$. Let $\partial: H_2(M,\partial M;\R) \to H_1(\partial M;\R)$ be the map from the long exact sequence of $(M,\partial M)$. We denote the restriction of $\alpha$ on a boundary component $P_i$ as the coordinate of $\partial \alpha$ in $H_1(P_i;\R)$. We say two non-zero elements $u_1, u_2 \in H_1(P_i;\R)$ are in opposite orientation if $u_1$ is a negative multiple of $u_2$.
\end{definition}

\begin{namedthm*}{Theorem \ref{thm:samegutsboundarycase}}
Let $M$ be an irreducible, orientable $3$-manifold with boundary a disjoint union of tori $\sqcup_{i=1}^n P_i$ and non-degenerate Thurston norm. Let $y,z$ be two elements in an open face of the Thurston sphere. If there is an open segment $(v,w)$ containing $y,z$ in the open face such that the restrictions of $v$ and $w$ on each boundary component are not in opposite orientations, the guts $\Gamma(y)$ is equivalent to $\Gamma(z)$.
\end{namedthm*}

Before proving the theorem, we need a slight modification of \cite[Proposition 2.8]{miller2019effect} and a lemma.

\begin{proposition} \label{prop:maggie}

	Let $\alpha_1$ and $\alpha_2$ be in the same closed Thurston face of a non-degenerate Thurston norm $x_M$, where $M$ is a $3$-manifold with boundary a disjoint union of tori $\sqcup_{i=1}^n P_i$. If the restriction of $\alpha_1$ and $\alpha_2$ on each boundary component $P_j$ are not in opposite orientation, then there exist properly norm-minimizing surfaces $S_1$ and $S_2$ with $[S_i]=\alpha_i$ so that for any positive integers $a$ and $b$, $aS_1 \oplus bS_2$ is properly norm-minimizing.

\end{proposition}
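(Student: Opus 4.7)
The plan is to adapt the proof of \cite[Proposition 2.8]{miller2019effect} by weakening its boundary hypothesis from ``same orientation'' to ``non-opposite orientation'', which requires refining only the local analysis on each boundary torus $P_j$. First I would choose properly norm-minimizing representatives $S_1$ and $S_2$ of $\alpha_1$ and $\alpha_2$ and isotope them into minimal position so that every component of $S_1\cap S_2$ is essential in both $S_1$ and $S_2$; this follows from the incompressibility of norm-minimizing surfaces in the irreducible manifold $M$ via standard innermost-disk and outermost-arc moves, which do not change the homology classes.

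Once the surfaces are in general position, the interior bookkeeping is straightforward. Lemma~\ref{lem:dcs} gives $\chi_-(aS_1 \oplus bS_2) = a\chi_-(S_1) + b\chi_-(S_2) = a\,x_M(\alpha_1) + b\,x_M(\alpha_2)$ for all positive integers $a,b$, and Thurston's linearity of $x_M$ on the closed face containing $\alpha_1$ and $\alpha_2$ identifies this sum with $x_M(a\alpha_1 + b\alpha_2)$. Hence $aS_1 \oplus bS_2$ is automatically norm-minimizing, up to discarding any null-homologous components of the double curve sum. What remains is to verify that the boundary of $aS_1\oplus bS_2$ on each torus $P_j$ is a coherently oriented family of parallel simple closed curves, which is the new input in this version.

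For the boundary analysis, on each $P_j$ let $s_i \in H_1(P_j;\Z)$ denote the primitive oriented slope carried by $\partial S_i \cap P_j$ when this intersection is nonempty, and write $k_i$ for the number of such boundary components. The non-opposite hypothesis splits into three cases: either one of $s_1, s_2$ vanishes, and then $aS_1\oplus bS_2$ inherits its boundary on $P_j$ from the nontrivial surface; or $s_1 = s_2$ as primitive oriented classes, in which case I can isotope the two boundary families on $P_j$ to be disjoint parallel curves before summing; or $s_1$ and $s_2$ are linearly independent, in which case the oriented smoothing on $P_j$ produces essential curves of total homology class $a k_1 s_1 + b k_2 s_2 \neq 0$, forcing the resolved boundary to be a coherently oriented family of parallel essential simple closed curves. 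The main obstacle I foresee is making all of these local torus adjustments simultaneously on every boundary component while preserving essentialness of the interior intersection $S_1\cap S_2$; this should be handled by performing each boundary isotopy inside a disjoint collar of $P_j$ and then reapplying the essentialness reductions in the complement of these collars, neither of which disturbs the homology classes $[S_i] = \alpha_i$.
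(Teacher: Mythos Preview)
Your overall strategy and your boundary analysis on each torus $P_j$ are essentially what the paper does. The gap is in the interior. Requiring every curve of $S_1\cap S_2$ to be essential in both surfaces suffices to invoke Lemma~\ref{lem:dcs} and obtain $\chi_-(aS_1\oplus bS_2)=a\chi_-(S_1)+b\chi_-(S_2)$, but it does \emph{not} prevent $aS_1\oplus bS_2$ from acquiring null-homologous closed components: an annulus piece of $S_1\setminus S_2$ bounded by two parallel essential curves of $S_1\cap S_2$ can glue to annulus pieces of $S_2\setminus S_1$ to form a null-homologous torus in the double curve sum. Your fallback ``up to discarding any null-homologous components'' does not rescue the argument, because the proposition asserts that $aS_1\oplus bS_2$ \emph{itself} is properly norm-minimizing---and that exact statement is what is used downstream. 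In the proof of Theorem~\ref{thm:samegutsboundarycase} one builds a branched surface from $S_1\cup S_2$ and explicitly invokes the condition that every component of $S_1\setminus S_2$ and $S_2\setminus S_1$ has negative Euler characteristic; discarding components after the fact would wreck that construction.

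The paper closes this gap not by controlling $S_1\cap S_2$ but by modifying $S_2$ beforehand: whenever $S_1\setminus S_2$ contains a component $C$ disjoint from $\partial S_1$ with $\chi(C)\ge 0$, one surgers $S_2$ along $C$ (replacing a bicollar of $\partial C$ in $S_2$ by two parallel copies of $C$) and throws away null-homologous pieces of the resulting $S_2$. Iterating this leaves $[S_2]=\alpha_2$ unchanged and keeps $S_2$ norm-minimizing, but forces every interior piece of $S_1\setminus S_2$ to have $\chi<0$. Then any closed component of $aS_1\oplus bS_2$ must contain such a piece, hence has negative Euler characteristic, so there are no sphere or torus components to discard. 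This surgery step, rather than essentialness of intersection curves, is the missing ingredient in your plan.
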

\begin{proof} (Cf. the proof of \cite[Proposition 2.8]{miller2019effect})
	Since $x_M$ is nondegenerate, $\chi_-(R)=-\chi(R)$ for any norm-minimizing surface $R$ in $M$.

Let $S_1$ and $S_2$ be properly norm-minimizing surfaces with $[S_i]=\alpha_i$. Isotope the $S_i$ near their boundaries so that $\partial S_1$ and $\partial S_2$ intersect minimally. Because the restriction of $\alpha_1$ on each boundary component $P_i$ is not a negative multiple of the restriction of $\alpha_2$, we know that for each $j$, every component of $\partial (aS_1 \oplus bS_2) \cap P_j$ is nontrivial and have the same orientation which means it represents the same homology class. We have $x_M(a[S_1]+b[S_2])=ax_M([S_1])+bx_M([S_2])=a\chi_-(S_1)+b\chi_-(S_2)=-a\chi(S_1)-b\chi(S_2)=-\chi(aS_1\oplus bS_2)$. Then we are done if $aS_1\oplus bS_2$ has no disk, $2$-sphere, torus or annulus components.

Suppose $S_1\setminus S_2$ includes a component $C$ which does not meet $\partial S_1$ and with $\chi(C)\ge 0$. Then surger $S_2$ along $C$ and throw away null-homologous components to obtain $S'_2$ (i.e. $S'_2:=[S_2\setminus((\partial C)\times I)]\cup(C\times S^0)$). The surface $S'_2$ is homologous to $S_2$. Since $\chi(C)\ge 0$, $S'_2$ is norm-minimizing. Set $S_2:=S'_2$ and repeat until $S_1\setminus S_2$ includes no such component $C$. Now any closed components of $aS_1 \oplus bS_2$ must include a region homeomorphic to some component of $S_1\setminus S_2$, which must have negative Euler characteristic. Therefore, $aS_1 \oplus bS_2$ has no closed sphere or torus components.

Since every component of $\partial (aS_1 \oplus bS_2) \cap P_j$ is nontrivial, $aS_1\oplus bS_2$ has no disk components. Suppose $aS_1\oplus bS_2$ has an annulus component $A$, by the non-degeneracy of $x_M$, $A$ is homologically trivial in $H_2(M,\partial M)$ which means $\partial A$ is on a boundary component $P_j$ and is homologically trivial in $H_1(P_j)$ which violates the fact that the two components of $\partial A$ have the same orientation.

\end{proof}

We state a simple lemma without proof.
\begin{lemma} \label{lem:productsuturedmanifold}
	Let $(N,R_+,R_-,\gamma)$ be a product sutured manifold with $\chi(N) < 0$. If $S$ is a properly embedded surface without annulus components such that $S$ is homologous to $R_+$ in $H_2(N,\gamma)$ and $\chi_-(S) = \chi_-(R+)$, then $S$ is parallel to $R_+$.
\end{lemma}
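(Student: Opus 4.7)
The plan is to realize $N \cong F \times I$ with $F = R_+$, $R_+ = F \times \{1\}$, $R_- = F \times \{0\}$, and $\gamma = \partial F \times I$, and then reduce the statement to the classical classification of incompressible surfaces in $F \times I$. Since $\chi(N) < 0$, the underlying surface $F$ has $\chi(F) < 0$, so $\chi_-(R_+) = -\chi(F)$ coincides with the Thurston norm $x_N([R_+])$. The hypothesis $\chi_-(S) = \chi_-(R_+)$ therefore says exactly that $S$ is norm-minimizing in its relative homology class $[R_+] \in H_2(N,\gamma)$.

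First, I would reduce to the case in which $S$ is incompressible, $\partial$-incompressible, and contains no sphere, disk, or null-homologous component. Any such feature admits a surgery or deletion that preserves $[S]$ while strictly decreasing $\chi_-$, contradicting norm-minimality; a little care shows that these surgeries can be arranged so as not to introduce new annulus components among the essential pieces, so the ``no annulus'' hypothesis on $S$ survives the reduction.

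Next, I would invoke Waldhausen's classification of essential surfaces in a product: a properly embedded, orientable, incompressible, $\partial$-incompressible surface in $F \times I$ with boundary in $\partial F \times I$ is isotopic to a disjoint union of horizontal leaves $F \times \{t_i\}$ together with vertical annuli $c_j \times I$, where each $c_j$ is an essential simple closed curve in $F$. Vertical disks $\alpha \times I$ over properly embedded arcs $\alpha \subset F$ are excluded in our setting because their boundary would meet $R_\pm$, contradicting $\partial S \subset \gamma$. By hypothesis $S$ has no annulus components, so the vertical pieces vanish and every component of $S$ is a horizontal leaf parallel to $R_+$.

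Finally, let $n_+$ and $n_-$ denote the number of horizontal components oriented respectively in agreement with and opposite to $R_+$. The relation $n_+ - n_- = 1$ follows from $[S] = [R_+]$ in $H_2(N,\gamma)$, and the relation $(n_+ + n_-) \chi_-(R_+) = \chi_-(R_+)$ follows from the Euler-characteristic hypothesis together with $\chi_-(R_+) > 0$; these force $n_+ = 1$ and $n_- = 0$. Hence $S$ is a single horizontal copy of $F$ and is therefore parallel to $R_+$. The main obstacle is the initial reduction: one must confirm that the successive compressions and removals used to reach the incompressible, $\partial$-incompressible situation neither alter the relative homology class nor introduce annulus components among the essential pieces; once that is arranged, the application of Waldhausen's theorem and the orientation bookkeeping become routine.
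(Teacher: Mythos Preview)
The paper does not prove this lemma; it is introduced with the sentence ``We state a simple lemma without proof.'' So there is no argument in the paper to compare against, and your outline supplies exactly the standard proof the authors presumably had in mind.

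Your approach is correct. Two remarks on the reduction you flag as the main obstacle. First, in the paper's application the product sutured manifold is a \emph{component} $J$ of $L_S$, so the base surface $F$ is connected with $\chi(F)<0$; then $N=F\times I$ is irreducible and $\partial$-irreducible, and $\pi_1(N)\cong\pi_1(F)$ contains no $\Z^2$. Hence $N$ admits no incompressible tori, and any compressible torus component of $S$ bounds a solid torus and may simply be discarded. After that every remaining component of $S$ has $\chi<0$, and then any compression or $\partial$-compression strictly increases $\chi$ and so strictly decreases $\chi_-$, contradicting norm-minimality. Thus $S$ is already incompressible and $\partial$-incompressible, and no annulus components can appear along the way; the reduction is lighter than you fear. (Note also that since $\partial S\subset\gamma$ is disjoint from $R_\pm$, there are no $\partial$-compressing disks toward $R_\pm$ at all, so $\partial$-incompressibility toward $\gamma$ is all that Waldhausen needs here.)

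Second, your bookkeeping $(n_+-n_-=1,\ n_++n_-=1)$ tacitly uses that $F$ is connected, so that every horizontal component is a full copy of $F$ and $[R_+]$ is primitive. If $F$ had a torus component one could insert cancelling pairs of horizontal tori without changing $[S]$ or $\chi_-(S)$, and the lemma as literally stated would fail; this does not arise in the paper's usage, but it is worth being aware of.
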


\begin{proof}[Proof of Theorem \ref{thm:samegutsboundarycase}]
Let $y$ and $z$ be in the same open Thurston cone and $(v,w)$ be an open segment containing the closed segment $[y,z]$ in the same cone such that the restrictions of $v$ and $w$ on each boundary component are not in opposite orientations. By Proposition \ref{prop:maggie}, we can take properly norm-minimizing representatives $S_v$ and $S_w$ for $v$ and $w$ respectively such that $S_v\backslash S_w$ and $S_w\backslash S_v$ has no component of nonnegative Euler characteristic. Moreover, for any positive integers $a$ and $b$, $aS_v \oplus bS_w$ is properly norm-minimizing. 

Similar to \cite[Proposition 8]{oertel1986homology}, we can create a properly taut oriented branched surface $ B$ carrying $S_v$ and $S_w$ by changing each curve of $S_v \cap S_w$ to an ``annulus or disk of contact". $ B$ also carries $aS_v \oplus bS_w$ for any positive integers $a$ and $b$. Let $N_B$ be the fibered neighborhood of $B$ and we decompose $\partial N_B$ into two parts: horizontal boundary $\partial_h N_B$ which is transversal to the fibers and vertical boundary $\partial_v N_B$ which consists of some fibers. See Figure \ref{fig:branched-surface}.

	\begin{figure}[hbt]
	\begin{center}
		\includegraphics[width=3 in]{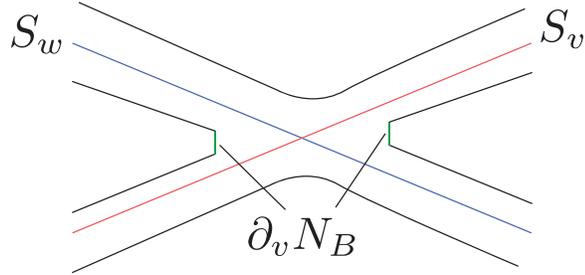}
  \caption{Schematic figure for the branched surface $B$.}\label{fig:branched-surface}
	\end{center}
	  
  \end{figure}
  
Let $S$ be the surface $aS_v \oplus bS_w$ for some $a$ and $b$. As in \cite[Section 6]{tollefson1996taut}, we isotope $2S$ so that it contains $int_{2S}(\partial_h N_B)$ where $int_{2S}(\partial_h N_B)$ is the interior of $\partial_h N_B$ in $2S$ and let $L_S$ denote the $I$-bundle obtained by splitting $N_B$ along $2S - int_{2S}(\partial_h N_B)$. Then the vertical boundary of $L_S$ is the union of $\partial M$ and $\partial_v N_B$.

 We want to show no component of $L_S$ is $K \times I$, where $K$ is a punctured disk in the interior of $2S$ and there is a boundary curve $\alpha$ of $K$ such that $\alpha \times \{0\}$, $\alpha \times \{1\}$ bound disks $D_0, D_1$ in $2S$ containing $K \times \{0\},K \times \{1\}$, respectively. Otherwise, suppose there is such a component $K\times I$. Since $K$ is in the interior of $2S$, $\partial D_0$ is in $\partial_v N_B$. Because $\partial_v N_B$ is isotopic to $(S_u \cap S_w) \times I$, we can isotope $D_0$ such that $\partial D_0$ is a component of $S_v \cap S_w$. Since $S_v$ is incompressible, $\partial D_0$ bounds a disk in $S_v$. This means there is a disk component in $S_v\backslash S_w$, which is a contradiction to the construction of $S_v$ in Proposition \ref{prop:maggie}.

We can also obtain $L_S$ as follows. First, we decompose $M$ along $2S$ and then further decompose along $\partial_v N_B$. The resulting manifold is the union of $L_S$ and $M\backslash N_B$. Let $A$ be an annulus component of $\partial_v N_B$. Then by the preceding paragraph $\partial A$ is nontrivial in $2S$ and hence $A$ is a product annulus. Therefore $\partial_v N_B$ is a union of some product annuli and product disks.

Let $F$ be a facet surface that represents $av+bw$ and contains $S$. Isotope $2F$ such that it intersects minimally the product annuli and product disks that separate $L_S$ and $M\backslash N_B$. For each component $J$ of $L_S$, we think of it as a product sutured manifold $(J,R_+,R_-,\gamma)$. We want to show that, $2F$ intersects each component $\gamma_i$ of $\gamma$ in the same orientation. Since $2F$ is properly norm-minimizing, $2F \cap J$ is a taut surface homologous to a multiple of $[R_+]$. By Lemma \ref{lem:productsuturedmanifold}, $2F \cap J$ is parallel to $R_+$ and hence $2F$ intersects $\gamma$ in the same orientation.

We want to show that $2F \cap (M \backslash N_B)$ is a maximal collection of horizontal surfaces (with multiplicities) for $M \backslash N_B$. Since $2F$ intersects each of $\partial_v N_B$ and $\partial M$ in the same direction, $2F \cap (M \backslash N_B)$ is a union of horizontal surfaces for $M \backslash N_B$. If there is a horizontal surface $F_0$ in a component $(J,R_+,R_-,\gamma)$ of $M \backslash N_B$ which is disjoint with and not parallel to $2F \cap (M \backslash N_B)$, we can remove $R_+$ from $2F$ and attach $F_0$ to the rest to have a surface which is disjoint with and not parallel to $2F$. This violates the maximality of $F$.

 Hence the guts of $av+bw$ is equivalent to the horizontally prime guts of $M \backslash N_B$. By Corollary \ref{cor:sutured guts}, the guts of $av+bw$ remain the same for different choice of $a$ and $b$. Since we can choose $a$ and $b$ such that $av+bw$ is a positive multiple of $y$ and $z$ respectively, $\Gamma(y)$ is equivalent to $\Gamma(z)$.

\end{proof}

For the case when $M$ is closed, any open segment $(v,w)$ containing $y,z$ in the open face satisfies the condition that the restrictions of $v$ and $w$ on each boundary component are not in opposite orientations. Hence we have the following corollary.

\begin{corollary}
	When $M$ is closed, let $y,z$ be two elements in an open face of the Thurston sphere. Then the guts $\Gamma(y)$ are equivalent to $\Gamma(z)$.
\end{corollary}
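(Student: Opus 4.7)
The plan is to deduce this corollary directly from Theorem \ref{thm:samegutsboundarycase} by observing that the hypothesis about boundary restrictions becomes vacuous in the closed case. Concretely, given $y$ and $z$ in the same open face $\mathring\Delta$ of the Thurston sphere, I would first pick any open segment $(v,w) \subset \mathring\Delta$ whose closure contains both $y$ and $z$. Such a segment exists because the closed segment $[y,z]$ sits in the open face $\mathring\Delta$, so slightly extending its endpoints in opposite directions along the affine line through $y$ and $z$ while staying inside $\mathring\Delta$ yields the desired $(v,w)$.

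Since $\partial M = \emptyset$, there are no tori $P_i$ on which to check the orientation condition in the hypothesis of Theorem \ref{thm:samegutsboundarycase}, so the condition ``the restrictions of $v$ and $w$ on each boundary component are not in opposite orientations'' is satisfied vacuously. I would then invoke Theorem \ref{thm:samegutsboundarycase} to conclude $\Gamma(y) \simeq \Gamma(z)$.

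The only subtlety is to confirm that every step in the proof of Theorem \ref{thm:samegutsboundarycase} goes through when $M$ is closed. Proposition \ref{prop:maggie} uses the boundary condition only to rule out annulus and disk components in $aS_v \oplus bS_w$; in the closed case no boundary curves exist, so the step that excludes disks and annuli via boundary-orientation arguments is simply skipped, and the argument ruling out spheres and tori via negative Euler characteristic of components of $S_v \setminus S_w$ is unchanged (using that $x_M$ is non-degenerate). The branched-surface and horizontally-prime-guts arguments then apply verbatim, since the vertical boundary $\partial_v N_B$ still forms product annuli and discs, and $\partial M$ simply drops out of the analysis. No additional obstacle arises, so the corollary follows.
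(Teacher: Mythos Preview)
Your proposal is correct and matches the paper's approach exactly: the paper simply observes that when $\partial M=\emptyset$ the boundary-orientation hypothesis of Theorem~\ref{thm:samegutsboundarycase} is vacuous for any open segment $(v,w)$ in the open face, and then invokes that theorem. Your additional verification that the proof of Theorem~\ref{thm:samegutsboundarycase} (via Proposition~\ref{prop:maggie}) goes through in the closed case is not needed, since the theorem is already stated to cover this situation, but it does no harm.
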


\section{Restriction Map to the Guts in Homology}\label{sec:guts:map}

The following theorem is a generalization of what was used in Agol's paper about virtual fibering conjecture \cite[Lemma 4.1]{Ag1}.

\begin{namedthm*}{Theorem \ref{T2}}
		Let $z$ lie on a $k$-codimensional open Thurston cone $\Delta$. We denote the subspace spanned by $\Delta$ as $V$. Then the kernel of the restriction map $\varphi:H_2(M,\partial M;\Z) \to H_2(\Gamma(z),\partial \Gamma(z);\Z)$ is a subspace of $V$.
	
	If there is a neighborhood of $z$ in $\Delta$ which does not contain two elements whose restrictions on a boundary component are in opposite orientations, the kernel of $\varphi$ is exactly $V$ and the rank of the image of $\varphi$ is $k$, i.e. the rank of the image of 
	$$  H^1(M) \to H^1(\Gamma(z))$$
	is $k$.
\end{namedthm*}

Before proving the theorem, we need several useful lemmas:
\begin{lemma}\label{avoid}
Let $z$ be an element in $H_2(M,\partial M;\Z)$ and $\varphi$ be the restriction map $\varphi:H_2(M,\partial M;\Z) \to H_2(\Gamma(z),\partial \Gamma(z);\Z)$.
	The image of $u$ under the map $\varphi$ is 0 if and only if we can find a surface $\Sigma$ representing a multiple of $u$ in $H_2(M,\partial M;\Z)$ avoids the guts of $z$.
\end{lemma}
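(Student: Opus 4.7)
The plan is to translate both directions through Poincaré-Lefschetz duality. The isomorphism $H_2(M,\partial M;\Z)\cong H^1(M;\Z)\cong [M,S^1]$, together with its analogue for the pair $(\Gamma(z),\partial\Gamma(z))$, identifies $\varphi$ with the restriction map on cohomology induced by the inclusion $\Gamma(z)\hookrightarrow M$. Under this identification, a geometric surface representative of a class is the preimage of a regular value of a map $M\to S^1$, and avoiding $\Gamma(z)$ corresponds to that map being (homotopic to one that is) locally constant on $\Gamma(z)$.

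For the $(\Leftarrow)$ direction, suppose $\Sigma$ is a surface representing $nu$ for some nonzero integer $n$ and is disjoint from $\Gamma(z)$. Putting $\Sigma$ in general position with $\partial\Gamma(z)$ if needed, the transverse-intersection definition of $\varphi$ gives $\varphi(nu)=[\Sigma\cap\Gamma(z)]=0$. Since $\Gamma(z)$ is a compact orientable $3$-manifold with boundary, $H_2(\Gamma(z),\partial\Gamma(z);\Z)\cong H^1(\Gamma(z);\Z)$ is torsion-free, so $n\varphi(u)=0$ forces $\varphi(u)=0$.

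For the harder $(\Rightarrow)$ direction, I will represent the Poincaré-Lefschetz dual of $u$ by a smooth map $f\colon M\to S^1$, so that $u$ is Poincaré dual to the pulled-back generator. The hypothesis $\varphi(u)=0$ translates to $f|_{\Gamma(z)}$ being null-homotopic. Using a smooth collar of $\Gamma(z)$ in $M$, I extend a null-homotopy of $f|_{\Gamma(z)}$ to a homotopy of $f$ supported in this collar, producing a smooth map $\tilde f\colon M\to S^1$ homotopic to $f$ and constant on $\Gamma(z)$. Any regular value of $\tilde f$ different from this constant pulls back to a properly embedded surface $\Sigma$ representing $u$ and disjoint from $\Gamma(z)$.

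The step I expect to be the main obstacle is executing the collar extension carefully: one must arrange the homotopy so that $\tilde f$ remains smooth (so that regular-value preimages are smooth surfaces) and so that the preimage meets $\partial M$ transversely as a proper surface. A secondary point is to make explicit that the transverse-intersection definition of $\varphi$ used in $(\Leftarrow)$ agrees with the cohomological restriction used in $(\Rightarrow)$; this is standard Poincaré-Lefschetz bookkeeping but worth spelling out. The ``multiple of $u$'' phrasing in the statement gives slack if one prefers to first pass to an $n$-fold cover $S^1\to S^1$ to tame any primitivity issues, though the construction above already yields a representative of $u$ itself.
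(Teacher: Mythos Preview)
Your argument is correct. Both directions work as you describe: the $(\Leftarrow)$ direction is immediate from the geometric meaning of $\varphi$ together with torsion-freeness of $H^1$, and for $(\Rightarrow)$ the homotopy extension property for the pair $(M,\Gamma(z))$ lets you push $f$ to be constant on $\Gamma(z)$, after which a generic regular value gives the desired surface (in fact representing $u$ itself, not merely a multiple).

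The paper's proof is different in flavor: it is a two-line algebraic argument. Writing $M'=M\setminus\interior\Gamma(z)$, the long exact sequence of the triple (or pair) together with excision $H_2(M,M')\cong H_2(\Gamma(z),\partial\Gamma(z))$ gives an exact sequence
\[
H_2(M',\partial M\cap M')\xrightarrow{i_*} H_2(M,\partial M)\xrightarrow{\varphi} H_2(\Gamma(z),\partial\Gamma(z)),
\]
so $\varphi(u)=0$ is equivalent to $u$ lying in the image of $i_*$, and then one invokes the standard fact that every second relative homology class in an orientable $3$-manifold is represented by an embedded surface. Your approach unwinds this same exactness through the identification $H^1\cong [\,\cdot\,,S^1]$ and builds the surface by hand; it is more explicit but requires the smoothing and collar bookkeeping you flag, whereas the paper's route sidesteps those technicalities entirely at the cost of quoting excision and the surface-representability fact.
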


\begin{proof}
	First, we assume $\partial M = \emptyset$.
	Looking at the long exact sequence for $(M, M')$ where $M' \stackrel{\Delta}{=} M\backslash \interior{\Gamma(z)} $:
	$$\cdots \to H_2(M') \stackrel{i_*}{\to} H_2(M) {\to} H_2(M,M') \to  H_1(M') \to \cdots $$
	
	we have a short exact sequence
		$$H_2(M') \stackrel{i_*}{\to} H_2(M) \stackrel{\varphi}{\to} H_2(\Gamma(z),\partial \Gamma(z))$$
		
	because of the excision lemma $ H_2(M,M') \cong  H_2(\Gamma(z),\partial \Gamma(z))$.
	
	Hence $\varphi(u) = 0$ if and only if there exists a surface $\Sigma$ in $M'$ such that $i_*([\Sigma]) = u$ and hence $\varphi(i_*([\Sigma])) = 0$, which means $\Sigma$ avoids the guts of $z$.
	
	When $\partial M \ne \emptyset$, we have a long exact sequence 
	\[
	\cdots \to H^1(M,\Gamma(z)) {\to} H^1(M) \stackrel{\varphi}{\to} H^1(\Gamma(z)) \to \cdots
	\]
	and the excision lemma
	\[
	H^1(M,\Gamma(z)) \cong H^1(M\backslash \interior{\Gamma(z)},\partial \Gamma(z)\backslash \partial M ).
	\]
	
	Hence we have the following short exact sequence:
	\[
	H^1(M\backslash \interior{\Gamma(z)},\partial \Gamma(z)\backslash \partial M ) {\to} H^1(M) \stackrel{\varphi}{\to} H^1(\Gamma(z))	
	\]

	Noting that $\partial (M \backslash \interior{\Gamma(z)}) = (\partial \Gamma(z)\backslash \partial M) \cup (\partial M \backslash \partial \Gamma(z))$, by Poincare duality for manifolds with boundary, we have $H^1(M\backslash \interior{\Gamma(z)},\partial \Gamma(z)\backslash \partial M ) \cong H_2(M\backslash \interior{\Gamma(z)}, \partial M \backslash \partial \Gamma(z))$. Also by Poincare duality, we have $H^1(M) \cong H_2(M,\partial M)$ and $H^1(\Gamma(z)) \cong H_2(\Gamma(z), \partial \Gamma(z))$.

	 Hence, we have the following exact sequence
	\[
	H_2(M\backslash \interior{\Gamma(z)}, \partial M \backslash \partial \Gamma(z)) \stackrel{i_*}{\to} H_2(M,\partial M) \stackrel{\varphi}{\to} H_2(\Gamma(z), \partial \Gamma(z))
	\]
	which plays the same role in the case when $\partial M$ is empty.
\end{proof}

\begin{lemma}[{\cite[Lemma 3.2]{Sc2}}]
\label{avoidintersection} 	Suppose $(T,\partial T)$ and $(S,\partial S)$ are properly embedded surfaces in general position in the 3-manifold $(M,\partial M)$, and $T$ is homologous rel $\partial$ to a surface in $M$, which is disjoint from the interior of $S$. Then:
	
	(a) $\Lambda = T\cap S$ has trivial fundamental class in $H_1(S,\partial S)$,
	
	(b) $T \oplus kS$ is properly isotopic to a surface disjoint from $S$ for some sufficiently large positive integer.
\end{lemma}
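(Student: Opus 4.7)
The two parts have rather different flavors: part (a) is a clean chain-level homological identity, and part (b) uses (a) as input to produce the isotopy.

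\textbf{For (a).} I translate the relative homology hypothesis into a chain-level statement. Since $T$ is homologous rel $\partial$ to a surface $T'$ disjoint from $\mathrm{int}(S)$, there exists an integer 3-chain $C$ in $M$ with
\[
\partial C = T - T' + B,
\]
where $B$ is a 2-chain supported in $\partial M$. After a small perturbation one may take $C$ transverse to $S$, so $C \cap S$ is a 2-chain in $S$ with
\[
\partial(C \cap S) = (\partial C) \cap S = (T \cap S) - (T' \cap S) + (B \cap S).
\]
Because $T'$ is disjoint from $\mathrm{int}(S)$, the piece $T' \cap S$ lies in $\partial S$; likewise $B \cap S \subset \partial M \cap S \subset \partial S$. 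Therefore $T \cap S$ bounds $C \cap S$ in $S$ modulo $\partial S$, which is exactly the statement $[T \cap S] = 0$ in $H_1(S, \partial S)$.

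\textbf{For (b).} By (a), I choose a compact oriented subsurface $R \subset S$ with $\partial R \cap \mathrm{int}(S) = T \cap S$ (matching orientations) and the remaining arcs/curves of $\partial R$ contained in $\partial S$. Fix a bicollar $N \cong S \times [-1,1]$ of $S$ in $M$ and realize $T \oplus kS$ by placing $k$ parallel copies $S_i = S \times \{t_i\}$ at distinct levels $t_i \in (-1,1)$ and performing oriented cut-and-paste of $T$ with $\bigcup_i S_i$. Near each component $\lambda$ of $T \cap S$, the sum bends $T$ onto the side of $S$ determined by the orientation of $\lambda$ in $\partial R$. Using $R$ as an oriented cobordism in $S$ from $T \cap S$ to $\partial S$, I construct an ambient isotopy of $T \oplus kS$, supported inside $N$, that drags these bent portions across $R$ to the piece of $\partial N$ over $\partial S$. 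Each such drag ``consumes'' one copy $S_i$ as a free cobordism, so provided $k$ exceeds the number of components of $T \cap S$ (counted with the multiplicity required by $R$), enough copies are available to push every bent portion off. The resulting surface lies on one side of $S$ inside $N$ and is disjoint from $S$. Since the isotopy is the identity outside $N$ and outside $\partial S \times [-1,1]$ on $\partial M$, it is proper.

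\textbf{Main obstacle.} The combinatorics of the unwinding in (b) is the heart of the matter: distinct components of $T \cap S$ may be nested inside $R$, and the slidings must be ordered so as not to recreate intersections already cancelled. The cleanest way to handle this is induction on the number of intersection components using an innermost-curve argument inside $R$ — an innermost component $\lambda$ of $T \cap S$ in $R$ bounds a subsurface $R_0 \subset R$ whose interior is disjoint from the remaining components of $T \cap S$, so the sliding across $R_0$ uses a single $S_i$ and strictly reduces the intersection number. After finitely many steps $T \oplus kS$ is pushed off $S$, provided $k$ was chosen large enough at the outset to supply one $S_i$ for each inductive stage.
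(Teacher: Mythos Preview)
The paper does not give a proof of this lemma; it is quoted from Scharlemann and used as a black box, so there is no in-paper argument to compare against.

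On your argument: part (a) is correct and is the standard chain-level proof.

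Part (b) has a genuine gap. You choose ``a compact oriented subsurface $R \subset S$ with $\partial R \cap \mathrm{int}(S) = T \cap S$ (matching orientations)'', i.e.\ an \emph{embedded} $R$ whose oriented interior boundary is exactly $\Lambda$. Such an $R$ need not exist: if $S$ is a disk and $\Lambda$ consists of two concentric circles oriented the same way, then $[\Lambda]=0$ in $H_1(S,\partial S)$, yet no embedded subsurface of the disk has both circles as oriented boundary components (the boundary circles of an embedded planar region alternate orientation from inside out). What part (a) actually yields is a $2$-chain $\sum_i n_i R_i$ with integer weights on the components $R_i$ of $S\setminus\Lambda$, and it is these weights --- not an embedded $R$ --- that organize the isotopy. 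For the same reason the ``innermost component $\lambda$ bounds a subsurface $R_0\subset R$'' step fails: even when an embedded $R$ does exist, the curves of $\Lambda$ sit on $\partial R$, so a single one of them does not bound a subsurface of $R$.

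The device that works is the weight function itself. Normalize so $\min_i n_i = 0$, set $k \ge \max_i n_i$, and place the $k$ parallel copies of $S$ at distinct heights in a bicollar $S\times[-1,1]$. Since crossing a component of $\Lambda$ changes $n_i$ by exactly $\pm 1$, one can isotope $T\oplus kS$, region by region, so that over each $R_i$ the sheets lie at heights dictated by $n_i$, with the cut-and-paste along $\Lambda$ matching sheets across the unit jumps; the result is disjoint from $S\times\{0\}$. If you want to phrase this as an induction, induct on $\max_i n_i$ (push the top-weight regions down by one level), not on an innermost curve.
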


\begin{lemma}\label{minavoid}Let $z$ be an element in $H_2(M,\partial M;\Z)$ and $\varphi$ be the restriction map $\varphi:H_2(M,\partial M;\Z) \to H_2(\Gamma(z),\partial \Gamma(z);\Z)$. Suppose the image of $u$ under the map $\varphi$ is 0. We can find a properly norm-minimizing surface $\Sigma$ representing $nz+u$ in $H_2(M,\partial M;\Z)$ which avoids the guts of $z$, where $n$ is a sufficiently large positive integer.

\end{lemma}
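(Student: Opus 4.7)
My plan is to combine Lemma~\ref{avoid} with a double curve sum against parallel copies of a properly norm-minimizing representative of $z$. The first point to establish is that $\varphi(z)=0$: a facet surface $F(z)$ lies in $M':=M\setminus\interior\Gamma(z)$ with class $kz$, so $k\varphi(z)=0$; since $H_2(\Gamma(z),\partial\Gamma(z);\Z)\cong H^1(\Gamma(z);\Z)$ is torsion-free by Poincar\'e--Lefschetz duality, this forces $\varphi(z)=0$, and hence $\varphi(nz+u)=0$ for every $n\ge 0$. Fix a single component $S$ of $F(z)$, so $[S]=z$ and $S$ is properly norm-minimizing. Since $F(z)\cap\Gamma(z)=\emptyset$ and $\Gamma(z)$ is closed in $M$, $S$ admits a two-sided bicollar neighborhood $U\subset M\setminus\Gamma(z)\subset M'$, permitting arbitrarily many disjoint parallel copies of $S$ inside $U$.

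Next I would treat the case $u\in\overline{\Delta}$, where $\overline{\Delta}$ denotes the closed Thurston cone over the face containing $z$. Let $T_u$ be a properly norm-minimizing representative of $u$ in $M$. By Lemma~\ref{avoid}, $T_u$ is homologous rel boundary to a surface in $M'$, and in particular to one disjoint from $\interior S$. Lemma~\ref{avoidintersection}(b) then yields, for $\ell$ sufficiently large, a properly isotopic copy of $T_u\oplus \ell S$ disjoint from $S$, which can be arranged to lie in $M'$ by performing the push off $S$ inside the bicollar $U$. Adding $n'$ further parallel copies $S^{(1)},\ldots,S^{(n')}$ of $S$ in $U$, isotoped so that each intersection with the preceding surface is essential in both (using incompressibility of properly norm-minimizing surfaces, cf.~Lemma~\ref{tnms}), I form
\[
\Sigma\ :=\ (T_u\oplus\ell S)\oplus S^{(1)}\oplus\cdots\oplus S^{(n')}\ \subset\ M',
\]
with $[\Sigma]=u+(\ell+n')z$ and, by Lemma~\ref{lem:dcs}, $\chi_-(\Sigma)=x(u)+(\ell+n')x(z)$. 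Setting $n:=\ell+n'$ large enough that $nz+u$ lies in the open Thurston cone over the face of $z$, the Thurston norm $x$ is linear on that cone; using $u\in\overline{\Delta}$ to identify $x(u)$ with the value of this linear extension at $u$, the triangle inequality $x(nz+u)=nx(z)+x(u)$ is sharp, so $\Sigma$ is properly norm-minimizing, with the proper boundary orientation on each torus component inherited from $T_u$ and $S$.

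For general $u$ with $\varphi(u)=0$, I would reduce to the preceding case by a shift: since $z$ lies in the relatively open face $\Delta$ and $(u+mz)/m\to z\in\Delta$ as $m\to\infty$, for $m$ sufficiently large the class $u':=u+mz$ lies in $\overline{\Delta}$, while $\varphi(u')=\varphi(u)+m\varphi(z)=0$. Applying the previous case to $(z,u')$ produces a properly norm-minimizing $\Sigma\subset M'$ representing $n'z+u'=(n'+m)z+u$ for $n'$ large; setting $n:=n'+m$ gives the conclusion. The main obstacle I anticipate is the norm-minimality step, which requires sharpness in the triangle inequality $x(nz+u)=nx(z)+x(u)$; the shift by $mz$ is the key device, exploiting $\varphi(z)=0$ and the linearity of $x$ on Thurston cones to force the relevant class into $\overline{\Delta}$.
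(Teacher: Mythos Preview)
There is a genuine gap in your argument: you never establish that your surface $\Sigma$ avoids $\Gamma(z)$. You only arrange $T_u\oplus\ell S$ to be disjoint from the single component $S$ of $F(z)$. But $M'=M\setminus\interior\Gamma(z)$ contains $S$ (indeed all of $F(z)$) and much more; ``disjoint from $S$'' is far weaker than ``contained in $M'$''. When you write that the isotoped copy ``can be arranged to lie in $M'$ by performing the push off $S$ inside the bicollar $U$'', note that only the portion of $T_u\oplus\ell S$ lying in $U$ is affected by that push---the rest of the surface, sitting in $M\setminus U$, may well meet the guts $\Gamma(z)$, and nothing you have said prevents this. Adding further parallel copies $S^{(i)}\subset U$ does not help: the problematic part of $\Sigma$ is outside $U$.

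The paper's proof confronts exactly this obstacle. It first makes $\Sigma''$ disjoint from the \emph{entire} facet surface $F(z)$ (not just one component), so that $\Sigma''$ lives in $M\spl F(z)$. Even then $\Sigma''$ may intersect $\Gamma(z)$; the paper then analyzes $\Sigma\cap\Gamma^i(z)$ in each guts component as a class in $H_2(\Gamma^i(z),\gamma^i(z))$, shows it is homologous rel $\gamma^i(z)$ to copies of $R_\pm^i(z)$, and invokes the horizontally prime property of the guts (Lemma~\ref{lem:reducedhorizontallyprime}) to conclude these intersections are boundary-parallel and can be isotoped off. This step---exploiting that $\Gamma(z)$ is horizontally prime---is the essential idea your proposal is missing, and without it (or a substitute) there is no reason your $\Sigma$ lies in $M'$. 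A secondary issue: your claim that $\Sigma$ is \emph{properly} norm-minimizing (same boundary orientation on each torus) is not automatic from the construction and would need justification along the lines of Proposition~\ref{prop:maggie}.
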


\begin{proof}
	First, we find an integer $m$ such that $mz +u$ and $z$ lies in the same closed face of Thurston's norm unit ball. Denote $F(z)$ as a facet surface of $z$ and $\Sigma'$ is a properly norm-minimizing surface of $mz+u$ in $M$. By Lemma \ref{avoid}, $\Sigma'$ is homologous to a surface in $M'$ where $M' = M\backslash \interior{\Gamma(z)} $. Hence by Lemma \ref{avoidintersection}, the double curve sum $\Sigma''$ of finitely many copies of $F(z)$ and $\Sigma'$ in the neighborhood of $F(z)$ after an isotopy will be disjoint with $F(z)$. Let $\gamma(z)$ be the sutures of $\Gamma(z)$. Because $\Sigma''$ is disjoint with $F(z)$, $\Sigma''$ intersects $\gamma(z)$ in a union of essential closed curves. Note that $\Sigma''\cap \gamma(z)$ might have components of opposite orientation to the orientation of $\gamma(z)$. However, we can add more copies of $F(z)$ to $\Sigma''$ and for closed curves of different orientation in $\gamma(z)$, we can do a cut-and-paste surgery along the annuli formed by those curves to have a new surface $\Sigma$ such that all closed curves of $\Sigma \cap  \gamma(z)$ are in the same orientation of $\gamma(z)$.

	Then $\varphi([\Sigma])$ is exactly represented by the intersection $\Sigma \cap \Gamma(z)$. We are going to show that $\Sigma \cap \Gamma(z)$ is parallel to some components of $R_+(z)$ and $R_-(z)$. Let $(\Gamma^i(z),R^i_+(z),R^i_-(z),\gamma^i(z))$ be the connected components of $\Gamma(z)$ and $\Sigma^i = \Sigma \cap \Gamma^i(z)$.

By applying the long exact sequence of $(\Gamma^i(z),\partial \Gamma^i(z),\gamma^i(z))$, we have
	$$H_2(\partial \Gamma^i(z),\gamma^i(z)) \to H_2(\Gamma^i(z),\gamma^i(z)) \to H_2(\Gamma^i(z),\partial \Gamma^i(z)). $$
	
	Since $[\Sigma ^i]$ is zero in $H_2(\Gamma(z),\partial \Gamma(z))$, we can find some components $R^i$ (with multiplicities) of $R^i_+(z)$ and $R^i_-(z)$ which is homologous to $\Sigma^i$. Because $\Sigma$ is a properly norm-minimizing surface in $M$, $\Sigma^i$ is taut in $H_2(\Gamma^i(z),\gamma^i(z))$. Since $R^i_+(z)$ and $R^i_-(z)$ are also taut surfaces in $H_2(\Gamma^i(z),\gamma^i(z))$, we have $\chi_-(\Sigma^i) = \chi_-(R^i)$. If we take large enough pieces of  $R_+(z)$ and $R_-(z)$ and replace $R^i$ with $\Sigma^i$, we obtain a taut surface $\Omega^i$ representing a multiple of $[R^i_-]$ in $H_2(\Gamma^i(z),\gamma^i(z))$. By Lemma \ref{lem:reducedhorizontallyprime}, $\Gamma^i(z)$ is horizontally prime. Since $\Omega^i$ consists of horizontal surfaces and $\Sigma^i$ is a union of some components of $\Omega^i$, we know that $\Sigma^i$ is parallel to some components of $R^i_+(z)$ and $R^i_-(z)$ of $\Gamma^i(z)$.

	Therefore, we can isotope $\Sigma^i$ off the guts $\Gamma^i(z)$ for each $i$, i.e., isotope $\Sigma$ off the guts $\Gamma(z)$.
\end{proof}

\begin{lemma} \label{lem:annulus}
	Let $z$ be an element in $H_2(M,\partial M;\Z)$ and $S$ is a properly norm-minimizing surface representing a multiple of $z$. Let $A$ be an essential annulus or an essential disk. If $F(z)$ is a facet surface representing $z$ and containing $S$, we can isotope $A$ such that $A$ is outside of the guts of $F(z)$.
\end{lemma}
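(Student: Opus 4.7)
The plan is to exploit that $A$ has zero Euler characteristic and that $x_M$ is non-degenerate to force $[A]=0$ in $H_2(M,\partial M;\Z)$, and then combine Lemma \ref{avoidintersection} with incompressibility properties of $F(z)$ and reducedness of the guts from Lemma \ref{lem:reducedhorizontallyprime} to push $A$ off $\Gamma(z)$.

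First, since $A$ is an annulus or a disk, $\chi_-(A)=0$, and non-degeneracy of $x_M$ forces $[A]=0\in H_2(M,\partial M;\Z)$. (An essential disk with boundary on the toral $\partial M$ can only exist in a solid torus, which is ruled out by non-degeneracy, so only the annulus case is really substantive.) I then put $A$ in general position with $F(z)$ and isotope to minimize $|A\cap F(z)|$. Since $F(z)$ is properly norm-minimizing it is incompressible and $\partial$-incompressible in $M$; combined with essentiality of $A$, the standard innermost-disk and outermost-arc arguments ensure that every component of $A\cap F(z)$ is essential in both $A$ and $F(z)$. Because $[A]=0$, the surface $A$ is homologous to the empty surface, which is trivially disjoint from $F(z)$, so Lemma \ref{avoidintersection}(a) applies and the class $[A\cap F(z)]$ vanishes in $H_1(F(z),\partial F(z))$.

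In an annulus, essential components of $A\cap F(z)$ are core-parallel circles and spanning arcs, so $A\spl F(z)$ is a disjoint union of sub-annuli (between consecutive parallel circles, or between a circle and a boundary of $A$) and bigon sub-disks (between consecutive spanning arcs). The vanishing of $[A\cap F(z)]$ in $H_1(F(z),\partial F(z))$ forces adjacent intersection curves and arcs to be cancelling pairs in $F(z)$: the two boundary circles of each interior sub-annulus cobound an annulus in $F(z)$, and similarly for each bigon sub-disk. Thus every piece $A_0$ of $A\spl F(z)$ lying in a component $N_i$ of $M\spl F(z)$ is a product annulus or a product disk of $N_i$. If $N_i$ is a window — a product sutured manifold — then $A_0$ is either boundary parallel or a vertical annulus, and in either case it can be isotoped into $\partial_v N_i$ and then across, so that $A$ is pushed off $N_i$. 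If $N_i$ is a guts component, then the existence of such a product annulus or product disk in $N_i$ contradicts the reducedness of $\Gamma(z)$ from Lemma \ref{lem:reducedhorizontallyprime}, unless $A_0$ is boundary-parallel, in which case again it can be isotoped off. Iterating over the finitely many sub-pieces, $A$ is isotoped off every guts component.

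The main obstacle is the pairing step in the third paragraph: ruling out the possibility that some sub-piece of $A$ remains inside a guts component as an honest non-product essential annulus. This is exactly where both pieces of input have to cooperate — the null-homology of $A\cap F(z)$ in $F(z)$ supplied by Lemma \ref{avoidintersection}(a), which forces the cut pieces to be product annuli or disks rather than more complicated surfaces, and the reducedness of $\Gamma(z)$ from Lemma \ref{lem:reducedhorizontallyprime}, which rules out any such product piece surviving inside a guts component.
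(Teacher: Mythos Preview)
There is a genuine gap in the third paragraph. From $[A\cap F(z)]=0$ in $H_1(F(z),\partial F(z))$ you conclude that \emph{consecutive} intersection curves on $A$ cobound annuli in $F(z)$ and hence that every piece $A_0$ of $A\spl F(z)$ is a product annulus or product disk. Neither step is valid. The null-homology supplied by Lemma~\ref{avoidintersection}(a) is a global statement about the sum of all intersection curves; it does not force adjacent curves to cancel pairwise. More seriously, even if two consecutive curves $c_i,c_{i+1}$ on $A$ did cobound an annulus in $F(z)$, this would not make the sub-annulus $A_0$ between them a \emph{product} annulus in $M\spl F(z)$: both boundary circles of $A_0$ may well lie on $R_+$ (or both on $R_-$), making $A_0$ a non-product annulus in the sense of Definition~\ref{def:non-product}. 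Reducedness of the guts (Lemma~\ref{lem:reducedhorizontallyprime}) rules out essential product annuli and disks in $\Gamma(z)$, but says nothing about such non-product pieces, so your argument stops short precisely where the real content lies.

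The paper closes this gap by a different mechanism. When $A_0$ is a non-product annulus with both boundaries on, say, $R_+$, one performs a cut-and-paste on $R_+$ along $A_0$ to obtain a norm-minimizing surface $R'$ homologous to $R_+$; the \emph{maximality} of the facet surface $F(z)$ then forces $R'$ to be parallel into $R_-\cup R_+$, which in turn exhibits $A_0$ as lying in a product region and hence isotopable off the guts. This use of maximality of $F(z)$ is the missing ingredient in your approach; Lemma~\ref{avoidintersection}(a) and reducedness alone do not suffice.
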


\begin{proof}
We isotope $A$ so that $A$ intersects $F(z)$ in minimal number of components. 

When $A$ is a disk, since $F(z)$ is properly norm-minimizing, $F(z)$ intersects $A \cap \partial M$ in the same orientation, and hence $F(z)$ intersects $A$ in the same orientation. So $A\spl F(z)$ is a union of product disks which is outside of the interior of the guts of $F(z)$. Therefore, we can isotope $A$ a little bit to avoid the guts.

When $A$ is an annulus, $A \spl F(z)$ is a union of annuli. Let $A_0$ be a component of $A \spl F(z)$. Suppose $A_0$ is not a product annulus for $M\spl F(z)$ and both boundary curves of $A_0$ lie on the $R_+$ part of $M\spl F(z)$. Then we can do a cut-and-paste along $A_0$ for $R_+$ so that we have a new properly norm-minimizing surface $R'$ after throwing the null-homologous components. Since $F(z)$ is a facet surface, $R'$ is parallel to some components of $R_-$. As a subset of $R'$, $A_0$ can be isotoped to avoid the guts.
\end{proof}

We also need the following lemma: (Cf. \cite[Lemma 0.6]{Ga2} and \cite[Proposition 2.12]{ni2011dehn} )
\begin{lemma} \label{collection} Let $F$ be a compact surface. For any two collections of disjoint loops $C_0$ and $C_1$ in $F$, if $[C_0] = [C_1]$ in $H_1(F,\partial F)$, then there exists a sequence of collections of loops:
	\[
	C_0 = \gamma_0, \gamma_1,\cdots, \gamma_m = C_1
	\]
	and a sequence of embedded surface with no components of disks and compatible with the orientation of $F$
	\[
	W_1, W_1, \cdots,W_{m}
	\]
	such that
	\[
	\overline{\partial W_i \backslash (\partial F)} = \gamma_i \cup (-\gamma_{i-1}).
	\]

\end{lemma}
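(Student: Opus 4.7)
The plan is to realize the homology relation $[C_0]=[C_1]$ by an embedded oriented subsurface of $F$, slice it into elementary pieces via a Morse function, and absorb disk components into adjacent slices. After isotoping $C_0$ and $C_1$ to be disjoint, the hypothesis $[C_0]=[C_1]$ in $H_1(F,\partial F)$ says that the oriented $1$-cycle $C_1-C_0$ is null-homologous rel $\partial F$. In the surface $F$, such a cycle bounds an embedded oriented subsurface $W\subset F$ with orientation inherited from $F$ and with $\overline{\partial W\setminus\partial F}=C_1\cup(-C_0)$; this is the cobordism to be decomposed.

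Next, choose a Morse function $h:W\to[0,m]$ with $h^{-1}(0)=C_0$, $h^{-1}(m)=C_1$, locally constant on $W\cap\partial F$, and with pairwise distinct critical values $c_1<\cdots<c_m$. For regular values $0=s_0<s_1<\cdots<s_m=m$ separating the $c_k$, set $\gamma_i:=h^{-1}(s_i)$ and $W_i:=h^{-1}([s_{i-1},s_i])$. Each $W_i$ is an embedded oriented subsurface of $F$ with $\overline{\partial W_i\setminus\partial F}=\gamma_i\cup(-\gamma_{i-1})$, consisting of product cylinders plus one non-trivial piece determined by the unique critical point it contains: a pair of pants for an index-$1$ critical point, or a disk for index $0$ or $2$. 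If $W_i$ contains a disk component produced by an index-$0$ critical point introducing a new circle $\alpha\subset\gamma_i$, then $\alpha$ either persists into $C_1$ or is eliminated by some later index-$1$ band move or index-$2$ critical point. Using the standard freedom to reorder Morse critical points whose gradient trajectories are disjoint, we arrange that such an index-$0$ disk is immediately consumed in $W_{i+1}$; the merged slice $W_i\cup W_{i+1}$ becomes an annulus or sphere plus cylinders, which has no disk component. Index-$2$ critical points are treated symmetrically by merging $W_i$ into $W_{i-1}$. Components of $C_0$ or $C_1$ that bound disks in $F$ and cannot be canceled with another critical point are handled by inserting an intermediate $\gamma_i$ obtained by a parallel push-off inside $F$, converting the would-be disk into an annulus connecting the loop with its push-off.

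The main obstacle is the Morse-theoretic bookkeeping of the last step: each index-$0$ disk must be matched with the unique index-$1$ or index-$2$ event that absorbs its boundary circle, and these reorderings must be carried out consistently across all such pairs without producing new disk components. Once this is done, the resulting sequence $W_1,\ldots,W_m$ consists entirely of annuli, pairs of pants, and possibly spheres, each embedded in $F$ with orientation inherited from $F$ and satisfying $\overline{\partial W_i\setminus\partial F}=\gamma_i\cup(-\gamma_{i-1})$, producing the required chain $C_0=\gamma_0,\ldots,\gamma_m=C_1$.
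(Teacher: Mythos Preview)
Your overall plan is sound and is essentially the content behind Gabai's Lemma~3.10, which the paper simply cites: realize $C_1-C_0$ as the oriented boundary (rel $\partial F$) of an embedded subsurface $W\subset F$, then slice $W$ by a Morse function. The paper gives no independent argument, so a correct execution of your strategy would be a genuine self-contained proof rather than a citation.

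There is, however, a real gap at the step where you eliminate disk components, and it is not just bookkeeping. Your ``push-off'' fix for a component $\alpha$ of $C_0$ or $C_1$ that bounds a disk does not work: replacing the disk bounded by $\alpha$ with the annulus between $\alpha$ and a parallel $\alpha'$ forces $\alpha'\in\gamma_{i\mp 1}$ with the same orientation convention, and $\alpha'$ now bounds a smaller disk. You have only deferred the problem one step. In fact the lemma as stated is false when $C_0$ or $C_1$ contain inessential loops. Take $F$ a disk, $C_0=\emptyset$, and $C_1=\{\alpha\}$ with $\alpha=\partial D$ for a disk $D\subset\operatorname{int}(F)$. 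For any point $p\in\operatorname{int}(F)\setminus\bigcup_i\gamma_i$, the quantity $\delta_p(\gamma_i):=[\gamma_i]\in H_1(F\setminus\{p\},\partial F)\cong\mathbb Z$ satisfies $\delta_p(\gamma_i)-\delta_p(\gamma_{i-1})=[p\in W_i]\ge 0$, since each $W_i$ carries the orientation of $F$. Summing, every $W_i$ lies in $\overline{D}$ and the $W_i$ partition $D$; but then $\sum_i\chi(W_i)=\chi(D)=1$, while each $W_i$ is planar with no disk component, so $\chi(W_i)\le 0$. This contradiction shows no such sequence exists.

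So the push-off device cannot be repaired, and the statement must be read with the implicit hypothesis that the loops in $C_0$ and $C_1$ are essential in $F$ (which is exactly how it is applied in the proof of Theorem~\ref{T2}, where trivial components of $C_\pm$ are removed beforehand). Under that hypothesis, every circle born at an index-$0$ critical point \emph{must} eventually interact with an index-$1$ saddle before reaching $C_1$; your reordering/merging scheme then goes through, though you should argue inductively on the number of index-$0$ and index-$2$ points (merging may produce a larger disk when a saddle joins two trivial circles, but it reduces the count of extrema, so the induction terminates). Once you restrict to essential $C_0,C_1$ and make that induction explicit, your argument is complete and gives the same conclusion as the paper's citation to Gabai.
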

\begin{proof}
Apply \cite[Lemma 3.10]{Ga1} to $C_0 \cup (-C_1)$.
\end{proof}

Now we can start proving Theorem \ref{T2}.

\begin{proof1}

	We are going to prove the theorem by showing the two inclusions.

	First, we show $\textnormal{ker } \varphi \subset V$, which will be proved by showing that the Thurston norm for a small neighborhood of $z$ in $\textnormal{ker } \varphi$ can be represented by an Euler class. Let $F(z)$ be a facet surface of $z$, $M'$ be $ M\backslash \interior{\Gamma(M,F(z))}$ and $X\times I$ be the window with respect to $F(z)$.

We can think of $M' $ as the result of gluing two ends of $F(z) \times [0,1]$ by two ends of $X \times [0,1]$, i.e., 
$$M'\cong F(z) \times [0,1] \sqcup X \times [0,1] / [(f(x),1)\sim (x,0),(g(x),-1)\sim (x,1)]$$
where $f,g$ are two maps from $X$ to $F(z)$.
	
\begin{figure}[bth]

	\begin{center}
		\epsfig{file = 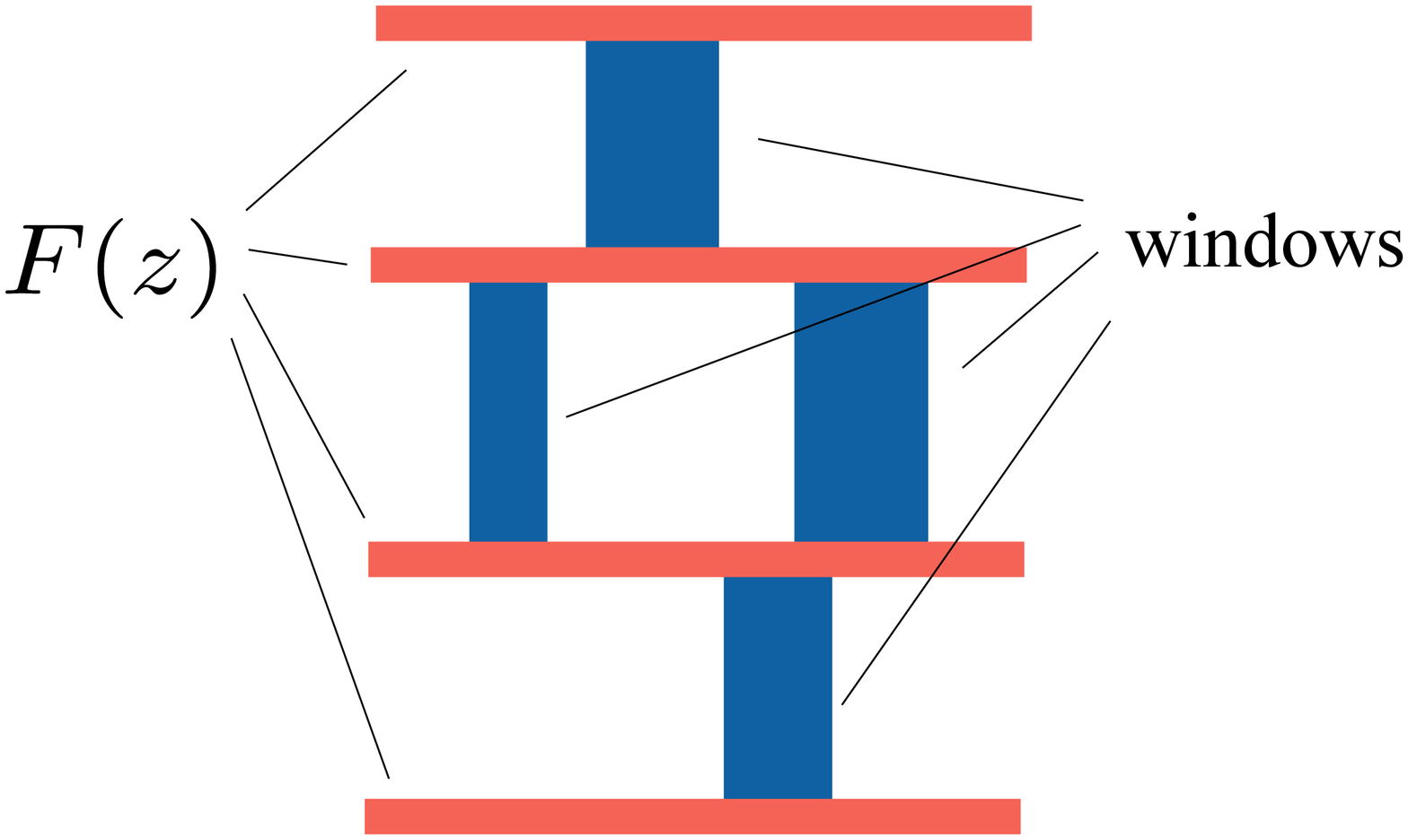, height =.3 \textwidth}
	\end{center}

  \caption{$M' = M\backslash \interior{\Gamma(M,F(z))}$.} 
\end{figure}

Since $u$ is in the kernel of $\varphi$, by lemma \ref{minavoid}, we can find a properly norm-minimizing 
surface $\Sigma$ in $H_2(M,\partial M)$ which represents a multiple of $Nz+u$ such that $\Sigma$ does not intersects $\Gamma(M,F(z))$, i.e., inside $M'$. Besides, $Nz+u$ and $z$ lie in a closed Thurston cone.

Since $M'$ is a union of product sutured manifolds, i.e., $F(z) \times [0,1]$ and windows $X \times I$, it has a natural 1-codimensional fibration $\mathscr{F}$. We fix a Riemannian metric on $M'$ such that the normal vector field $n(\mathscr{F})$ of $\mathscr{F}$ follows the direction of the intervals. This normal vector field will be used in the following proof. 

Here we consider $\partial M =\emptyset$ first. We isotope $\Sigma$ such that $\Sigma \cap X\times [0,1] = C \times [0,1]$, where $C$ is a collection of closed curves because $\Sigma$ avoids the guts $\Gamma(M,F(z))$. In the following we will write $F(z)$ as $F$.

Then $\Sigma_0 \stackrel{\Delta}{=} \Sigma \cap (F\times [0,1]) $ has boundary $ (C_-\times \{0\}) \cup (C_+\times \{1\}) $. If a component $C_i$ of $C_-$ or $C_+$ is trivial in $F$, i.e. there is a disk $D$ in $F$ whose boundary is $C_i$. Then by the incompressibility of $\Sigma$, $C_i$ bounds a disk $D'$ in $\Sigma$. We can isotope $D'$ so that we remove $C_i$ from the intersection of $\Sigma$ and $F\times \{0,1\}$. Hence in the following, we assume no component of $C_-$ and $C_+$ is trivial. We are going to construct a new surface $\Omega_0$ in $F\times [0,1]$ such that $\partial \Omega_0 = \partial \Sigma_0$ and $[\Omega_0 - \Sigma_0] = 0$ in $H_2(F\times [0,1])$.

Since $C_- \times \{0\} \cup C_+ \times \{1\}$ bounds $\Sigma_0$, $[C_- \times \{0\}] - [C_+ \times \{1\}] = 0$ in $H_1(F\times [0,1])$, and hence by projection, $[C_-] = [C_+]$ in $H_1(F)$.

We first isotope $\Sigma_0$ in $F \times I$ such that the intersection of $\Sigma_0$ and $F \times [0,1/2]$ is $C_-\times[0,1/2]$. Then we will do the surgery used in Gabai's paper \cite[Lemma 3.11]{Ga1}. Let W be a subsurface of $F$ such that $\partial W = \gamma' \cup (-\gamma)$ where $\gamma$ is $C_-$ and $\gamma'$ is disjoint with $C_-$. Then the double curve sum $\Sigma_1 = \Sigma_0\oplus F\times (1/4)$ can be isotoped slightly so that $\Sigma_1$ intersect $F\times (1/4)$ at  $\gamma'$. This can be seen by performing the surgery in two steps. First do the surgery along the curves $\gamma$, and isotope the resulting surface $P$ slightly so that $P \cap F\times (1/4) = \gamma'$. Finally do the surgery along the remaining curves to get $\Sigma_1$. If the surgery obtains some spheres or disks, then by the incompressibility of $F$ and $\Sigma_0$, there exist 2 disks $D_1 \in F$ and $D_2 \in \Sigma_0$ such that $D_1\cup D_2$ is a sphere. Because $F$ is the unique norm-minimizing surface in $F\times S^1$ up to isotopy, we know that they bound a 3-cell. Hence we can isotope $\Sigma_0$ vertically to avoid popping out disks or spheres. 

 It now follows from Lemma \ref{collection} that by surgering $\Sigma_0$ and $m$ copies of $F$ we can obtain a new surface (which we still denote as $\Sigma_0$) satisfying the following properties:

1. the normal vector of $\Sigma_0 \cap F\times [0,1/2]$ always has an angle less than or equal to $\pi/2$ with the normal vector $n(\mathscr{F})$ induced from $[0,1]$;

2. $\Sigma_0 \cap F\times [1/2,1]$ will have the same boundary on each sides, i.e. $\Sigma_0 \cap F\times 1/2 =\Sigma_0 \cap F\times 1 = C_+$.

Now we try to replace $\Sigma^* =\Sigma_0 \cap F\times [1/2,1]$ out of $\Sigma$ with some good-behaving surface. Denote $F\times \{1/2\}$ and $F\times \{1\}$ as $F_-$ and $F_+$ respectively. Glue two boundaries of $F \times [1/2,1]$ together and then we get a properly embedded surface $\Sigma'$ in $F \times S^1$. We can stabilize $\Sigma'$ by several pieces of $F_-$ and $F_+$ such that the homology class of $\Sigma'$ in $H_2(F\times S^1)$ is represented by some oriented components of $F$ and vertical tori. Hence we can construct a properly norm-minimizing surface $\Omega'$ by taking the double curve sum of some oriented components of $F$ and vertical tori. Now we decompose $\Omega'$ along the $F_-=F_+$ to obtain a surface $\Omega^*$ having the same boundary as $\Sigma^*$. Hence, $\Omega^* \cup -\Sigma^*$ is a closed surface and $[\Omega^* \cup -\Sigma^*] = 0$ in $H_2(F\times S^1)$, which is therefore 0 in $H_2(F\times [1/2,1],F_-\cup F_+)$. From the exact sequence

\[
H_2(F_-\cup F_+) \to H_2(F\times [1/2,1]) \to H_2(F\times [1/2,1],F_-\cup F_+)
\]
we know that we can stabilize $\Sigma^*$ and $\Omega^*$ with $F_-$ and $ F_+$ such that $[\Omega^* \cup -\Sigma^*]$ is 0 in $H_2(F\times [1/2,1])$. 

We replace $\Sigma^*$ out of $\Sigma$ with $\Omega^*$ to get a new surface $\Omega$. We want to show $\chi_-(\Sigma^*) = \chi_-(\Omega^*)$. If this is true, $\Sigma$ and $\Omega$ have the same homology class and the same Thurston norm. For one direction, since $\Sigma$ is a properly norm-minimizing surface in $M$, $\chi_-(\Sigma^*)$ is smaller than or equal to $\chi_-(\Omega^*)$, otherwise $\chi_-(\Omega) < \chi_-(\Sigma)$. For the other direction, this comes from $\Omega'$ is a properly norm-minimizing surface in $H_2(F\times S^1)$.

In short, $\Omega$ is a properly norm-minimizing surface for some $n'z+u$, where $n'$ is a positive integer. Now we are going to show the Thurston norm of $z$ and $n'z+u$ is represented by a fixed Euler class independent of $u$. 

By the construction of $\Omega$, we know that the angle between $\Omega$ and $\mathscr{F}$ is always less than or equal to $\pi/2$.

	Hence by \cite[Lemma 10.5.7]{CC2}, we know that the 2-plane bundle $T\Omega$ is isomorphic to $\mathscr{F}|\Omega$, which means
	\[
	\chi_-(\Omega) = -e(\mathscr{F})(\Omega)
	\]
	as well as
	\[
	\chi_-(F(z)) = -e(\mathscr{F})(F(z))
	\]

	Thus we know that 
		\begin{eqnarray*}
		x(z) &=& -e(\mathscr{F})(z) \\
		x(n'z+u) &=& -e(\mathscr{F})(n'z+u)
	\end{eqnarray*}
	
	Since $u$ is in arbitrary direction, the Thurston norm of a small neighborhood of $z$ in $\textnormal{ker} \varphi$ can be represented by $-e(\mathscr{F})$, which means they lie in the same open face of the Thurston's norm unit ball. Hence $\textnormal{ker} \varphi \subset V$.

When $\partial M \ne \emptyset$, we consider $H_2(F\times[-1,1], \partial F \times [-1,1])$ and can show the same result.

	Next, suppose there is a neighborhood of $z$ in $\Delta$ which does not contain two elements whose restrictions on a boundary component are in opposite orientations. We want to show $V \subset \textnormal{ker} \varphi$. In order to prove this, for any direction in $V$, we take $u$ small enough such that $z$, $z-u$ and $z+u$ lie in the same Thurston cone and the restrictions of $z-u$ and $z+u$ on each boundary component are in the same orientation. We can multiply $z$ and $u$ by an integer such that $z$, $z-u$ and $z+u$ are in $H_2(M,\partial M;\Z)$ and we still use $z$, $z-u$ and $z+u$ to represent these integral elements.

	We take properly norm-minimizing surfaces $S_{z+u}$ and $S_{z-u}$ for ${z+u}$ and ${z-u}$, respectively from Proposition \ref{prop:maggie}, and construct the double curve sum $S_{z+u}\oplus S_{z-u}$ of $S_{z+u}$ and $S_{z-u}$ which represents $2z$ in $H_2(M,\partial M;\Z)$. We denote the union of the essential annuli and the essential disks which come from the double curve sum as $A$. By Proposition \ref{prop:maggie}, $S_{z+u}\oplus S_{z-u}$ is a properly norm-minimizing surface, and hence we can take a facet surface $F(z)$ of $z$ which contains $S_{z+u}\oplus S_{z-u}$. By Lemma \ref{lem:annulus}, we know that $A$ avoids the guts of $F(z)$. Because $S_{z+u}$ and $S_{z-u}$ lie in the neighborhood of $S_{z+u}\oplus S_{z-u} \cup A$, $S_{z+u}$ and $S_{z-u}$ can be isotoped to avoid the guts of $F(z)$. Thus by lemma \ref{avoid}, the image of $z+u$ and $z-u$ under $\varphi$ is zero, which means $\varphi(u) = 0$ and $\varphi(z) = 0$.

\qed	
\end{proof1}

\begin{example}
	Denote $\Sigma_2$ as a genus 2 closed surface and consider the trivial circle bundle $\Sigma_2 \times S^1$. Then $H_2(\Sigma_2 \times S^1;\R)$ is of rank 5 and the Thurston sphere degenerates to 2 parallel hyperplanes. Then for a non-separating closed curve $c_0$ in $\Sigma_2$, its fiber bundle $c_0\times S^1$ is a $T^2$ which has 0 Thurston norm. Denote $z$ as the homology class of this $T^2$ in $H_2(\Sigma_2 \times S^1;\R)$. There are 2 non-separating closed curves $c_1,c_2$ such that $[c_1\cup c_2] =[c_0]$ in $H_1(\Sigma_2)$ which means $(c_1\cup c_2)\times S^1$ is homologous to $c_0\times S^1$. If we decompose along $c_0\times S^1$ and $(c_1\cup c_2)\times S^1$, we have 2 pairs of pants times $S^1$. Then by decomposing along some product annuli, we will have two solid tori with 4 longitudinal sutures where the longitudes are the $S^1$ fibers. Hence the rank of $H_1(\Gamma(z))\to H_1(\Sigma_2 \times S^1)$ is 1. 
\end{example}

\begin{corollary}[{Cf. \cite[Lemma 4.1]{Ag1}}]
Let $M$ be a compact irreducible orientable 3-manifold with toral boundary and non-degenerate Thurston norm. Let $$z \in H^1(M) \cong H_2(M,\partial M)$$ be in a top dimensional open Thurston cone and there is a neighborhood of $z$ which does not contain two elements whose restrictions on a boundary component are in opposite orientations.

Then we have
$$
H^1(M;\Z) \to H^1(\Gamma(z);\Z)
$$
is 0, i.e.
$$
H_1(\Gamma(z);\Z) \to H_1(M;\Z)/\textnormal{torsion}$$
is 0.	
	
\end{corollary}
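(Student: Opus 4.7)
The plan is to deduce this corollary directly from Theorem \ref{T2} by specializing to the codimension-zero case and then translating via Poincar\'e--Lefschetz duality. Since $z$ lies on a top dimensional open Thurston cone $\Delta$, the codimension is $k=0$ and the subspace $V$ spanned by $\Delta$ is all of $H_2(M,\partial M;\R)$. The hypothesis about no two elements in a neighborhood having restrictions in opposite orientations on a boundary component is exactly what the second clause of Theorem \ref{T2} requires, so we may conclude that $\ker\varphi=V=H_2(M,\partial M;\R)$ and the rank of the image of
\[ \varphi: H_2(M,\partial M;\Z)\to H_2(\Gamma(z),\partial \Gamma(z);\Z) \]
equals $k=0$.

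Next I would invoke Poincar\'e--Lefschetz duality: for $M$ with toral boundary, $H^1(M;\Z)\cong H_2(M,\partial M;\Z)$, and similarly $H^1(\Gamma(z);\Z)\cong H_2(\Gamma(z),\partial \Gamma(z);\Z)$. Under this identification, $\varphi$ corresponds to the inclusion-induced restriction map $H^1(M;\Z)\to H^1(\Gamma(z);\Z)$. Because both groups are free abelian (each is $\mathrm{Hom}(H_1(-;\Z),\Z)$ by universal coefficients, since $H_0$ is torsion-free), a homomorphism whose image has rank zero must be identically zero. This delivers the first conclusion.

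For the equivalent $H_1$ statement, I would dualize using the natural pairing: the restriction map on $H^1$ is precisely the $\Z$-dual of the inclusion-induced map
\[ H_1(\Gamma(z);\Z)/\mathrm{torsion}\to H_1(M;\Z)/\mathrm{torsion}.\]
For a homomorphism between finitely generated free abelian groups, vanishing and dual vanishing are equivalent, so the map on $H_1$ modulo torsion is zero, which in turn is equivalent to the composite $H_1(\Gamma(z);\Z)\to H_1(M;\Z)\to H_1(M;\Z)/\mathrm{torsion}$ being zero.

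There is essentially no obstacle beyond careful bookkeeping: the main point is matching the top-dimensional cone to the $k=0$ case of Theorem \ref{T2}, and then checking that the dualities behave correctly in the presence of toral boundary. The substance of the argument is entirely carried by Theorem \ref{T2}.
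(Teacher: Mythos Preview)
Your proposal is correct and follows exactly the approach the paper intends: the corollary is stated without proof immediately after Theorem \ref{T2}, so it is meant to be read as the specialization $k=0$ of that theorem (note that Theorem \ref{T2} itself already records the $H^1$ formulation of the rank statement). Your additional bookkeeping---that $H^1(-;\Z)$ is torsion-free so a rank-zero homomorphism is identically zero, and the dualization to the $H_1/\textnormal{torsion}$ statement---is precisely the routine algebra needed to pass from ``rank of the image is $0$'' to the stated conclusions.
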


\section{Guts and Sutured Decomposition}\label{sec:guts:decomposition}

\begin{namedthm*}{Theorem \ref{T3}}
Let $M$ be an irreducible connected 3-manifold with toral boundary and non-degenerate Thurston norm.	 Let $z$ be an element on a $k$-codimensional $(k>0)$ open Thurston face of the Thurston norm. For any choice of $u$ in any $k'$-codimensional $(k' < k)$ Thurston face whose closure contains $z$, there exists a $w$ on the open segment $(z,u)$ such that $\Gamma(z)$ can be nontrivially decomposed along a properly norm-minimizing surface representing $w$.
\end{namedthm*}

Before proving this theorem, we state a slight generalization of \cite[Theorem 4.1]{FK}.

\begin{definition}
	Given two non-zero cohomology classes $\phi,\psi \in H_2(M,\partial M;\R)$, we say $\phi$ is \emph{subordinate} to $\psi$ if $\phi \in C$ where $C$ is the smallest closed Thurston cone which contains $\psi$.
\end{definition}
\begin{theorem} \label{FK}
Let $M$ be an irreducible connected 3-manifold with toral boundary and let $R$ be a properly norm-minimizing surface. Then for any choice of product decomposition surface (maximal collection of product annuli and product disks) for $M \backslash R\times(-4,4)$ and any choice of $\psi\in H_2(M,\partial M;\Z)$ there exists an $m \in \N$ and a surface $F$ with the following properties:

\begin{enumerate}
	\item $[R]$ is subordinate to $m[R] + \psi$ and $F$ represents $m[R]+\psi$;
	\item $F \oplus(R\times \{-3\}\cup R\times \{3\})$ is a properly norm-minimizing surface;
	\item the intersections $F \cap R \times [-4,-2]$ and $F \cap R \times [2,4]$ are product surfaces;
	\item if $X$ is a gut or a window of $M\backslash R\times(-4,4)$ then $F \cap X$ is a decomposition surface.

\end{enumerate}

\end{theorem}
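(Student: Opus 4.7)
The plan is to take a properly norm-minimizing representative $\Sigma$ of $\psi$ and form its double curve sum with many parallel copies of $R$ placed in the collar $R \times (-4,4)$, following the strategy of \cite[Theorem~4.1]{FK} while tracking compatibility with the prescribed product decomposition surface and the collar parametrization.

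First, I would isotope $\Sigma$ into general position with respect to the level surfaces $R \times \{t\}$, with the product decomposition surface $J$ for $M \spl (R \times (-4,4))$, and with the sutures. Using the incompressibility of $R$ (which is properly norm-minimizing) and of $\Sigma$, any component of $\Sigma \cap (R \times \{t\})$ that bounds a disk on $R$ bounds a disk on $\Sigma$ as well and can be removed by an innermost-disk isotopy; the same applies to $\Sigma \cap J$. After this cleanup every curve of $\Sigma \cap (R \times \{t\})$ is essential in both surfaces, so Lemma~\ref{lem:dcs} will apply to any subsequent double curve sum with copies of $R$. I would then vertically straighten $\Sigma$ inside $R \times [-4,-2]$ and $R \times [2,4]$, replacing each slice $\Sigma \cap R \times \{t\}$ by the product extension of $\Sigma \cap R \times \{\pm 2\}$; this is possible because every intersection curve is essential on $R$ and these regions are products.

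Next, choose $m$ large, place $m$ parallel copies $R_1,\ldots,R_m$ of $R$ inside the collar at levels distinct from $\pm 3$, form $F_0 = \Sigma \oplus R_1 \oplus \cdots \oplus R_m$, and drop null-homologous components to obtain $F$. Property~(3) holds because the vertical straightening of $\Sigma$ is preserved under double curve sum with level copies of $R$. For Property~(1), as $m \to \infty$ the ray $m[R]+\psi$ projectively approaches $[R]$, and because Thurston faces are open, $m[R]+\psi$ lies in the smallest closed face containing $[R]$ for all sufficiently large $m$. For Property~(2), apply Proposition~\ref{prop:maggie} to $m[R]+\psi$ and $2[R]$, which lie in a common closed face by Property~(1) with non-opposite boundary restrictions for large $m$; together with Lemma~\ref{lem:dcs} this gives $\chi_-\!\left(F \oplus (R \times \{-3\} \cup R \times \{3\})\right) = (m+2)\, x([R]) + x(\psi) = x((m+2)[R]+\psi)$.

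The main obstacle will be Property~(4), which asks that $F \cap X$ be a decomposition surface for each gut or window $X$ of $M \spl (R \times (-4,4))$. The boundary $\partial(F \cap X)$ lives on the sutured boundary of $X$, which consists of portions of $\partial M$, copies of $R$ produced by cutting the collar, and copies of product disks and annuli coming from decomposing along $J$. On a window, which is itself a product, the general-position cleanup already makes $F \cap X$ into nonseparating arcs or vertical product annuli, meeting clauses~(1) and~(3) of Definition~\ref{def:decompositionsurface} automatically. On a gut, each intersection curve of $F$ with an annular suture arising from $J$ is homologously $m$ times the intersection class of $R$ with that annulus plus a bounded correction from $\Sigma$; for $m$ large this forces the curves into the suture class, and any residual non-admissible arcs can be cleared by a final cut-and-paste with additional copies of components of $J$. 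The no-disk and coherent-orientation conditions on toral components then follow from the fact that $F$ is properly norm-minimizing by Property~(2), so that no compressible disks or opposingly oriented torus boundaries can arise. Matching all three alternatives of Definition~\ref{def:decompositionsurface} simultaneously on every gut and window is the delicate step, and taking $m$ sufficiently large at the very end is essential to bury the uncontrolled $\psi$-contribution beneath the large $mR$ contribution.
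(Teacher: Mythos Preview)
The paper does not give a proof of Theorem~\ref{FK}; it is simply stated as a slight generalization of \cite[Theorem~4.1]{FK} and used as a black box in the proof of Theorem~\ref{T3}. So there is no paper proof to compare against, and the relevant question is whether your sketch actually reproduces the Friedl--Kitayama argument.

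There is a genuine gap in your treatment of Property~(2). You build $F$ as (essentially) $\Sigma\oplus mR$ with $\Sigma$ a properly norm-minimizing representative of $\psi$, and then assert
\[
\chi_-\bigl(F\oplus(R\times\{-3\}\cup R\times\{3\})\bigr)=(m+2)\,x([R])+x(\psi)=x\bigl((m+2)[R]+\psi\bigr).
\]
The second equality requires $[R]$ and $\psi$ to lie in a common closed Thurston cone, but $\psi\in H_2(M,\partial M;\Z)$ is arbitrary in the statement, so in general you only have the triangle inequality $x((m+2)[R]+\psi)\le (m+2)x([R])+x(\psi)$, with strict inequality possible. Consequently $\Sigma\oplus mR$ need not be norm-minimizing for $m[R]+\psi$, and your $F\oplus 2R$ need not be properly norm-minimizing. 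Invoking Proposition~\ref{prop:maggie} does not rescue this: that proposition \emph{constructs} particular representatives $S_1,S_2$ with the good double-curve-sum property, it does not certify that an already-chosen pair of surfaces has it.

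The Friedl--Kitayama argument avoids this trap by not starting from a norm-minimizing surface for $\psi$ alone. Their construction is carried out inside the sutured manifold $M\setminus R\times(-4,4)$ using Gabai's theory (well-groomed surfaces and the sutured Thurston norm), which simultaneously forces the resulting surface to be taut in the cut-open manifold and to satisfy the decomposition-surface condition~(4) on each gut and window. Your handling of Property~(4) is also too informal: asserting that ``for $m$ large the curves are forced into the suture class'' does not address the possibility that $\partial\Sigma$ and $\partial R$ meet a torus component of $\partial M$ in genuinely different slopes, nor does it control how $F$ meets the annular sutures coming from the product decomposition surface $J$; these are exactly the points where the well-groomed machinery is needed.
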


\begin{proof2}
 By theorem \ref{T2}, $u$ is not in the kernel of the map $\varphi: H^1(M) \to H^1(\Gamma(z))$. Let $F(z)$ be a facet surface for $z$ and fix a product decomposition surface for $M\backslash F(z)\times (-4,4)$.

From Theorem \ref{FK}, we construct a surface $\Sigma$ which represents $mz + u$ such that $\Sigma$ satisfies the 4 properties. So if we decompose $M \spl (F(z) \times \{-3\} \cup F(z) \times \{3\})$ along $\Sigma$, we actually do a sutured decomposition of $M$ along a properly norm-minimizing surface $\Omega \stackrel{\Delta}{=} \Sigma \oplus (F(z) \times \{-3\} \cup F(z) \times \{3\})$ representing some $nz+u$. Since $u$ is not in the kernel of $\varphi$, we have
$$
 [\Sigma \cap \Gamma(z)] \ne 0 \in H_2(\Gamma(z),\partial \Gamma(z);\Z) .
 $$
By Theorem \ref{thm:complexityfunction}, the complexity of $\Gamma(z)\backslash \Sigma$ is smaller than $\Gamma(z)$.

Then $mz+u$ is the $w$ that we want in the statement.
\qed
\end{proof2}

\section{Examples of Guts} \label{sec:guts:example}

In this section, we give some example of guts in some typical manifolds and define the guts of knots.

We show an example of guts as solid tori with 2 sutures and $T\times I$ with 2 sutures.

\begin{example} \label{ex:Book of I-bundles} 	\textbf{Book of I-bundles}. Denote $\Sigma$ as a compact oriented surface and consider $\Sigma \times S^1$. Let $x_1,\cdots,x_k$ be points in $S^1$ and $\gamma_i$ be an essential closed curve in $\Sigma \times \{x_i\}$ for $1\le i \le k$. Let $\eta(\gamma_i)$ be the tubular neighborhood of $\gamma_i$, $m_i$ be the meridian on $\partial \eta(\gamma_i)$ which bounds a disk in $\eta(\gamma_i)$ and $l_i$ be the longitude which is a component of $(\Sigma \times \{x_i\})\cap \partial \eta(\gamma_i) $ such that $\langle m_i,l_i\rangle = 1$. A slope on $\partial \eta(\gamma_i)$ is of the form $\pm(q_i m_i + p_i l_i )$ and is hence determined by the rational number $\frac{q_i}{p_i} \in \Q \cup \{\infty\}. $

	We do a $(q_i/p_i)$-Dehn surgery for each $\gamma_i \times \{x_i\}$ where $|q_i| \ge 2$. If we decompose this manifold along a maximal collection of horizontal surfaces and then decompose along some product annuli that separate $\gamma_i$ from product sutured manifolds, we have $k$ sutured solid tori as guts components.  
	
	Let $m'_i$ be the meridian of the $i$-th sutured solid torus which is represented by $q_i m_i + p_i l_i$ and $l'_i = r_i m_i +s_i l_i$ be a longitude satisfying $\langle m'_i,l'_i\rangle = 1$ which is equivalent to $qs-pr =1$. Since $\langle  m'_i, l_i\rangle  = q_i$ and $\langle  l'_i, l_i\rangle  = r_i$, we have $l_i = -r_i m'_i + q_i l'_i $. Hence the $i$-th solid torus has two sutures with slope of $-r_i/q_i$. 
	
	If we do drillings along $\gamma_i$ instead of Dehn surgery, we will have $k$ guts which is a $T\times I$ with 2 sutures on a boundary component and the other component as a sutured torus.
\end{example}

We also show some examples of guts in links complements.

\begin{figure}[hbt]
\begin{subfigure}[t]{.5\textwidth}%\centering
	\begin{center}
  \includegraphics[height =.6 \textwidth]{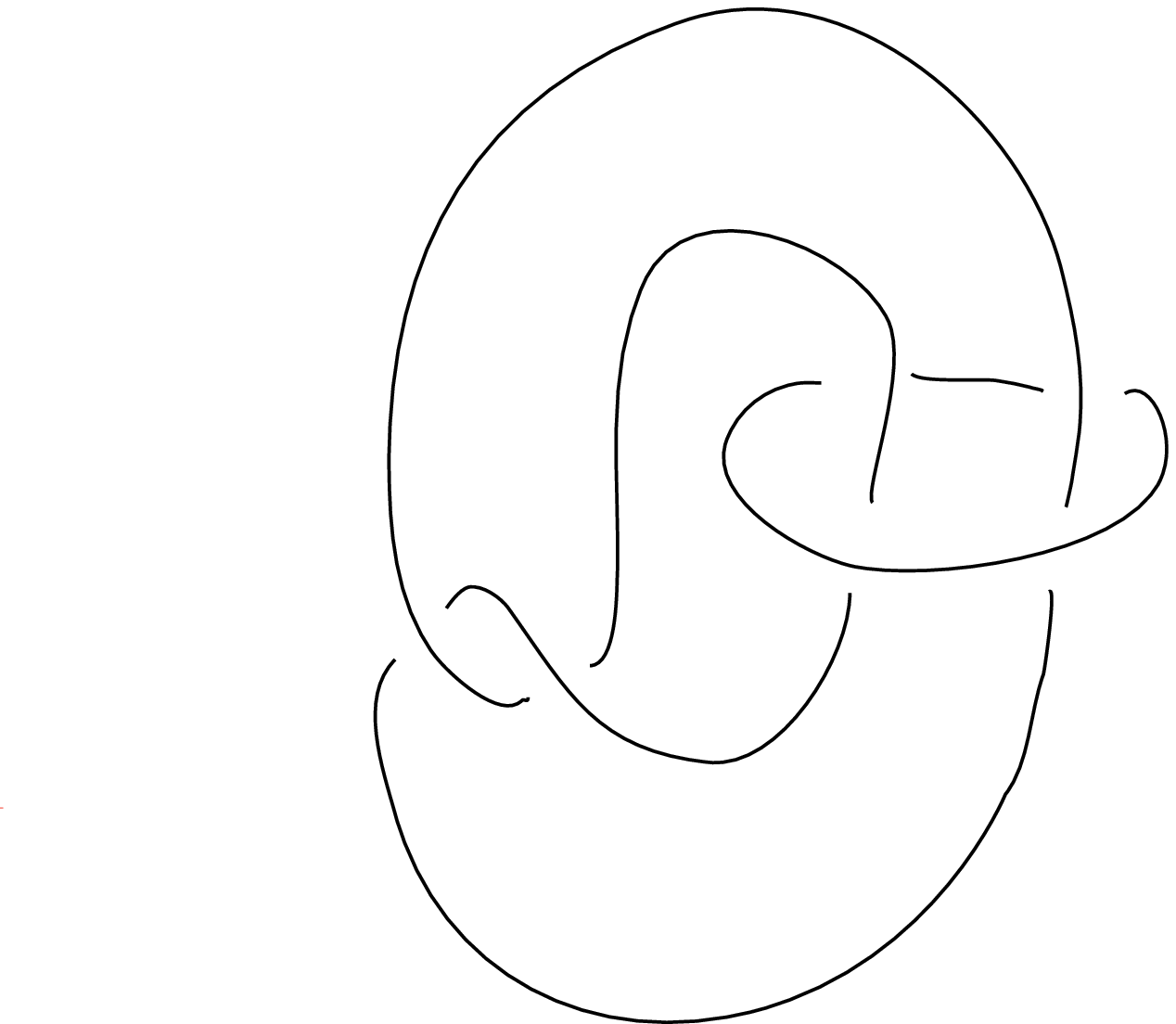}
  \caption{}
\end{center}
  \end{subfigure}%
  \hfill
  \begin{subfigure}[t]{.5\textwidth}%\centering
  	\begin{center}
  \includegraphics[height =.6 \textwidth]{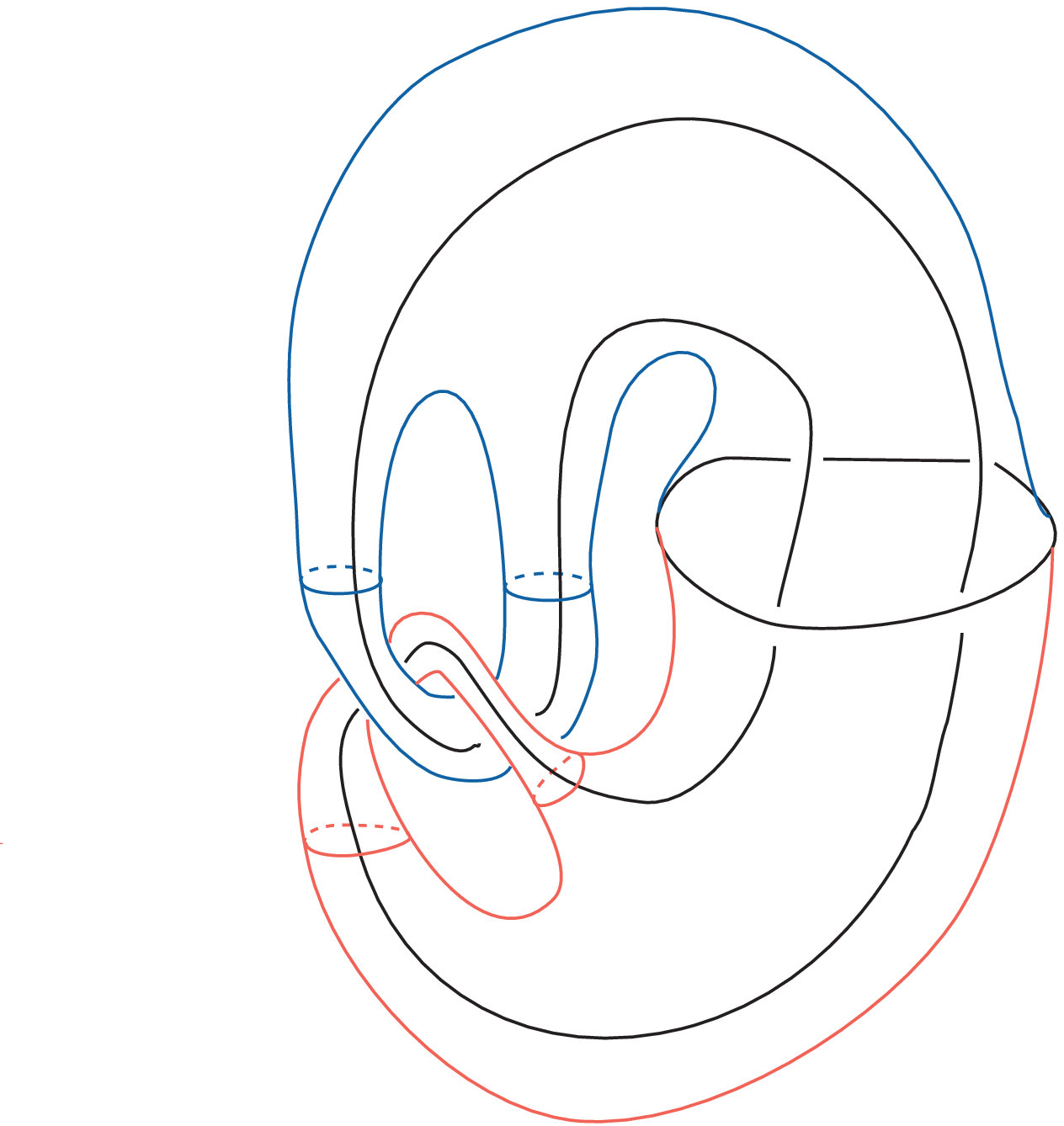}
  \caption{}
\end{center}
  \end{subfigure}%
  \caption{The Whitehead links.} \label{whi}
\end{figure}

\begin{example}
	\textbf{Whitehead links.} We consider the complement $N$ of the Whitehead links. In Figure \ref{whi}, there are 2 Seifert surfaces which are properly norm-minimizing surfaces representing the homology class related to one link component. The blue and red Seifert surfaces are parallel and the blue surface is a facet surface of this link component. Then decomposing along the annuli in orange in the left of Figure \ref{fig:T*I}, we know that the guts of this facet surface is a $T\times I$ with 2 sutures on a boundary component and the other component as a sutured torus.
	
\end{example}

\begin{figure}[hbt]
\begin{subfigure}[t]{.5\textwidth}%\centering
	\begin{center}
  \includegraphics[height=.7 \textwidth]{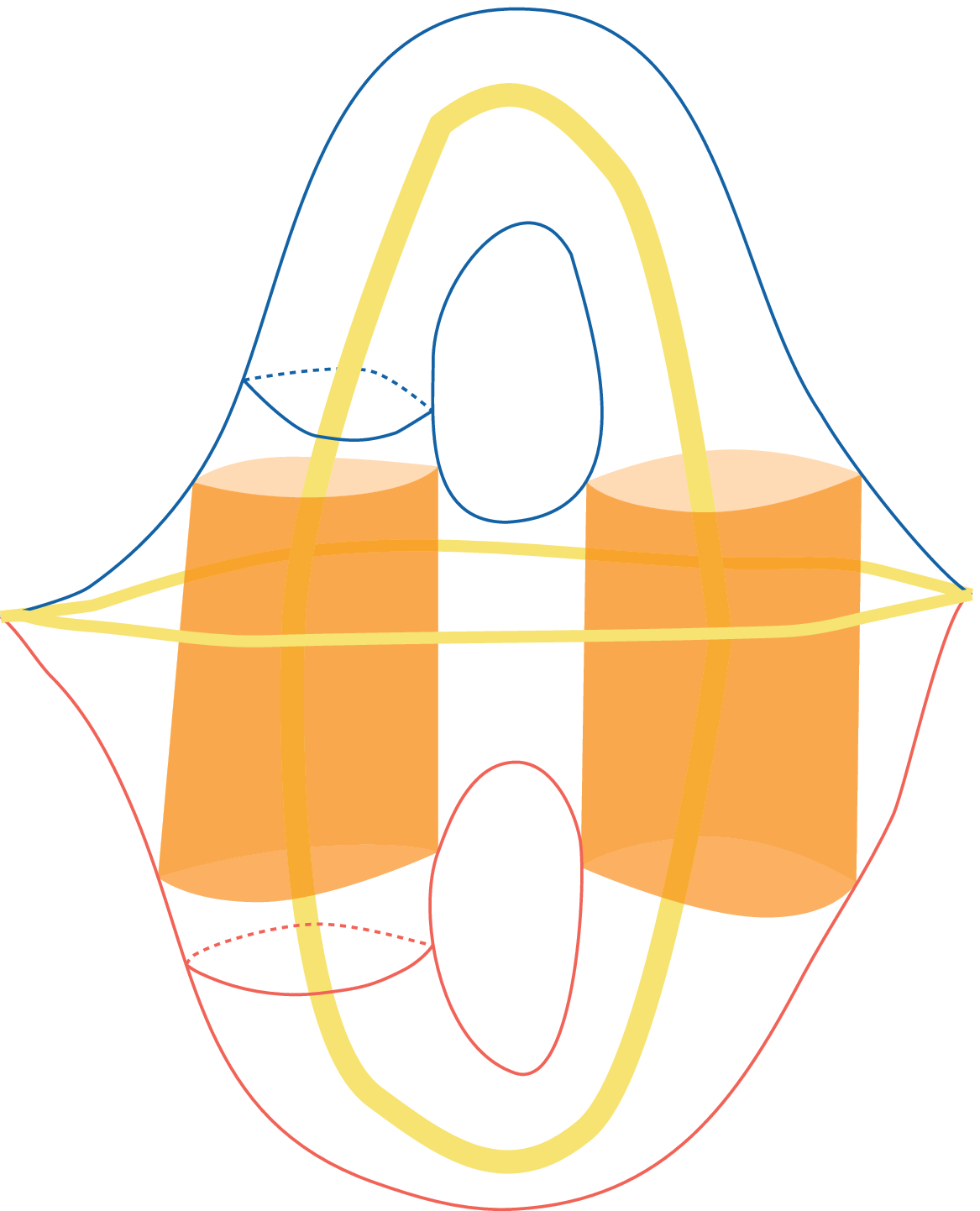}
  \caption{}
\end{center}
  \end{subfigure}%
  \hfill
  \begin{subfigure}[t]{.5\textwidth}%\centering
  	\begin{center}
  \includegraphics[height=.7 \textwidth]{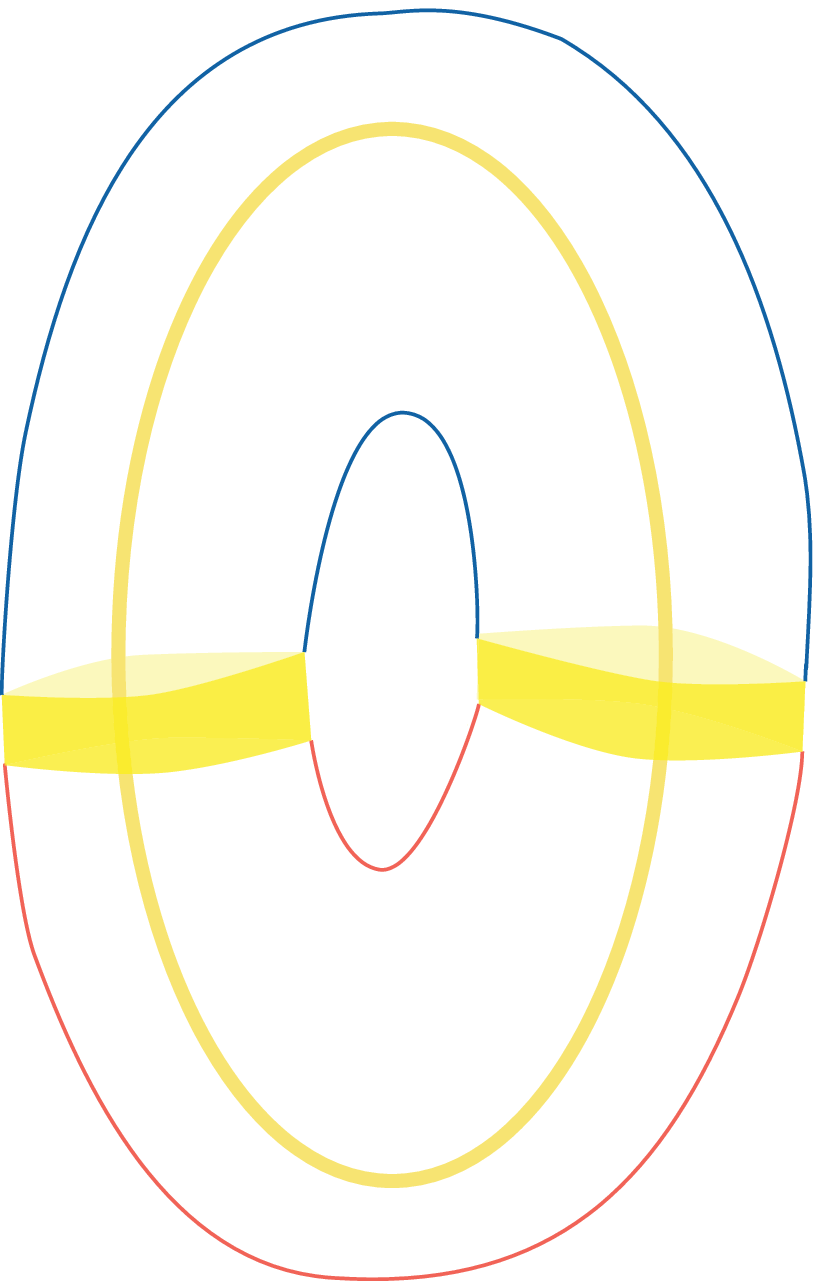}
  \caption{}
\end{center}
  \end{subfigure}%
	\caption{A guts component in the complement of the Whitehead links as well as of the Borromean Rings.} \label{fig:T*I}
\end{figure}

\begin{figure}[hbt]
\begin{subfigure}[t]{.45\textwidth}%\centering
	\begin{center}
  \includegraphics[height=.6 \textwidth]{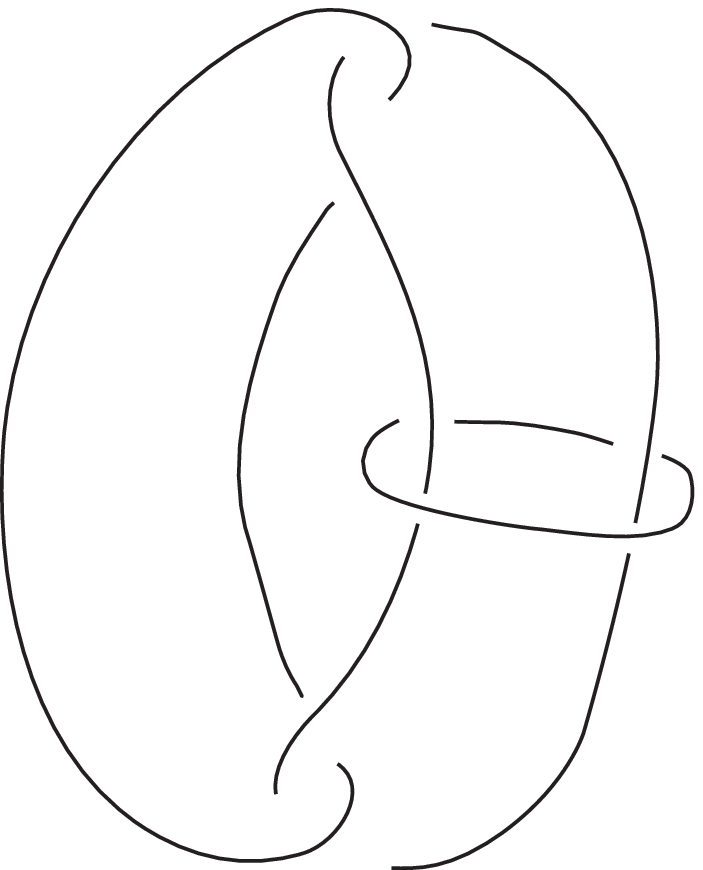}
  \caption{}
\end{center}
  \end{subfigure}%
  \hfill
  \begin{subfigure}[t]{.45\textwidth}%\centering
  	\begin{center}
  \includegraphics[height=.6 \textwidth]{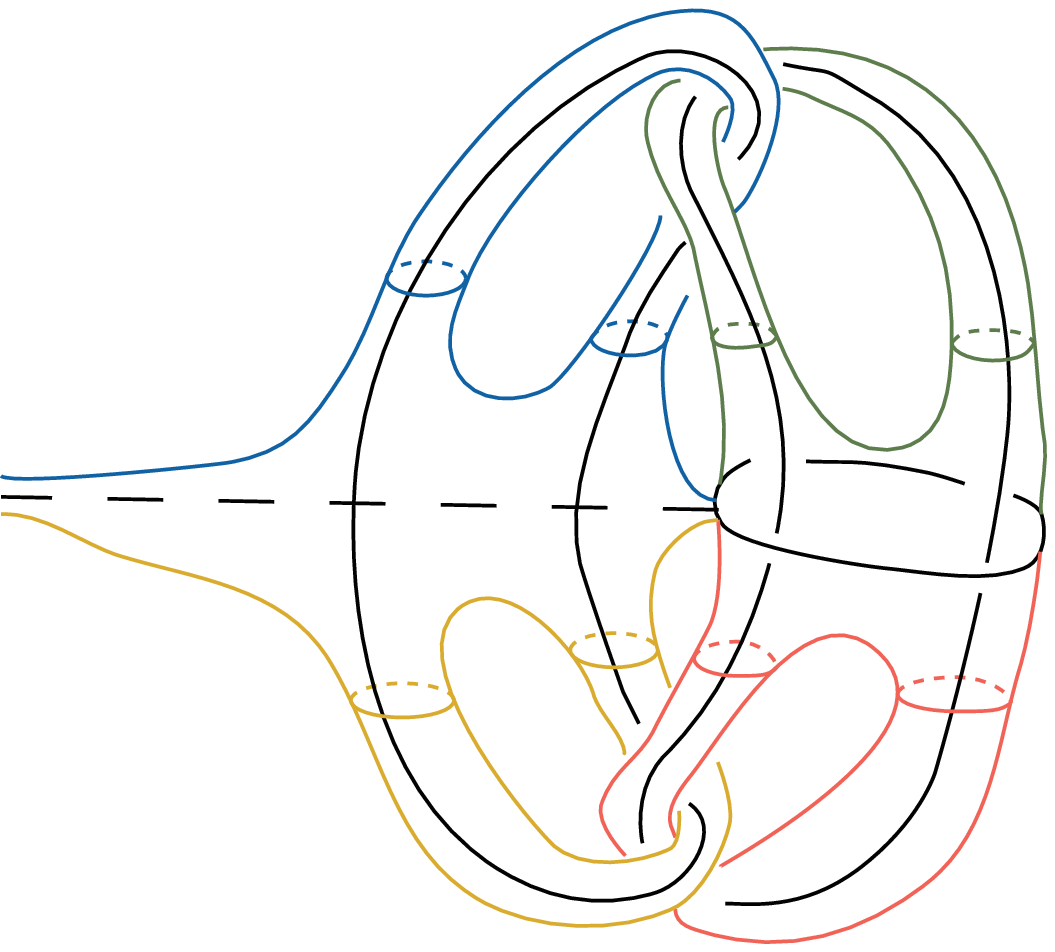}
  \caption{}
\end{center}
  \end{subfigure}%
\caption{The Borromean rings.}\label{bor}
\end{figure}

\begin{example}\label{ex:Borromean} \textbf{Borromean Rings.} We consider the complement $N$ of the Borromean Rings. By \cite[Example 2]{Thurston}, we know that the Thurston ball is an octahedron and its vertices are homology classes that are non-trivial on exactly one link component. In Figure \ref{bor}, there are 4 Seifert surfaces which are properly norm-minimizing surfaces representing the homology class $z$ related to one link component. The blue and green Seifert surfaces are parallel as well as the yellow and red ones. The blue and red surfaces are a facet surface of this link component. Then the guts of this facet surface is two pieces of $T\times I$ with 2 sutures on a boundary component and the other component as a sutured torus as in Figure \ref{fig:T*I}.

A homology class which is non-trivial on exactly two link components is on an edge of the Thurston ball. We can construct a norm-minimizing surface $T$ representing such homology class $w$ by constructing Seifert surface for two link components and cut-and-paste techniques. As stated in Theorem \ref{T3}, we can conduct a non-trivial decomposition of $\Gamma(z)$ along $T$ so that the guts of the resulting sutured manifold is a $T\times I$ with 2 sutures on a boundary component and the other component as a sutured torus.

\end{example}

In the remaining of this section, we analyze the guts of 2-bridge knot complements.

\begin{definition}\label{def:gutsofknot}
	Let $K$ be a knot in $S^3$. The \emph{guts} of $K$ (or the complement of $K$) is the guts of $z \in H_2(S^3\backslash K, \partial (S^3\backslash K))$ where $z$ is the homology class of a Seifert surface. 
\end{definition}

\begin{definition}
	 A \emph{2-bridge knot} is a knot which can be regular isotoped so that the natural height function given by the z-coordinate has only two maxima and two minima as critical points. For a 2-bridge knot, we can assign a rational number $p/q$, with $q$ odd such that it is isotoped to a square "pillowcase" with slope $\pm p/q$. We denote this 2-bridge knot as $K_{p/q}$. See Figure \ref{fig:2bridge}. 
 \end{definition}

\begin{figure}[htb]
\begin{center}
	\includegraphics[width = 4 in]{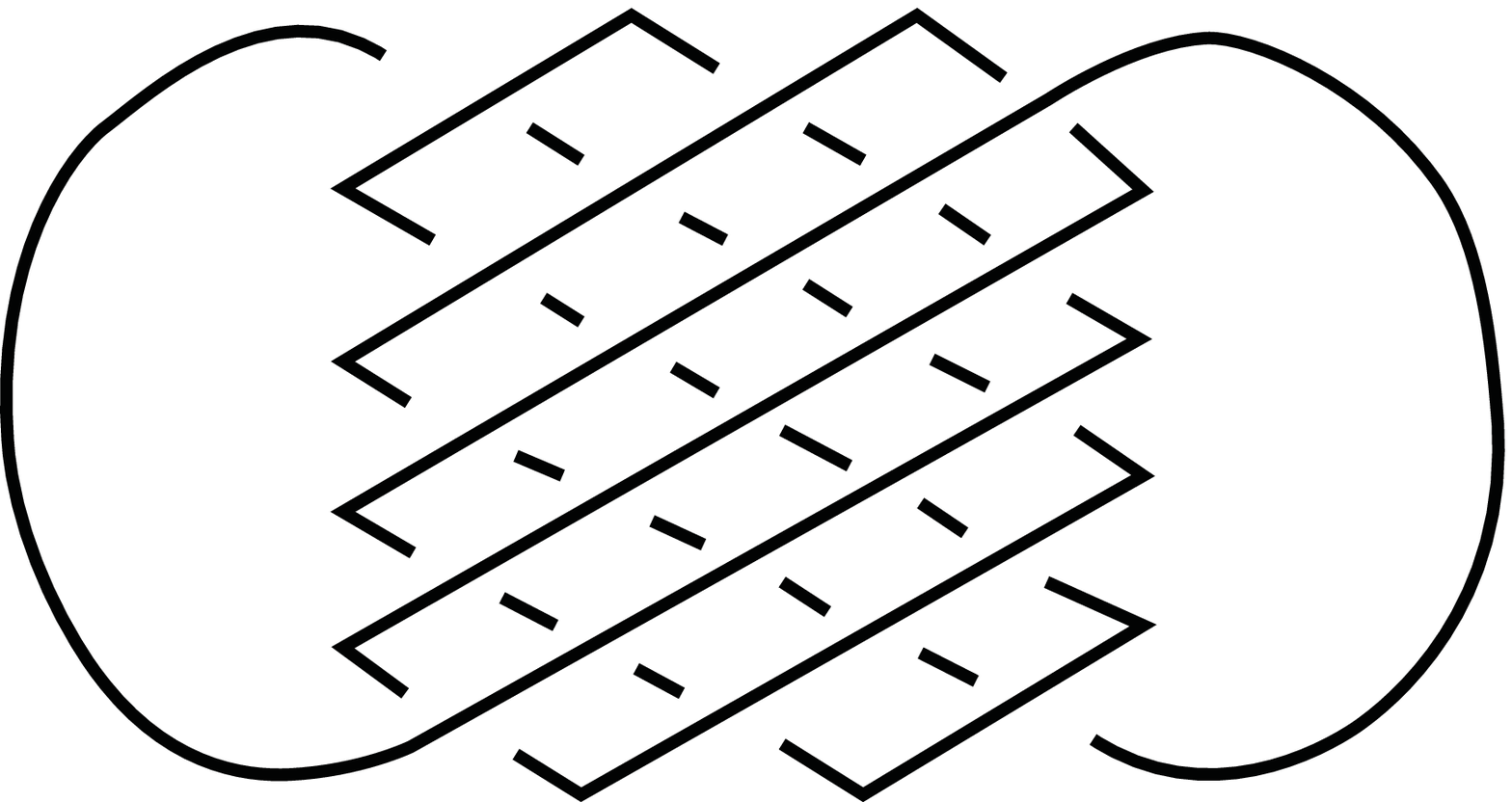}
  \caption{Cf. \cite[Fig. 1]{hatcher1985incompressible}. 2-Bridge Knot $K_{3/5}$.}\label{fig:2bridge}
\end{center}
  
\end{figure}

\begin{theorem}[{\cite[Page 225]{hatcher1985incompressible}}]
	$K_{p/q}$ is isotopic to $K_{p'/q'}$ if and only if $q'= q$ and $p' \equiv p^{\pm 1} (\textnormal{mod} q)$. 
\end{theorem}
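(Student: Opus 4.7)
The plan is to prove this classical theorem of Schubert by passing to the 2-fold cyclic cover of $S^{3}$ branched along $K_{p/q}$. The first step is to identify this double branched cover with the lens space $L(q,p)$. The pillowcase realization of $K_{p/q}$ lies on a bridge 2-sphere bounding two 3-balls whose interiors contain the two bridges; taking the 2-fold cover branched along the four bridge endpoints turns the pillowcase into a 2-torus and each 3-ball into a solid torus. The resulting Heegaard splitting of the branched cover has gluing map determined by the slope $p/q$, producing exactly $L(q,p)$.

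For the ``if'' direction I would exhibit explicit ambient isotopies realizing the two generating substitutions. Replacing $p$ by $p+q$ is a mere change of representative for the same slope on the pillowcase and hence yields the same knot trivially. Replacing $p$ by $p^{-1} \pmod q$ is realized by a $\pi/2$-rotation of the pillowcase that interchanges the two bridge axes; on the torus double cover this is the $S$-transformation swapping meridian and longitude, and it carries $K_{p/q}$ to $K_{p^{-1}/q}$.

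For the ``only if'' direction, an ambient isotopy $K_{p/q}\simeq K_{p'/q'}$ of pairs $(S^{3},K)$ lifts to a homeomorphism of branched covers $L(q,p)\cong L(q',p')$. By Reidemeister's classification of lens spaces, this forces $q'=q$ and $p'\equiv\pm p^{\pm 1}\pmod q$. The residual sign is absorbed by observing that $K_{-p/q}$ is isotopic to $K_{p/q}$ as an unoriented knot: a $\pi$-rotation of the pillowcase about a horizontal axis through two opposite corners fixes the knot type while negating the slope. Combining these gives $p'\equiv p^{\pm 1}\pmod q$.

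The main obstacle will be the bookkeeping of orientation and sign conventions: matching orientations on the bridge sphere with those on the lens space, the sign ambiguity in the slope, and distinguishing orientation-preserving from orientation-reversing equivalences of $L(q,p)$. The hard black-box ingredient, Reidemeister's lens-space classification, is cited rather than proved; restricting to $q$ odd (as in the statement) simplifies the sign analysis and yields the clean formula $p'\equiv p^{\pm 1}\pmod q$ rather than the looser condition $p'\equiv \pm p^{\pm 1}$ that would appear in the general case.
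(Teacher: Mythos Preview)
The paper does not prove this theorem; it merely quotes it from Hatcher--Thurston (the result is originally due to Schubert). So there is no ``paper's own proof'' to compare against, and your outline is the classical branched-double-cover argument that one would expect here.

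Most of your plan is fine, but the sign-absorption step is wrong and, as written, would make the argument fail. You claim that a $\pi$-rotation of $S^{3}$ about a horizontal axis through two opposite corners of the pillowcase carries $K_{p/q}$ to $K_{-p/q}$, so that the residual sign from the unoriented lens-space classification can be discarded. In fact such a rotation (being orientation-preserving on $S^{3}$ and on the bridge sphere) does not negate the slope; on the torus double cover it lifts to $\pm I$ or a translation and preserves slopes. Negating the slope requires a reflection, which is orientation-reversing on $S^{3}$. Concretely, $K_{-p/q}=K_{(q-p)/q}$ is the \emph{mirror image} of $K_{p/q}$, and for a chiral 2-bridge knot (e.g.\ the trefoil $K_{1/3}$ versus $K_{2/3}$) these are genuinely non-isotopic. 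If your absorption claim were true, the theorem would be false.

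The correct repair is exactly the point you flag as ``bookkeeping'' and then mishandle: an ambient isotopy of $(S^{3},K)$ lifts to an \emph{orientation-preserving} homeomorphism of the branched covers, and the oriented classification of lens spaces (Reidemeister--Brody) gives $L(q,p)\cong^{+}L(q',p')$ iff $q'=q$ and $p'\equiv p^{\pm 1}\pmod q$ on the nose, with no extra sign to absorb. Track the orientation through the branched-cover construction and invoke the oriented version of the lens-space theorem; then the ``only if'' direction is complete without any appeal to $K_{-p/q}\simeq K_{p/q}$.
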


\begin{definition}
	
	We define a \emph{continued fraction} as 
	\[
	r + \cfrac{1}{b_1 - \cfrac{1}{b_2 - \cfrac{1}{\cdots - \cfrac{1}{b_k}}}}
	\]
	where $r,b_i \in \Z$, and denote it as $r + [b_1,\ldots,b_k]$. We call $r + [b_1,\ldots,b_k]$ is a \emph{continued fraction expansion} of $p/q$ if $p/q = r + [b_1,\ldots,b_k]$.
\end{definition}

\begin{example}
	\[
	3/5 = 1 + [-2,2]
	\]
\end{example}

Hatcher and Thurston \cite{hatcher1985incompressible} classified all incompressible surfaces, and in particular, oriented Seifert surfaces in 2-bridge knot complements.

\begin{theorem}[{Cf. \cite[Section 1]{conway1970enumeration}, \cite[Page 226]{hatcher1985incompressible}}]
	If $p/q=r+[b_1,\cdots,b_k]$, then $K_{p/q}$ is the boundary of the surface obtained by plumbing together $k$ bands in a row, the $i$-th band having $b_i$ half-twists (right-handed if $b_i > 0$ and left-handed if $b_i < 0$).
\end{theorem}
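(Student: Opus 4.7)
The plan is to prove the theorem by induction on the length $k$ of the continued fraction expansion, using the classical correspondence between rational tangles and continued fractions due to Conway. The central idea is that plumbing a band with $b_i$ half-twists onto an existing surface corresponds exactly to the tangle operation of adding $b_i$ crossings (or taking a reciprocal and then adding crossings, depending on the orientation convention), so iterating the plumbing construction realizes the continued fraction $r+[b_1,\ldots,b_k]$ on the slope of the resulting boundary curve on the pillowcase.

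First I would make precise the plumbing construction: starting from a disk, plumb a band with $b_1$ half-twists to obtain a once-punctured annulus or M\"obius-like surface whose boundary is the $(2,b_1)$-torus link; this gives the base case, since $1/b_1$ is the continued fraction $[b_1]$ and the boundary is $K_{1/b_1}$. For the inductive step, suppose the surface $F_{k-1}$ obtained from plumbing $k-1$ bands with twists $b_2,\ldots,b_k$ has boundary $K_{p'/q'}$ with $p'/q' = [b_2,\ldots,b_k]$. Plumbing a new band along an arc on $F_{k-1}$ meeting its boundary adds $b_1$ half-twists in a direction orthogonal to the previous plumbing axis; I would verify that this operation changes the slope of the boundary on the pillowcase according to the rule $s \mapsto b_1 - 1/s$, which is exactly the continued fraction recursion $[b_1,b_2,\ldots,b_k] = b_1 - 1/[b_2,\ldots,b_k]$. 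Absorbing the integer $r$ amounts to a final integer twist, which corresponds to longitudinal twisting of the boundary on the pillowcase, changing the slope from $[b_1,\ldots,b_k]$ to $r+[b_1,\ldots,b_k] = p/q$.

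To identify the resulting knot with $K_{p/q}$, I would use the standard fact that 2-bridge knots arise as the numerator closure of rational tangles and that the rational tangle of slope $p/q$ (in Conway's sense) closes to exactly the 2-bridge knot with pillowcase slope $\pm p/q$. Since the plumbed band surface can be pushed into a $3$-ball whose complement contains the two bridges, its boundary is the closure of the associated rational tangle, and this rational tangle is precisely the one with slope $r+[b_1,\ldots,b_k]$. Invoking the classification theorem for 2-bridge knots (already stated in the excerpt) together with Conway's correspondence then identifies the boundary of the plumbed surface with $K_{p/q}$.

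The main obstacle I anticipate is the careful bookkeeping of orientations and sign conventions: the sign of $b_i$ (right-handed versus left-handed twists) must correctly match the sign in the continued fraction, and the plumbing axis alternates direction from one band to the next, which is the geometric source of the alternating reciprocal operation in the continued fraction. Verifying that the alternation in the plumbing matches the alternation implicit in the continued fraction expansion requires an explicit isotopy argument, typically drawn as a sequence of tangle diagrams. Once this combinatorial verification is done, the theorem follows from the induction and Conway's classification, with no further analytic input needed.
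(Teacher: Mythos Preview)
The paper does not give a proof of this theorem at all: it is stated as a known result with citations to Conway and Hatcher--Thurston, and is used as background for the computation of guts of 2-bridge knot complements. So there is no ``paper's own proof'' to compare against.

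That said, your proposed argument is essentially the standard one found in the cited references, and the outline is correct. The induction on $k$ via the tangle calculus, with plumbing a new twisted band corresponding to the operation $s \mapsto b_1 - 1/s$ on the slope, is exactly how Hatcher and Thurston set things up, and Conway's classification of rational tangles supplies the identification of the boundary with $K_{p/q}$. Your anticipated obstacle is also the real one: the sign and orientation conventions (right- versus left-handed twists, and the alternating plumbing axis matching the reciprocal in the continued fraction) are where any actual mistake would occur, and the references handle this by explicit diagrams rather than words. If you were writing this up in full you would need those diagrams; as a sketch your proposal is sound and aligned with the literature the paper defers to.
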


There are two essentially different ways of performing each plumbing of two adjacent bands. One way of describing this choice is to say that instead of using one of the horizontal plumbing squares, we could use the complement of this square in the horizontal plane containing it, compactified by a point at $\infty $. (Thus we are now regarding $S^3$ as the 2-point compactification of $S^2 \times \R$, with the spheres $S^2  \times  \{*\}$ being horizontal.) See Figure \ref{fig:plumbing}.

\begin{figure}[hbt]
\begin{center}
	 \includegraphics[width = 4 in]{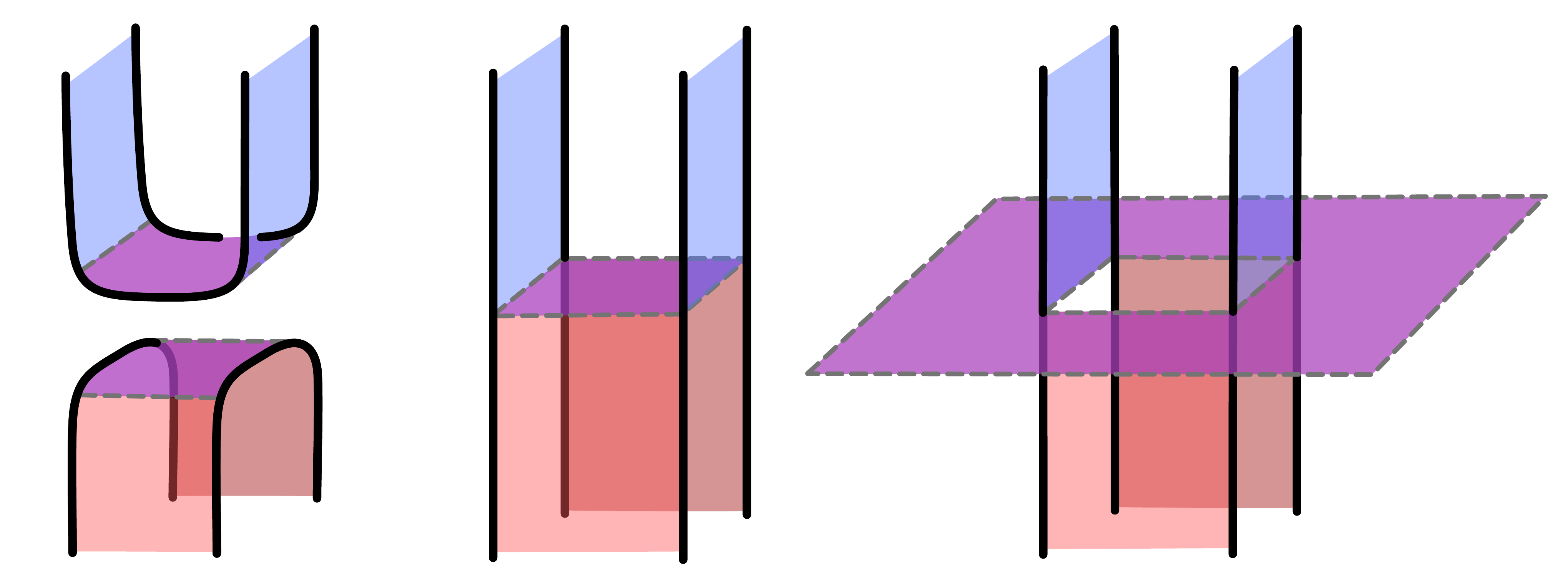}
  \caption{\cite[Figure 6.5]{agol2015certifying} Plumbing.} \label{fig:plumbing}
\end{center}
 
\end{figure}

If we include for each plumbing both of these complementary horizontal plumbing squares, we obtain a certain branched surface $\Sigma[b_1,\ldots, b_k]$. $\Sigma[b_1,\ldots, b_k]$ carries a large number of (not necessarily connected) surfaces, labelled $S_n(n_1,\ldots,n_{k-1})$, where $n\ge 1$ and $0\le n_i \le n$. By definition, $S_n(n_1,\ldots, n_{k-1})$ consists of $n$ parallel sheets running close to the vertical portions of each band of $\Sigma[b_1,\ldots,b_k]$, which bifurcate into $n_i$ parallel copies of the $i$-th inner plumbing square and $n-n_i$ parallel copies of the $i$-th outer plumbing square. For example, when $n= 1$, the surfaces $S_1(n_1, \ldots ,n_{k-1})$ ($n_i=0$ or $1$) are just the $2^{k-1}$ plumbings of the original $k$ bands.

\begin{proposition}[{\cite[Proposition 1]{hatcher1985incompressible}}]
	Orientable incompressible Seifert surfaces for $K_{p/q}$ can be represented by single-sheeted surfaces $S_1(n_1,\cdots,n_{k-1})$ carried by $\Sigma[b_1,\cdots,b_k]$ with each $b_i$ even.
\end{proposition}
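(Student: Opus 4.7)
The plan is to deduce this proposition as a consequence of the general classification of incompressible surfaces in $2$-bridge knot complements proved in \cite{hatcher1985incompressible}, combined with two elementary observations: a counting argument that forces $n=1$, and an orientability check on each plumbing band that forces $b_i$ to be even.

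First, I would invoke the main classification theorem of Hatcher--Thurston, which says that every incompressible, $\partial$-incompressible surface in the complement of $K_{p/q}$ with meridional or empty boundary is (up to isotopy) one of the surfaces $S_n(n_1,\ldots,n_{k-1})$ carried by a branched surface $\Sigma[b_1,\ldots,b_k]$ associated to some continued fraction expansion $p/q = r + [b_1,\ldots,b_k]$. The proof of that underlying theorem is the difficult part; I would cite it rather than reproduce it. Given this, the problem reduces to asking which of these model surfaces are actually orientable Seifert surfaces for $K_{p/q}$.

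Second, I would analyze the boundary behavior on $\partial \eta(K)$. Each sheet of $S_n(n_1,\ldots,n_{k-1})$ contributes one longitudinal curve on $\partial \eta(K)$ near each bridge, so the total boundary of $S_n$ on $\partial \eta(K)$ consists of $n$ parallel curves. A Seifert surface for the knot $K_{p/q}$ has a single boundary component representing the preferred longitude, hence the algebraic intersection of $\partial S_n$ with a meridian must equal $1$; this forces $n=1$. (The same calculation, done carefully at the bridges where sheets close up, is the only slightly delicate point of the argument.)

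Third, for orientability, I would examine the plumbing one band at a time. Locally, the $i$-th band of $\Sigma[b_1,\ldots,b_k]$ is an embedded rectangle with $b_i$ half-twists; the corresponding subsurface of $S_1(n_1,\ldots,n_{k-1})$ is homeomorphic to a band with $b_i$ half-twists, which is an annulus when $b_i$ is even and a Möbius band when $b_i$ is odd. Consequently $S_1(n_1,\ldots,n_{k-1})$ is orientable if and only if every $b_i$ is even. To finish, I would note that any rational $p/q$ with $q$ odd admits a continued fraction expansion with all even partial quotients (obtained from any given expansion by the standard identities $[\ldots,a,\pm 1,b,\ldots] = [\ldots,a \mp 1, \mp b, \ldots]$ and $[\ldots,a,b,\ldots] = [\ldots, a, \pm 1, \mp 1, b \pm 1, \ldots]$, repeatedly applied to eliminate odd entries), so such surfaces exist and, by the classification, every orientable incompressible Seifert surface is isotopic to one of them.

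The main obstacle is the boundary bookkeeping near the bridges in the second step: the branched-surface model is cleanest in the interior of each band, but the Seifert condition is a statement about how the sheets of $S_n$ close up at the two maxima and two minima of the height function. Once one verifies that $n$ sheets produce $n$ parallel longitudes there, the remaining ingredients (orientability band-by-band, existence of even expansions) are elementary.
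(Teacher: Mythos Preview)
The paper does not give its own proof of this proposition: it is simply quoted from \cite{hatcher1985incompressible} and used as a black box, so there is nothing in the present paper to compare your argument against. Your outline is the natural way to extract the Seifert-surface case from Hatcher--Thurston's full classification, and the two substantive reductions (single-sheetedness from the boundary count, evenness of the $b_i$ from orientability of each band) are the right ones; this is essentially how the result is isolated in \cite{hatcher1985incompressible} itself.

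One small caution on your second step: the boundary of $S_n(n_1,\ldots,n_{k-1})$ is not in general a union of $n$ parallel longitudes; for arbitrary continued-fraction data the boundary slope can be nonzero, and the number of boundary components depends on the parities involved. What you actually need is the combined statement that a connected orientable spanning surface with a single longitudinal boundary curve forces both $n=1$ and the boundary-slope computation to come out to zero, which in turn is tied to the even-$b_i$ expansion. Hatcher--Thurston handle this via their explicit boundary-slope formula rather than a bare sheet count, so if you flesh this step out you should route it through that formula instead of the heuristic ``$n$ sheets give $n$ longitudes.''
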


\begin{proposition}Let $p/q$ be a rational number with $q$ odd. Then	 there is only one continued fraction expansions $p/q = r+ [b_1,\cdots,b_k]$ for $p/q$ with each $b_i$ even and $|b_i| \ge 2$.
\end{proposition}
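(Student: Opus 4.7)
The plan is to prove the proposition by first establishing a uniform size bound on even continued fractions and then combining it with parity tracking to pin down both $r$ and the $b_i$'s. The size bound, which I would prove first by induction on $k$, is: whenever each $b_i$ is an even integer with $|b_i|\ge 2$, one has $0<|[b_1,\ldots,b_k]|<1$. The base case $k=1$ is $|1/b_1|\le 1/2$, and for the inductive step $|b_1-[b_2,\ldots,b_k]|\ge |b_1|-|[b_2,\ldots,b_k]|>2-1=1$, so the reciprocal lies in $(-1,1)\setminus\{0\}$.

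To force $r$, I would track the numerator and denominator of $[b_j,\ldots,b_k]=N_j/D_j$ in lowest terms via the recursion $N_{j-1}=D_j$, $D_{j-1}=b_{j-1}D_j-N_j$. Since every $b_i$ is even, an easy induction starting from $(N_k,D_k)=(1,b_k)$ shows $D_j$ and $N_j$ always have opposite parities, with $D_j$ even iff $k-j$ is even; in particular $D_1$ is odd iff $k$ is even, in which case $N_1$ is also even. Writing $p/q$ (assumed in lowest terms) as $p/q=r+N_1/D_1=(rD_1+N_1)/D_1$ forces $q=D_1$ and $p-rq=N_1$. If $q$ is odd then $k$ is even and $N_1=p-rq$ is even, which, using $q$ odd, is equivalent to $r\equiv p\pmod 2$. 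Combined with the size bound $|p/q-r|<1$, which restricts $r$ to $\{\lfloor p/q\rfloor,\lceil p/q\rceil\}$---two integers of opposite parity when $q>1$ is odd and $\gcd(p,q)=1$---the parity condition singles out a unique $r$. The degenerate case $q=1$ forces $p/q$ to be an integer, which rules out any $k\ge 1$ expansion (since even CFs are nonzero and of absolute value $<1$) and leaves only $k=0$ with $r=p$.

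With $r$ fixed and $\alpha:=p/q-r$ determined, suppose $\alpha=[b_1,\ldots,b_k]=[b_1',\ldots,b_{k'}']$ are two valid expansions. Unfolding one layer gives $b_1=1/\alpha+\beta$ and $b_1'=1/\alpha+\beta'$ where $\beta,\beta'$ are the tails $[b_2,\ldots,b_k]$ and $[b_2',\ldots,b_{k'}']$ (set to $0$ if the corresponding length is $1$). By the size bound, $|\beta|,|\beta'|<1$, so $|b_1-b_1'|<2$; since both are even integers, $b_1=b_1'$. Then $\beta=\beta'$, and induction on the length (with the length-$1$ base case handled by observing that if $k=1$ then $\alpha=1/b_1$ has $1/\alpha$ an even integer, forcing any competing expansion also to have length $1$) yields $k=k'$ and $b_i=b_i'$ throughout. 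The main obstacle will be the parity bookkeeping: confirming that $q$ odd is used in exactly the right two places---forcing $N_1$ even via the continuant recursion, and ensuring the two candidate $r$'s fall into opposite parity classes---and cleanly handling the edge cases $q=1$ and length-$1$ tails so that no gap is left.
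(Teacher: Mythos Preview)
Your argument is correct and rests on exactly the same two ingredients the paper uses---that $[b_1,\ldots,b_k]$ has numerator and denominator of opposite parity, and that $|[b_1,\ldots,b_k]|<1$---so the approach is essentially identical, just written out in far more detail than the paper's three-sentence sketch. One small point: the paper also indicates existence (``Choose an $r$ such that $p/q-r$ has an even numerator. Then we will always have different parities on two sides''), whereas your write-up treats only uniqueness; if the statement is read as ``there exists exactly one,'' you should add a line noting that the same parity-and-size reasoning, applied recursively to $p/q-r$, actually produces such an expansion.
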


\begin{proof}
	Choose an $r$ such that $p/q -r$ has an even numerator. Then we will always have different parities on two sides. For the uniqueness, note that $[b_1,\cdots,b_k]$ has different parities on the numerator and the denominator, and its absolute value is smaller than 1.
\end{proof}

\begin{lemma} Let $p/q = r+ [b_1,\cdots,b_k]$	be the unique continued fraction expansions with each $b_i$ even and $|b_i| \ge 2$. Then the guts of $S^3 - K_{p/q}$ consist of the guts of the complements of $b_i$-half-twisted bands in $S^3$.
\end{lemma}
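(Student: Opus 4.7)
The plan is to realize the Seifert surface $S$ as an iterated Murasugi sum (plumbing) of the $k$ bands coming from the preceding theorem, identify the plumbing squares as product disks in the sutured complement $S^3 \spl S$, and decompose along them to separate the band contributions. Combining this with the well-definedness of horizontally prime guts (Corollary \ref{cor:sutured guts}) then yields the stated decomposition.

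Concretely, I would first write $S = B_1 *_{D_1} B_2 *_{D_2} \cdots *_{D_{k-1}} B_k$, where $B_i$ is the $b_i$-half-twisted band and $D_i$ is the $i$-th plumbing square, and verify that $S$ is properly norm-minimizing: incompressibility is provided by Hatcher--Thurston \cite{hatcher1985incompressible}, and with all $b_i$ even the surface is orientable and its genus realizes the Thurston norm for the Seifert homology class $z$. Then I would consider the taut sutured manifold $(M,\gamma) = S^3 \spl S$. Each plumbing site comes equipped with a $2$-sphere $\Sigma_i \subset S^3$ meeting $S$ exactly in $D_i$, and I would set $\delta_i := \overline{\Sigma_i \setminus D_i}$. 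This $\delta_i$ survives in $M$ as a properly embedded disk, and its boundary meets the sutures on $\partial \eta(K)$ in exactly two points (the two arcs of $\partial D_i$ lying on $K$); hence each $\delta_i$ is a product disk in $(M,\gamma)$ in the sense of Section~\ref{sec:guts:thurstonnorm}.

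The main step is to verify that the sutured decomposition of $(M,\gamma)$ along the disjoint collection $\delta_1, \ldots, \delta_{k-1}$ undoes the plumbings at the sutured-manifold level, producing
\[
(M,\gamma) \stackrel{\delta_1 \cup \cdots \cup \delta_{k-1}}{\leadsto} \bigsqcup_{i=1}^{k} \bigl(S^3 \spl B_i\bigr),
\]
where each $S^3 \spl B_i$ carries its natural sutured structure with $R_\pm$ equal to the two sides of $B_i$ and sutures lying on $\partial \eta(\partial B_i)$. This amounts to tracking $R_+$, $R_-$, and $\gamma$ through the decomposition in the standard local picture of a Murasugi sum; the underlying 3-manifold homeomorphism is classical, and the main content is to check that the orientations and sutures align so that the resulting pieces are exactly the band complements. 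Since product disk decompositions preserve tautness and may be included in a maximal product decomposition surface when forming the guts (Definition \ref{reduced}), decomposing along the $\delta_i$ does not change the guts.

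To conclude, I would invoke Corollary \ref{cor:sutured guts}: the horizontally prime guts of $(M,\gamma) = S^3 \spl S$ is well-defined, and by the same argument used in the proof of Theorem \ref{thm:gutsforhomology} (extending a single properly norm-minimizing representative to a facet surface via a maximal collection of horizontal surfaces in $M \spl S$), this horizontally prime guts coincides with $\Gamma(z)$ in the sense of Definition \ref{def:gutsforhomology}, hence with the guts of $K_{p/q}$ in the sense of Definition \ref{def:gutsofknot}. Combined with the decomposition above, this shows that the guts of $S^3 - K_{p/q}$ is the disjoint union of the horizontally prime guts of each $S^3 \spl B_i$, i.e., the disjoint union of the guts of the complements of the $b_i$-half-twisted bands. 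The principal obstacle I anticipate is the sutured-level identification of the pieces in the displayed equation: while the unoriented $3$-manifold statement is standard, ensuring that the sutures and the co-orientations of $R_\pm$ on each $S^3 \spl B_i$ agree with the ``band complement'' structure, and that no $\delta_i$ becomes inessential or parallel in $M$ in a way that merges two bands, requires a careful local analysis at each plumbing site.
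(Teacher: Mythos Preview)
Your proposal is correct, but it follows a genuinely different route from the paper's proof.

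The paper argues by induction on $k$ and works directly from the definition of $\Gamma(z)$ via a facet surface. It writes down the explicit facet surface furnished by the Hatcher--Thurston classification, namely the $k$ single-sheeted surfaces
\[
S_1(0,0,\ldots,0),\ S_1(1,0,\ldots,0),\ \ldots,\ S_1(1,1,\ldots,1)
\]
carried by $\Sigma[b_1,\ldots,b_k]$. At the $(k-1)$-st plumbing level these surfaces bifurcate as in Figure~\ref{fig:libroid}; decomposing along the $4(k-1)$ product disks there peels off the last band complement together with some product pieces, and what remains above is exactly the configuration for $r+[b_1,\ldots,b_{k-1}]$, which is handled by induction.

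Your approach instead takes a \emph{single} Seifert surface $S$, uses the classical Gabai observation that the complementary disks $\delta_i$ of the plumbing squares are product disks in $S^3\spl S$, decomposes once along $\delta_1\cup\cdots\cup\delta_{k-1}$ to obtain $\bigsqcup_i(S^3\spl B_i)$, and then appeals to Corollary~\ref{cor:sutured guts} to identify the horizontally prime guts of $S^3\spl S$ with $\Gamma(z)$. This is cleaner and more conceptual: it isolates the general principle that guts behave well under Murasugi sum, and it avoids the induction entirely. The paper's argument, by contrast, is more self-contained---it never leaves the facet-surface definition, does not need Corollary~\ref{cor:sutured guts}, and makes the combinatorics of the Hatcher--Thurston branched surface do all the work.

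One small point to tighten in your write-up: you use that decomposing along the product disks $\delta_i$ commutes with passing to horizontally prime guts. This is true (horizontal surfaces in $S^3\spl S$ can be isotoped so that they meet each $\delta_i$ in parallel arcs, after which $\delta_i$ breaks into product disks in each layer), but it is not stated in the paper, so you should include the one-line justification. You should also note, when matching the pieces at the end, that each $B_i$ is itself a facet surface for its torus link, so that the horizontally prime guts of $S^3\spl B_i$ really is ``the guts of the complement of the $b_i$-half-twisted band'' in the sense of the statement.
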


\begin{proof}
We prove this lemma by induction on $k$. When $k = 1$, the statement holds naturally. 

Suppose the lemma is true for $k-1$. We take a maximal collection of Seifert surfaces for $S^3 - K_{p/q}$ \[S_1(0,0,\cdots,0), S_1(1,0,\cdots,0), S_1(1,1,\cdots,0),\ldots, S_1(1,1,\cdots,1).\] 
	
	At the $({k-1})$-th level, $S_1(0,0,\cdots,0,0), S_1(1,0,\cdots,0,0), \ldots, S_1(1,1,\cdots,1,0)$ have the $i$-th inner plumbing square while $S_1(1,1,\cdots,1,1)$ has the $i$-th outer plumbing square. So locally, it will look like $(k-1)$ pieces of blue surfaces and 1 red surface in Figure \ref{fig:libroid}. Between each two surfaces, we can decompose along 4 product disks coming from the plumbing. So altogether we decompose along the $4(k-1)$ product disks coming from the plumbing. Below the $(k-1)$-th level, there are $(k-1)$ product sutured manifolds and a complement of a $b_{k-1}$-half-twisted band in $S^3$. Above the $(k-1)$-th level, we have a product sutured manifolds coming from $S_1(1,1,\cdots,1,0)$ and $S_1(1,1,\cdots,1,1)$. If we drop this product sutured manifold, the remaining sutured manifolds above the $(k-1)$-th level are equivalent to decomposing $S^3 - K_{p'/q'}$ along $S_1(0,0,\cdots,0), S_1(1,0,\cdots,0), \ldots, S_1(1,1,\cdots,1)$ (only $(k-2)$ coordinates) where $p'/q'=r+ [b_1,\cdots,b_{k-1}]$ and then a product disk between each two surfaces on $(k-1)$-th level. Hence by induction, we know that the lemma holds for any $k$.
	\begin{figure}[hbt]
	\begin{center}
		\includegraphics[width = 3 in]{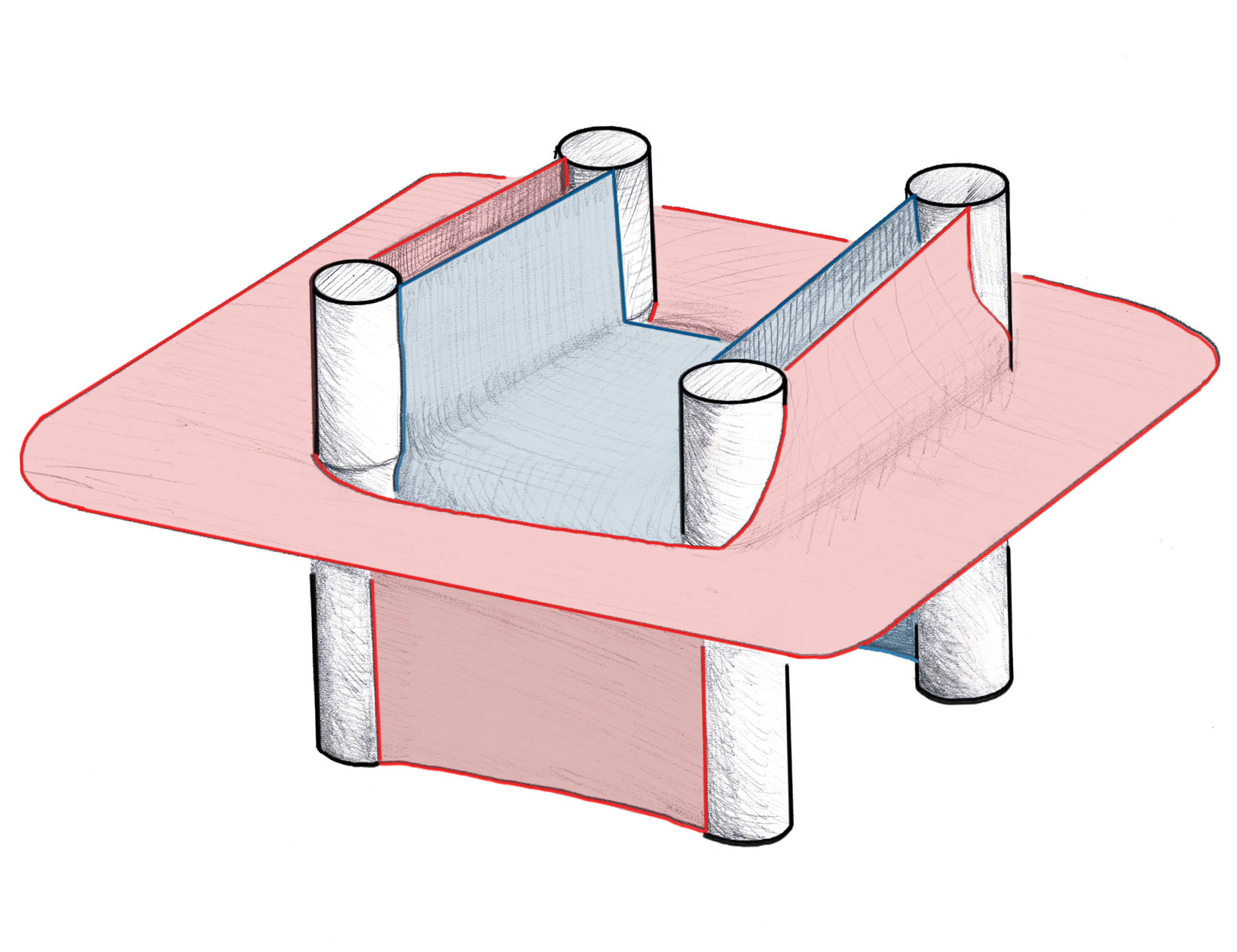}
  \caption{\cite[Figure 6.7]{agol2015certifying} Surfaces bifurcating into inner and outer plumbing squares.} \label{fig:libroid}

	\end{center}
  \end{figure}

\end{proof}

\begin{lemma}
	The complements of a 2-half-twisted band is a product sutured manifold. 
\end{lemma}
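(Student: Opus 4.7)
The plan is to identify the $2$-half-twisted band $B \subset S^3$ with the Hopf annulus, and then exploit the Hopf fibration to exhibit an explicit product structure on its complement. First, I would observe that a band with $\pm 2$ half-twists is an orientable annulus whose boundary is the Hopf link $H \subset S^3$. Up to ambient isotopy, $B$ is the Hopf annulus $\pi^{-1}(\gamma)$, where $\pi\colon S^3 \to S^2$ is the Hopf fibration and $\gamma \subset S^2$ is an embedded arc joining the two points of $\pi(H)$.

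Next, since $\gamma$ is an arc, the set $S^2 \setminus \eta(\gamma)$ is a closed disk $D^2$, so $S^3 \setminus \eta(B) = \pi^{-1}(D^2)$ is a circle bundle over a contractible base, hence trivial: $S^3 \setminus \eta(B) \cong S^1 \times D^2$. Parameterizing $D^2 \cong I \times I$ with the first $I$-factor running transverse to $\gamma$, this rewrites as $S^1 \times I \times I \cong B \times I$.

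Finally, I would verify that the sutured manifold structure matches the product sutured structure on $B \times I$. The two parallel copies of $B$ sitting in $\partial \eta(B)$ correspond, under the homeomorphism above, to $B \times \{0\}$ and $B \times \{1\}$, i.e., to $R_-$ and $R_+$; the two annuli making up $\partial \eta(H) \cap (S^3 \setminus \eta(B))$ form the vertical boundary $\partial B \times I$ of the product, with sutures running along their core circles. Orientation compatibility follows from the fact that $B$ is a Seifert surface for $H$, so its transverse orientation matches the $I$-factor of the product.

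There is no real obstacle in this argument. The only care required is in matching the Seifert orientation of $B$ with the product orientation of $B \times I$, which is immediate from the Hopf fibration construction. The fact being exploited is equivalent to the classical observation that the Hopf link is fibered with fiber the Hopf annulus and trivial monodromy after cutting.
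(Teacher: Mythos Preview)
Your proof is correct and follows essentially the same approach as the paper: both identify the boundary of the $2$-half-twisted band with the Hopf link and use the Hopf fibration $S^3\to S^2$ to read off the product structure on the complement. One small quibble with your closing remark: the monodromy of the Hopf link fibration is a Dehn twist, not the identity, but this is irrelevant to the argument since cutting any surface bundle along a fiber yields a product sutured manifold regardless of the monodromy.
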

\begin{proof}
The boundary of a 2-half-twisted band is the Hopf link. The Hopf fibration is a presentation of $S^3$ as an (oriented) circle bundle over $S^2$ where the fibers of the north and south pole in $S^2$ is the Hopf link. When one deletes the Hopf link, one has a circle bundle over $\R \times S^1$. Projecting once more to the circle factor, one has an open annulus bundle over $S^1$ and a fiber is the interior of a 2-half-twisted band.
\end{proof}

\begin{lemma}
	The complements of a $2k$-half-twisted band is equivalent to a solid torus with two sutures of slope $1/k$.
\end{lemma}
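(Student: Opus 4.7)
The plan is to compute the sutured manifold $(M',\gamma')$ obtained by decomposing the link complement along $A$ directly, exhibit the underlying $3$-manifold as an explicit solid torus, and then read off the slope of the sutures on its boundary.

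First I would observe that, with $L=\partial A$ the $(2,2k)$-torus link, the underlying space of the decomposed sutured manifold is $M' = (S^3\ssm N(L))\ssm \eta(A) = S^3\ssm N(A)$ for a compatible choice $N(L)\subset N(A)$. Since the core $C$ of the band $A$ is an unknot in $S^3$ (only the framing is non-trivial), $N(A)$ is an unknotted solid torus, and hence $M'$ is also a solid torus. Identifying $\partial N(A)$ with $\partial N(C)$ via the product region $N(A)\ssm N(C)\cong T^2\times I$, let $m,l$ denote the meridian and Seifert longitude of $C$ on $\partial N(C)$. Then a meridian $\mu'$ of $M'$ is given by $l$ (which bounds a Seifert disk of $C$ that can be pushed off $N(A)$), while a longitude $\nu'$ of $M'$ may be taken as $m$.

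Next I would identify the sutures. By Definition \ref{def:decompositionsurface} applied to the decomposition of $(S^3\ssm N(L),\partial N(L))$ along $A$, the new sutures are $\gamma' = (T_1\ssm\eta(\lambda_1))\cup(T_2\ssm\eta(\lambda_2))$ with $T_i = \partial N(L_i)$ and $\lambda_i = A\cap T_i$, giving two annular sutures whose cores are isotopic in $\partial M'$ to the boundary components $L_1,L_2$ of the band. Since $A$ has $2k$ half-twists, i.e.\ $k$ full twists, the $A$-framing of the core $C$ has self-linking number $k$, so $[L_i]=l+km$ on $\partial N(C)$, which becomes $\mu'+k\nu'$ in the basis above. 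Under the convention of Example \ref{ex:Book of I-bundles}, where slope $q/p$ corresponds to the class $qm+pl$, this is slope $1/k$, as claimed.

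The main obstacle is the bookkeeping of the various framings and bases—the Seifert framing of $C$, the $A$-framing of $C$, the meridian/longitude of $M'$ on $\partial N(A)$ coming from the product identification, and the slope convention—without sign or inversion errors. Once these are pinned down, the essential input is the elementary fact that $2k$ half-twists of the band contribute exactly $k$ to the framing of its core, which directly produces the slope $1/k$ on the complementary solid torus.
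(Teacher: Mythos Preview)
Your proof is correct and follows essentially the same approach as the paper: both identify the complement of the band as a solid torus (you via the unknottedness of the core $C$, the paper via the Heegaard torus carrying the $(2,2k)$-torus link $\partial A$) and then read off the suture slope from how the link sits on that torus. Your version is more explicit about the framing computation yielding slope $1/k$, whereas the paper simply notes that the inner solid torus is a product sutured manifold and leaves the slope on the outer one implicit.
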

\begin{proof}
	This is because the boundary of a $2k$-half-twisted band is a torus link on a torus $T^2$. We can decompose $S^3$ along this torus into two solid torus. And for the inside solid torus, it is a product sutured manifold.
\end{proof}

\begin{theorem} \label{thm:gutsfor2bridgeknot}
	Let $p/q = r+ [b_1,\cdots,b_k]$	be the unique continued fraction expansions with each $b_i$ even and $|b_i| \ge 2$. Then the guts of $S^3 - K_{p/q}$ consist of sutured solid tori with two sutures of slope $2/b_i$ for each $|b_i| >2$. 
\end{theorem}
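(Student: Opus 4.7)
The plan is to combine the three immediately preceding lemmas in a direct way. By the first lemma, the guts of $S^{3}\setminus K_{p/q}$ decomposes (up to equivalence) as the disjoint union, over $i=1,\dots,k$, of the guts of the complement of a $b_i$-half-twisted band in $S^{3}$. Therefore, it suffices to compute each of these guts individually and discard the product components.

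For each index $i$, I would split into two cases based on $|b_i|$. If $|b_i|=2$, the second lemma states that the complement of a $2$-half-twisted band is already a product sutured manifold (an open annulus bundle over $S^{1}$ coming from the Hopf fibration), so its guts is empty and it contributes nothing. If $|b_i|>2$, write $b_i=2k_i$ with $|k_i|\ge 2$; the third lemma identifies the complement of a $2k_i$-half-twisted band with a sutured solid torus having two parallel sutures of slope $1/k_i$, which in the original framing equals $2/b_i$. Because $|k_i|\ge 2$, the sutures are not longitudinal, so this solid torus is not a product sutured manifold and is in fact reduced and horizontally prime (it contains no essential product disk or annulus and no nonboundary-parallel horizontal surface). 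Hence it is its own guts component and survives.

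Taking the union over all $i$ with $|b_i|>2$ yields the claimed description: the guts of $S^{3}\setminus K_{p/q}$ is the disjoint union of sutured solid tori with two sutures of slope $2/b_i$, one for each $i$ with $|b_i|>2$.

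The only step that requires any genuine care is verifying that the sutured solid torus with two sutures of slope $1/k_i$ (when $|k_i|\ge 2$) is itself reduced and horizontally prime, so that nothing further can be removed. This is straightforward: a product disk would force the two sutures to be meridional (giving a ball), and an essential product annulus together with two non-longitudinal sutures is ruled out by a direct argument in the solid torus; a horizontal surface non-parallel to the boundary would have to be a disk bounding one of the sutures, again forcing a longitudinal slope. So this is the only place a short verification is needed, and the rest of the proof is just bookkeeping of the two cases $|b_i|=2$ versus $|b_i|>2$.
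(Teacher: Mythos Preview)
Your proposal is correct and matches the paper's approach exactly: the paper states Theorem~\ref{thm:gutsfor2bridgeknot} without a separate proof, treating it as an immediate consequence of the three preceding lemmas, and you have simply spelled out that combination. Your added verification that the solid torus with two sutures of slope $1/k_i$ (for $|k_i|\ge 2$) is reduced and horizontally prime is a reasonable elaboration, though note a small imprecision in your sketch: a horizontal surface in such a solid torus would be an annulus (not a disk), and the point is that every incompressible annulus in a solid torus is boundary-parallel, so the piece is indeed horizontally prime.
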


We want to show that the guts contain more information than the Knot Floer Homology, i.e. the Alexander polynomial and the signature of two bridge knot.

A knot $K$ is called {\em alternating} if it admits a planar diagram in which the over- and under-passes alternate, as we follow the knot. In the case of alternating knots, knot Floer homology is determined by two classical invariants, the Alexander polynomial $\Delta_K$ and the knot signature $\sigma(K)$. 

\begin{theorem}[Ozsv\'ath-Szab\'o \cite{ozsvath2003heegaard}]
\label{thm:alt}
Let $K \subset S^3$ be an alternating knot with Alexander polynomial $\Delta_K(t) = \sum_{s \in \Z} a_s t^s$ and signature $\sigma = \sigma(K)$.
Then:
$$ \HFKhat_i(K, s) = \begin{cases}
\Z^{|a_s|} & \text{if} \ i=s + \tfrac{\sigma}{2}, \\
0 & \text{otherwise.}
\end{cases}
$$ 
\end{theorem}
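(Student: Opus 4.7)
The plan is to follow the Ozsv\'ath--Szab\'o strategy of building the knot Floer chain complex from a combinatorial model attached to an alternating projection and then proving that this complex is ``thin''. Fix an alternating projection $D$ of $K$ and form the standard Heegaard diagram associated to $D$: take a tubular neighbourhood of the projection plane together with one handle at each crossing, with $\alpha$-curves arising from the regions of the diagram (other than two distinguished regions) and $\beta$-curves encircling each crossing. A standard lemma identifies the intersection points $\mathbf{x}\in \mathbb{T}_\alpha\cap\mathbb{T}_\beta$ giving generators of $\widehat{CFK}(D)$ with the \emph{Kauffman states} of $D$, i.e., assignments of a distinguished adjacent region to each crossing.

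The first main step is to compute the Maslov grading $M(\mathbf{x})$ and the Alexander grading $A(\mathbf{x})$ of each Kauffman state in terms of local contributions at the crossings. One shows that $A(\mathbf{x})$ equals one half of the signed local rotation contributions, so that summing $(-1)^{M(\mathbf{x})} t^{A(\mathbf{x})}$ over all states recovers (by Kauffman's state sum for the Alexander polynomial) the symmetrized $\Delta_K(t)$. In particular
\[
\chi\bigl(\widehat{HFK}(K,s)\bigr) \;=\; \pm\,a_s.
\]

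The second and central step is the \emph{thinness} of alternating knot Floer homology: for \emph{every} Kauffman state $\mathbf{x}$,
\[
M(\mathbf{x}) - A(\mathbf{x}) \;=\; \tfrac{\sigma(K)}{2}.
\]
The proof proceeds by showing the difference $\delta(\mathbf{x}):=M(\mathbf{x})-A(\mathbf{x})$ is independent of $\mathbf{x}$: any two Kauffman states are connected by a sequence of elementary transpositions at a single crossing, and an explicit computation shows $\delta$ is invariant under such a move (the Maslov and Alexander gradings change by the same integer). To pin down the constant one uses a preferred state, together with Gordon--Litherland / Goeritz-matrix formulas for the signature of an alternating link, to identify the common value of $\delta$ with $\sigma(K)/2$. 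This combinatorial signature calculation is the hardest part of the argument, as it must carefully track how the checkerboard colouring and the alternating condition conspire to make the signs in the state sum coherent.

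Once thinness is established, the chain complex $\widehat{CFK}(D)$ is supported on the single diagonal $M-A = \sigma/2$, so every differential has source and target in the same $(M-A)$ grading. Since $\widehat{HFK}$ refines the Alexander grading, cancellation of generators under the differential happens in pairs $(\mathbf{x},\mathbf{y})$ with $M(\mathbf{x})=M(\mathbf{y})+1$ and $A(\mathbf{x})=A(\mathbf{y})$; because thinness forces $M(\mathbf{x})-M(\mathbf{y}) = A(\mathbf{x})-A(\mathbf{y})$, no such cancellation is possible. Hence the rank of $\widehat{HFK}(K,s)$ equals the absolute value of its Euler characteristic, which is $|a_s|$, and it lives entirely in Maslov grading $s+\sigma/2$. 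The vanishing of $\widehat{HFK}_i(K,s)$ for $i\neq s+\sigma/2$ is then immediate from thinness. The main obstacle, as noted, is the signature identification in step two; once that is in hand the rest is Euler characteristic bookkeeping.
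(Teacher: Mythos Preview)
The paper does not prove this theorem at all: it is quoted verbatim as a result of Ozsv\'ath--Szab\'o \cite{ozsvath2003heegaard} and used only to justify the remark that for alternating (in particular 2-bridge) knots the knot Floer homology is determined by the Alexander polynomial and the signature. There is therefore no ``paper's own proof'' to compare your proposal against.

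That said, your outline is a reasonable sketch of the original Ozsv\'ath--Szab\'o argument: the Kauffman-state model for the generators, the local computation of the Alexander and Maslov gradings, and the thinness statement $M-A=\sigma/2$ are indeed the ingredients in \cite{ozsvath2003heegaard}. Your final step is correctly phrased: once all generators lie on a single $\delta$-diagonal, the differential (which preserves $A$ and drops $M$ by one) has no possible target and hence vanishes, so the homology is free of the asserted rank. One caution: your claim that the constancy of $\delta$ follows because ``any two Kauffman states are connected by a sequence of elementary transpositions at a single crossing'' and $\delta$ is invariant under each is not quite how the original argument runs; Ozsv\'ath--Szab\'o instead compute the local $M$- and $A$-contributions at each crossing and observe directly that for an \emph{alternating} diagram their difference is the same at every crossing type that actually occurs. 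The identification of the resulting constant with $\sigma/2$ then uses the Gordon--Litherland signature formula, as you say. These are refinements rather than gaps, but if you intend this as more than a sketch you should make the ``transposition'' step precise or replace it by the direct local computation.
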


\begin{theorem}[{\cite{minkus1982branched}, \cite[Theorem 9.3.6]{murasugi2007knot}}] Suppose $K_{p/q}$ is a 2-bridge knot or link with $0 < p < q, p \text{ odd, and}$ $\gcd(p, q) = 1$. Then
\begin{equation} \label{minkus formula}
\Delta_{K_{p/q}}(t)\doteq \sum_{k=0}^{q-1} (-1)^k t^{\sum_{i=0}^k \epsilon_i}
\end{equation}
where $\epsilon_i = (-1)^{\lfloor i p/q\rfloor}$ and $\lfloor x \rfloor$ is the largest integer less than or equal to $x$.

And the signature, $\sigma(K)$, of K is equal to the number of positive entries
minus the number of negative entries in the sequence of $\{\epsilon_i\} $, which is $\sum_{i=0}^k \epsilon_i$.
\end{theorem}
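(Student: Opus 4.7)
The plan is to prove both parts by a direct computation from an explicit presentation of the 2-bridge knot group together with an explicit Seifert form. Neither of these is the original argument used in the cited papers (which rely on branched cyclic cover and representation-theoretic techniques), but both are fairly standard given the very concrete description of 2-bridge knots.

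For the Alexander polynomial, I would start from the Schubert two-generator one-relator presentation
\[
\pi_1(S^3 \ssm K_{p/q}) \;=\; \langle a,b \mid aw = wb \rangle,
\]
where $w = b^{\epsilon_1} a^{\epsilon_2} b^{\epsilon_3} a^{\epsilon_4}\cdots$ is a word of length $q-1$ whose signed exponents are exactly the signs $\epsilon_i = (-1)^{\lfloor ip/q\rfloor}$ appearing in the theorem. Since both meridional generators abelianize to the same $t$, Fox calculus gives $\Delta_{K_{p/q}}(t) \doteq \partial r/\partial a$ abelianized, where $r = aw b^{-1} w^{-1}$ is the defining relator. Expanding via the Leibniz rule for Fox derivatives and applying $a, b \mapsto t$ converts the contribution of the prefix of $w$ of length $k$ into the term $(-1)^k t^{\sum_{i=0}^{k}\epsilon_i}$; summing these together with the boundary contributions from $a$ and $b^{-1}$ reproduces exactly the formula in \eqref{minkus formula}.

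For the signature, I would use the explicit Seifert surface from the plumbing of bands discussed earlier in this section: writing $p/q = r+[b_1,\dots,b_k]$ with each $b_i$ even gives a Seifert surface built from $k$ twisted bands whose Seifert matrix $V$ is (block-)tridiagonal with diagonal entries $\pm b_i/2$ and off-diagonal entries $\pm 1$. A sequence of simultaneous row and column operations on $V + V^{T}$ puts it in diagonal form, and tracking signs shows that the positive, respectively negative, diagonal entries are in bijection with the positive, respectively negative, $\epsilon_i$. Summing yields $\sigma(K_{p/q}) = \sum_i \epsilon_i$.

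The main obstacle in both halves is to identify the partial-sum pattern $\sum_{i=0}^{k}\epsilon_i$ with the quantity that arises naturally from the group-theoretic or linear-algebraic computation. This rests on the arithmetic observation that the sequence $\{\epsilon_i\}$ is exactly the sign pattern of crossings one encounters when traversing a 2-bridge diagram in Schubert normal form, and therefore simultaneously governs the word $w$ and the off-diagonal entries of the Seifert matrix. Once this bridge between the diagrammatic and algebraic pictures is in place, the remaining steps are bookkeeping; the sharpest technical step is the telescoping identity that turns the Fox-derivative sum over prefixes of $w$ into the partial sums $\sum_{i=0}^{k}\epsilon_i$.
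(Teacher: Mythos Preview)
The paper does not prove this theorem at all: it is quoted from \cite{minkus1982branched} and \cite[Theorem 9.3.6]{murasugi2007knot} and used as a black box to compute $\Delta_K$ and $\sigma$ in Example~\ref{ex:2bridgeknot}. So there is no ``paper's own proof'' to compare against; any argument you supply goes beyond what the paper does.

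That said, your sketch is a reasonable outline of one standard route. The Fox-calculus computation from the Schubert presentation $\langle a,b \mid aw = wb\rangle$ with $w$ having exponent signs $\epsilon_i = (-1)^{\lfloor ip/q\rfloor}$ is exactly how one derives the Minkus formula in many textbooks; the telescoping you allude to is genuine and not hard once written out. The signature half is less solid as stated: the Seifert matrix coming from the plumbed-band surface has size $k$ (the length of the even continued fraction), not $q-1$, so its diagonal entries cannot be in literal bijection with the $\epsilon_i$. What actually happens is that each block of consecutive equal $\epsilon_i$'s corresponds to one band, and the sign contribution of that band to the signature equals the signed count of $\epsilon_i$'s in its block. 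You would need to make that correspondence precise (or, alternatively, use the Goeritz/Gordon--Litherland form on the checkerboard surface, where the matrix really is $(q-1)\times(q-1)$ and the bijection with the $\epsilon_i$ is immediate). As written, the claimed ``bijection'' is the gap.
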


We find out a pair of two knots with same Alexander polynomial, signature and knot Floer homology but having different guts.

\begin{example} \label{ex:2bridgeknot}
 The Alexander polynomial for $K_{11/15}$ is $\Delta_{K_{11/15}}(t) \doteq 4 - 7t + 4t^2$ which is the same to the one for $K_{7/15}$. Furthermore, they have the same signature $3$. However, $11/15 = 1 + [-4, -4]$ and hence by Theorem \ref{thm:gutsfor2bridgeknot}, the guts for the $K_{11/15}$ complement are two pieces of sutured solid tori with two sutures of slope $-1/2$. While since $7/15 = 1 + [-2, -8]$, the guts for the $K_{7/15}$ complement are a sutured solid torus with two sutures of slope $-1/4$.
\end{example}

This example shows that the guts contain some new information of knots.

\section{Guts and Kakimizu Complex}  \label{sec:guts:kakimizu}

In this section, we apply the properties of guts to the Kakimizu Complex of a homology class in a 3-manifold. 

We use the same notation as \cite{przytycki2012contractibility}. For more detailed description of a Kakimizu complex of a 3-manifold, one may refer to \cite{przytycki2012contractibility}.

Let $E$ be a compact connected orientable, irreducible and $\partial$-irreducible 3-manifold. Let $\gamma$ be a union of oriented disjoint simple closed curves on $\partial E$, which does not separate any component of $\partial E$. We fix a class $\alpha$ in the homology group $H_2(E,\partial E,\Z)$ satisfying $\partial \alpha = [\gamma]$. A \emph{spanning surface} is an oriented surface properly embedded in $E$ in the homology class $\alpha$ whose boundary is homotopic with $\gamma$.

We now define the simplicial complex $MS(E, \gamma, \alpha)$. The vertex set of $MS(E,\gamma,\alpha)$ is defined to be $\mathcal{MS}(E,\gamma,\alpha)$, the set of isotopy classes of spanning surfaces which have minimal genus, i.e. minimize the Thurston norm. We span an edge on $\sigma,\sigma' \in \mathcal{MS}(E,\gamma,\alpha)$ only if they have
representatives $S\in\sigma,S'\in \sigma'$ such that the
(connected) lift of $E\setminus S'$ to the infinite cyclic cover
associated with $\alpha$ intersects exactly two lifts of
$E\setminus S$. In other words, 
this means that the \emph{Kakimizu distance} between $\sigma$ and
$\sigma'$ equals one. Simplices are spanned on all complete subgraphs of the 1–skeleton, i.e. ${MS}(E,\gamma,\alpha)$ is the flag complex spanned on its 1–skeleton.

\begin{definition}
	We call a simplex in the Kakimizu Complex \emph{maximal} if it can not be properly contained in another simplex.
\end{definition}

The goal of this section is to prove the following theorem.
\begin{theorem} \label{thm:kakimizu}
	Let $M$ be an irreducible, orientable $3$-manifold with boundary a disjoint union of tori $\sqcup_{i=1}^n P_i$ and non-degenerate Thurston norm. The dimension of a maximal simplex in a Kakimizu Complex ${MS}(E,\gamma,\alpha)$ is an invariant of the Kakimizu Complex.
\end{theorem}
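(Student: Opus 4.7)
The plan is to reduce the theorem to the invariance of the guts established in Theorem~\ref{thm:gutsforhomology}, via a correspondence between maximal simplices of ${MS}(E,\gamma,\alpha)$ and facet surfaces for $\alpha$. A $k$-simplex in the Kakimizu complex consists of $k+1$ pairwise Kakimizu-adjacent isotopy classes of minimum-genus spanning surfaces; by the standard Helly-type property of the Kakimizu complex (which follows from the infinite-cyclic-cover definition of adjacency), such a pairwise adjacent collection admits simultaneously pairwise disjoint representatives. Hence a maximal $(k-1)$-simplex corresponds exactly to a facet surface $F = \Sigma_1 \sqcup \cdots \sqcup \Sigma_k$ representing $\alpha$, and it suffices to show that $k$ depends only on $\alpha$ and not on the choice of facet surface. (For concreteness I treat the case $\alpha$ primitive, which is the setting of Theorem~\ref{thm:gutsforhomology}; the non-primitive case can be handled by analogous bookkeeping.)

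Given a facet surface $F$, I would first isotope $F$ off $\Gamma(\alpha)$, using the argument in the proof of Lemma~\ref{minavoid}. By Definition~\ref{reduced} the complement $N = M \setminus \mathrm{int}(\Gamma(\alpha))$ is a disjoint union of product $I$-bundles $W_j = X_j \times I$ (the windows), with each $X_j$ a connected oriented surface glued to $\Gamma(\alpha)$ via its horizontal boundary. Each connected component $\Sigma_i$ of $F$ lies in a single window $W_j$ and, being a properly norm-minimizing surface in a product $I$-bundle representing a primitive horizontal class, is isotopic to a horizontal section $X_j \times \{t\}$. Since all horizontal sections of a given window are parallel while components of $F$ are pairwise non-parallel, each $W_j$ contains at most one component of $F$.

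The crux of the argument is then that the components of $F$ are in bijection with the set of windows $W_j$ for which $[X_j \times \{t\}] = \pm\alpha$ in $H_2(M,\partial M;\Z)$. Indeed, every $\Sigma_i$ must represent $\alpha$, so the window containing it satisfies this condition; conversely, if some window $W_j$ with $[X_j \times \{t\}] = \pm\alpha$ contained no component of $F$, then a horizontal section of $W_j$ would be a properly norm-minimizing surface representing $\alpha$, disjoint from $F$ and non-parallel to any $\Sigma_i$ (since it lies in a distinct window of $N$), contradicting the maximality of $F$. By Theorem~\ref{thm:gutsforhomology} the sutured manifold $\Gamma(\alpha)$ and its embedding in $M$ are determined up to equivalence by $\alpha$; consequently the list of windows and their horizontal-section homology classes is also determined by $\alpha$. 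Thus the integer $k$ is an invariant of $\alpha$, and the dimension $k-1$ of any maximal simplex of ${MS}(E,\gamma,\alpha)$ is likewise an invariant.

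I expect the main obstacle to be the Helly-type step, i.e.\ the passage from pairwise Kakimizu-adjacency (defined here via lifts to the infinite cyclic cover) to simultaneous disjoint realizability of all $k+1$ surfaces, so that a maximal simplex really does correspond to a facet surface. This is standard in the Kakimizu complex literature, but must be formulated carefully in the cover-theoretic language used in this paper before it can be combined with the homological counting above.
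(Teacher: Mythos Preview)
Your overall plan—identify maximal simplices with facet surfaces and then show that the number of components of a facet surface is determined by $\Gamma(\alpha)$—is exactly the paper's strategy, and you correctly flag the Helly-type step (simultaneous disjoint realisability) as the delicate point. The paper handles that step via potential functions on the infinite cyclic cover (Lemmas~\ref{lem:unique} and~\ref{lem:monotonic}), which also yield a cyclic ordering of the surfaces and a decomposition of $M\spl S$ into ``layers'' $\sL_0,\ldots,\sL_n$.

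The substantive gap is in your description of $N=M\setminus\mathrm{int}(\Gamma(\alpha))$. This is \emph{not} a disjoint union of product $I$-bundles, and the windows of Definition~\ref{reduced} are not its components. The windows are pieces of the sutured manifold $M\spl F$ obtained \emph{after} decomposing each layer along product annuli and disks; their horizontal boundaries are proper subsurfaces of the $\Sigma_i$, not whole components. When one passes from $M\spl F$ back to $M$ by regluing along $F$ and then deletes $\mathrm{int}(\Gamma(\alpha))$, the resulting $N$ is a single foliated piece assembled from $\eta(F)$ and the windows, glued along subsurfaces of $F$ (this is precisely the description of $M'$ in the proof of Theorem~\ref{T2}). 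Consequently there is no bijection between components of $F$ and ``windows whose horizontal section represents $\pm\alpha$'': a window's horizontal section typically has strictly smaller norm than $\alpha$, and $N$ may well be connected even when $F$ has many components. Your invocation of Lemma~\ref{minavoid} is also off target: that lemma produces a surface representing $nz+u$ for large $n$, not a surface representing $z$ itself.

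The paper's replacement for your counting step is Lemma~\ref{lem:atmostonegut}: each component of $M\spl F$ (each layer $\sL_i$) contains at most one guts component, and since no $\sL_i$ is a product sutured manifold (the $\Sigma_i$ being pairwise non-isotopic) it contains exactly one. Hence $n+1=|\Gamma(\alpha)|$. To repair your argument you should abandon the attempt to push $F$ into the window region and instead count the layers of $M\spl F$ directly, invoking Lemma~\ref{lem:atmostonegut}.
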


Let $\Delta$ be a maximal simplex of the Kakimizu Complex ${MS}(E,\gamma,\alpha)$ and denote its vertices as $\sigma_0,\ldots, \sigma_n$. As in \cite[Section 3]{przytycki2012contractibility}, we can choose  representatives $S_0,\ldots, S_n$ of the vertices  such that each two surfaces are almost transverse and have simplified intersection. Therefore the distance between each two surfaces is 1 and $S_0,\ldots, S_n$ are pairwise disjoint. Let $S$ be the union of $S_0,\ldots, S_n$.

Denote $\widetilde{M}$ as the infinite cyclic cover
 of $M$ associated with the (kernel of the) element
of $H^1(M,\Z)$ dual to $\alpha$. Let $\pi \colon \widetilde{M}\rightarrow M$ be the covering map. Let $\tau$ be the generator of the group of covering transformations of $\widetilde{M}$. Pick a base point $x_0$ disjoint with $S$ and let $\tilde x_0$ be a lift of $x_0$. The hypothesis that $\gamma$ does not separate the components of $\partial M$ guarantees that $N_i \stackrel{\Delta}{=} M\setminus S_i$ is connected. Let $\tilde N^0_i$ denote the lift of $N_i$ to $\widetilde{M}$ that contains $\tilde x_0$ and denote $\tilde N^j_i=\tau^j(\tilde N^0_i)$ for $j\in \Z$. Denote also $\tilde S^j_i=\overline{\tilde N^{j-1}_i}\cap \overline{\tilde N^j_i}$ for $j\in \Z$.

For a point $q$ in $\tilde M$ that is disjoint with the lifts of $S_i$, we define the \emph{potential} function $\phi_i(q)$ to be the index $j$ of $\tilde N^j_i$ which contains $q$. We denote $\tilde S$ as the union of lifts of $S$ and let $\Phi(q) = \sum_{i=0}^n \phi_i(q)$ for $q$ in the complements of $\tilde S$.

\begin{lemma} \label{lem:unique}
	If $\Phi(q) = \Phi(q')$, then $\phi_i(q) = \phi_i(q')$ for $0 \le i \le n$.
\end{lemma}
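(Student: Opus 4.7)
The plan is to exploit the strict stacking of the lifts $\tilde{S}^{j}_{i}$ in the cyclic cover $\widetilde{M}$. Since $\sigma_0,\ldots,\sigma_n$ span a simplex, every pair of them has Kakimizu distance $1$, and the representatives $S_0,\ldots,S_n$ have been chosen to be pairwise disjoint and with simplified intersection. I would first record the structural consequence of distance one: for any pair $i\ne j$, $\tilde{N}^{0}_{i}$ meets exactly two lifts of $N_j$, which means that between any two successive lifts of $S_i$ sits exactly one lift of $S_j$. Applied simultaneously to all pairs (using pairwise disjointness of the chosen representatives), this forces the stack of lifts along the cover direction to be periodic, with one $\tau$-period consisting of precisely one lift of each of $S_0,\ldots,S_n$, in some cyclic order.

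Next, enumerate the components of $\widetilde{M}\setminus\tilde S$ as $R_k$, $k\in\Z$, according to the induced linear order along the cover direction, so that $R_{k+1}$ is the region directly across a single lift $\tilde{S}^{\ast}_{i(k)}$ from $R_k$. By construction each $\phi_i$ is constant on every $R_k$. Crossing from $R_k$ to $R_{k+1}$ increases $\phi_{i(k)}$ by $1$ while leaving the other potentials unchanged, so
\[
\Phi(R_{k+1})=\Phi(R_k)+1.
\]
Hence $\Phi$ is strictly monotone along the linearly ordered family $\{R_k\}_{k\in\Z}$ and takes each integer value exactly once.

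Consequently, $\Phi(q)=\Phi(q')$ forces $q$ and $q'$ to lie in the same region $R_k$, and then the constancy of each $\phi_i$ on $R_k$ immediately gives $\phi_i(q)=\phi_i(q')$ for all $0\le i\le n$. The only non-routine input is the strict stack structure in the cover; this reduces to the alternation property for each pair $(S_i,S_j)$, which is immediate from the definition of Kakimizu distance one once pairwise disjoint representatives are fixed, so no additional technical work beyond what has already been set up in this section is required.
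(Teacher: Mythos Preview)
Your argument assumes the very structure that the paper uses this lemma (together with Lemma~\ref{lem:monotonic}) to \emph{establish}: namely, that the components of $\widetilde{M}\setminus\tilde S$ form a $\Z$-indexed linear chain, each separated from the next by a single lifted surface with co-orientation pointing from $R_k$ to $R_{k+1}$. You assert that this ``reduces to the alternation property for each pair $(S_i,S_j)$'', but you do not carry out that reduction, and it is not immediate. Pairwise alternation tells you that each $\tilde N^0_i$ meets exactly two consecutive $\tilde N^k_j$; it does not by itself tell you that $\widetilde{M}\setminus\tilde S$ has exactly one component per value of $\Phi$, nor that adjacent components are always separated by a single lifted surface crossed in the positive direction. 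In particular, when the $S_i$ are disconnected (which the setup allows), a lift $\tilde S^j_i$ is disconnected and removing it from a connected region can a priori produce more than two pieces, so the ``linear order along the cover direction'' is not automatic. Without that, you cannot conclude that $\Phi$ is injective on components, which is exactly the content of the lemma.

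The paper avoids this circularity with a short direct contradiction. If $\Phi(q)=\Phi(q')$ but the $\phi_i$ do not all agree, there exist $i,i'$ with $\phi_i(q)<\phi_i(q')$ and $\phi_{i'}(q)>\phi_{i'}(q')$. Then $q$ witnesses that $\tilde N^{\phi_i(q)}_i$ meets $\tilde N^{\phi_{i'}(q)}_{i'}$, and $q'$ witnesses that $\tilde N^{\phi_i(q')}_i$ meets $\tilde N^{\phi_{i'}(q')}_{i'}$. Applying a power of the deck transformation $\tau$ to the second statement shifts the indices so that $\tilde N^{\phi_i(q)}_i$ also meets a lift $\tilde N^{m}_{i'}$ with $m\le \phi_{i'}(q)-2$. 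Hence $\tilde N^{\phi_i(q)}_i$ meets at least three distinct lifts of $N_{i'}$, contradicting Kakimizu distance one. This argument uses only the pairwise distance-one hypothesis and $\tau$-equivariance, and the stacking picture you wanted is then deduced \emph{afterwards} in the proof of Theorem~\ref{thm:kakimizu}. So your approach has the logic inverted: either supply an honest proof of the global linear ordering from pairwise alternation (which is possible but is real work, not ``no additional technical work''), or argue directly as the paper does.
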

\begin{proof}
	Suppose $\Phi(q) = \Phi(q')$ and the equalities $\phi_i(q) = \phi_i(q')$ do not hold for all $i$. Then there exist $i,i'$ such that
  \[\phi_i(q) < \phi_i(q')\]
  and
  \[\phi_{i'}(q) > \phi_{i'}(q').\]
  
So $\phi_i(q') = \phi_i(q) + k$ and $\phi_{i'}(q') = \phi_{i'}(q) - k'$ where $k,k' \ge 1$. Denote $j$ and $j'$ as $\phi_i(q)$ and $\phi_{i'}(q)$. Then $\tilde N^j_i$ intersects $\tilde N^{j'}_{i'}$ and $\tilde N^{j+k}_i$ intersects $\tilde N^{j'-k'}_{i'}$. By applying the covering transformation $\tau$ on $\tilde N^{j+k}_i$ and $\tilde N^{j'-k'}_{i'}$, we know that $\tilde N^{j}_i$ intersects $\tilde N^{j'-k'-k}_{i'}$. Therefore, $\tilde N^{j}_i$ intersects both $\tilde N^{j'}_{i'}$ and $\tilde N^{j'-k'-k}_{i'}$. This means the distance between $S_i$ and $S_i'$ is at least $k+k' \ge 2$ (see \cite[Definition 2.1]{przytycki2012contractibility}), which is a contradiction.

\end{proof}	

\begin{lemma} \label{lem:monotonic}
	If $\Phi(q') = \Phi(q) + 1$, then there exists an $i_0$ such that $\phi_{i_0}(q') = \phi_{i_0}(q) + 1$ and $\phi_{i}(q') = \phi_{i}(q)$ for $i \ne i_0$.
\end{lemma}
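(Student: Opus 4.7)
The plan is to leverage Lemma \ref{lem:unique} to reduce the statement to a simple integer-valued question. Since $\Phi(q)$ determines $(\phi_0(q), \ldots, \phi_n(q))$, we may unambiguously define $\psi_i(N)$ to be the common value of $\phi_i$ on the level set $\{\Phi = N\}$. The goal then becomes to show that each increment $\psi_i(N+1) - \psi_i(N)$ is nonnegative; combined with the identity $\sum_{i=0}^n (\psi_i(N+1) - \psi_i(N)) = 1$, this forces exactly one coordinate to jump by $1$ while the rest stay fixed, which is the desired conclusion.

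The crux is a pairwise rigidity estimate, whose input is that $\sigma_i$ and $\sigma_{i'}$ for $i \neq i'$ are joined by an edge of $MS(E,\gamma,\alpha)$, so their Kakimizu distance is $1$. By the definition of that distance, the connected lift $\tilde N^0_{i'}$ meets exactly two lifts of $E \setminus S_i$, and those must be consecutive, say $\tilde N^k_i$ and $\tilde N^{k+1}_i$ for some integer $k = k(i,i')$. Applying the deck transformation $\tau^m$, every $\tilde N^m_{i'}$ then meets only $\tilde N^{m+k}_i$ and $\tilde N^{m+k+1}_i$. Hence for every $q \in \tilde M \setminus \tilde S$,
\[
\phi_i(q) - \phi_{i'}(q) \in \{k(i,i'),\, k(i,i')+1\},
\]
so the pairwise difference $\psi_i - \psi_{i'}$ takes at most two consecutive integer values as $N$ varies.

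With the rigidity in hand, monotonicity of each $\psi_i$ follows by contradiction: if $\psi_{i_0}(N+1) - \psi_{i_0}(N) \leq -1$ for some $i_0$, then since the total increment equals $+1$, some other index $i_1$ must satisfy $\psi_{i_1}(N+1) - \psi_{i_1}(N) \geq 1$. Subtracting, the difference $\psi_{i_1} - \psi_{i_0}$ would jump by at least $2$ in going from $N$ to $N+1$, contradicting the two-value constraint derived above. The main obstacle is isolating this rigidity input: once one recognizes that edge-adjacency of vertices in the Kakimizu complex is precisely what constrains the pairwise differences of the $\phi_i$'s to two consecutive integers, the monotonicity and the final counting argument are short.
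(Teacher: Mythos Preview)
Your proof is correct, but it takes a different route from the paper's. The paper argues topologically: choose a path $\eta$ in $\tilde M$ from $q$ to $q'$ transverse to $\tilde S$; since $\Phi$ changes by $\pm 1$ at each crossing, somewhere along $\eta$ there are adjacent points $p,p'$ with $\Phi(p)=\Phi(q)$, $\Phi(p')=\Phi(q')$, separated by a single sheet $\tilde S_{i_0}^{j_0}$; then Lemma~\ref{lem:unique} transfers the conclusion from $(p,p')$ back to $(q,q')$.

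Your argument is instead purely combinatorial. You extract the pairwise rigidity $\phi_i-\phi_{i'}\in\{k(i,i'),k(i,i')+1\}$ directly from the edge-adjacency in the Kakimizu complex (this is essentially the same input that drives the paper's proof of Lemma~\ref{lem:unique}), and then a counting argument on the increments $\psi_i(N+1)-\psi_i(N)$ finishes. This avoids the path argument entirely and yields the monotonicity $\psi_i(N+1)\ge\psi_i(N)$ as a clean intermediate statement, which is arguably more transparent. The paper's path argument, on the other hand, is geometrically vivid and makes the ``one sheet crossed'' picture explicit. One small point worth making precise in your write-up: the claim that the two lifts met by $\tilde N^0_{i'}$ are \emph{consecutive} uses connectedness of $\tilde N^0_{i'}$ together with the fact that $\phi_i$ changes by $\pm 1$ across each sheet of $\tilde S_i$, so an intermediate-value argument forces any two indices hit to be adjacent.
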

\begin{proof}
 Since $\tilde M$ is connected, there is a path $\eta$ connecting $q$ and $q'$, and furthermore $\eta$ is transverse to the lifts of $S$. We know that when $\eta$ passes a component of lifts of $S$, the value of $\Phi$ is increased or decreased by 1. So there exist a pair of points $p$ and $p'$ on $\eta$ such that $\Phi(p) = \Phi(q)$, $\Phi(p') = \Phi(q')$. Furthermore, $p$ and $p'$ is separated by a $S_{i_0}^{j+0}$. Then $\phi_{i_0}(p') = \phi_{i_0}(p) + 1$ and $\phi_{i}(p') = \phi_{i}(p)$ for $i \ne i_0$. 
 
By Lemma \ref{lem:unique}, we know that $\phi_{i}(q) = \phi_{i}(p)$ and $\phi_{i}(q') = \phi_{i}(p')$ for all $0 \le i \le n$. Hence the equalities in the lemma hold.
\end{proof}

Now we start the proof of Theorem~\ref{thm:kakimizu}.

\begin{proof}[Proof of Theorem~\ref{thm:kakimizu}]
By the preceding lemmas, for each $k \in \Z$, $\Phi(q) = k$ determines unique solutions of $\{\phi_i(q)\}$. Furthermore, increasing $\Phi(q)$ by 1 will result in increasing a $\phi_{i_0}(q)$ by 1. We call this property the \emph{monotonicity} of $\Phi$.

Denote $U(k)$ as $\{q|\Phi(q) = k\}$. Then $\cup_{k \in Z} U(k) = \tilde M \setminus \tilde S$ and $\overline{U(k)}\cap \overline{U(k')} \ne \emptyset$ if and only if $|k-k'| = 1$. Furthermore for an integer $k_0$, $\overline{U(k_0)}\cap \overline{U(k_0+1)}$ is a subset of $\tilde S_{i_0}^{j_0}$ by Lemma \ref{lem:monotonic}. Note that $\tilde S_{i_0}^{j_0}$ is $\overline{\tilde N^{j_0-1}_{i_0}}\cap \overline{\tilde N^{j_0}_{i_0}}$. From the monotonicity of $\Phi$, we know that only one $k$ satisfies that $\overline{U(k)}\cap \overline{U(k+1)}$ is a subset of $\tilde S_{i_0}^{j_0}$. Hence $\overline{U(k_0)}\cap \overline{U(k_0+1)}$ is $\tilde S_{i_0}^{j_0}$.

By arranging the order of $S_i$, we can let $\overline{U(0)}\cap \overline{U(1)}$ be $\tilde S_{0}^{1}$, $\overline{U(1)}\cap \overline{U(2)}$ be $\tilde S_{1}^{1}$, ..., $\overline{U(n)}\cap \overline{U(n+1)}$ be $\tilde S_{n}^{1}$. By applying the projection map $\pi$ on $\overline{U(i)}$ for $0\le i \le n$, we denote $\sL_i = \pi(\overline{U(i)})$. Then $\sL_i \cap \sL_{i+1} = S_i$ (mod $(n+1)$) for $0\le i \le n$.

We say $\sL_i$ is the $i$-th \emph{layer} and think of it as a sutured manifold with $R_- = S_i$ and $R_+ = S_{i+1}$. We call the combination of layers a \emph{sutured bundle} structure for $M$ with respect to $S$. 

Since $S_i$ is not isotopic to $S_{i'}$ for $i \ne i'$, each $\sL_i$ is not a product sutured manifold. 

Suppose there is a properly norm-minimizing surface $T$ such that $[T] = \alpha$, $T$ is disjoint with $S$ and a component of $T$ is not parallel to $S$. Without loss of generality, we assume that component is in $\sL_0$. Let $T'$ denote $T \cap \sL_0$. Since $T$ is homologous to $S_0$, there is a region $N$ of $M$ such $\partial N$ is $T^- \cup S_0$ where $T^-$ comes from reversing the orientation of $T$. Then the boundary of $N\cap \sL_0$ is $(T')^- \cup S'_0$ where $S'_0$ is the union of some components of $S_0$. Hence $T'$ is homologous to $S'_0$. Denote $T_0$ as $(S_0 - (S'_0)) \cup T'$ and perturb $T_0$ in the neighborhood of $S_0$ so that $T_0$ is in the interior of $N$. Since $T'$ is not parallel to $S$, we know that $T_0$ represents a vertex of ${MS}(E,\gamma,\alpha)$ different from $S_0, \ldots, S_n$. By adding $T_0$, we have a sutured bundle with $(n+2)$ layers and hence the distance between $T_0$ and $S_i$ is 1 for $0 \le i \le n$. Therefore $T_0, S_0, \ldots, S_n$ forms a simplex contains the maximal simplex $\Delta$ which is a contradiction.

Let $F$ be a facet surface contains $S$. From the above, we know that $M\spl F$ is the union of $M\spl S$ and some product sutured manifolds. Therefore the guts $\Gamma(M,F)$ of $M\spl F$ is the guts of $M\spl S$. By Lemma \ref{lem:atmostonegut}, the guts of $\sL_i$ is connected, i.e. a component of the guts of $\Gamma(M,F)$. Hence the number of components of $\Gamma(M,F)$ is $n+1$. In other words, the dimension $n$ of the maximal simplex $\Delta$ is $|\Gamma(M,F)|-1$. By Theorem \ref{thm:gutsforhomology} and Definition \ref{def:gutsforhomology}, $n$ is equal to $|\Gamma(\alpha)| -1$, which does not depend on the selection of the maximal simplex $\Delta$. 
	
\end{proof}

\nocite{juhasz2008knot}

\section{Conclusion}

The main result in this paper was motivated by understanding the relation between the Thurston norm and sutured manifolds. In a sequel, this will be applied to understand the minimal volume 3-cusped hyperbolic manifolds. As described before, it will be interesting to see the ramifications for the structure of the Kakimizu complex. 

The main theorem in this paper should generalize to give guts for homology classes of sutured manifolds. There is a polytope associated to the homology of a sutured manifold, and we expect that these guts will have similar relations with this polytope. 

Another possible direction that the main results of this paper may be useful for is to understand the structure of the Thurston norm polytope. 
 In particular, it is unknown for betti number $>2$ which polytopes can be the unit ball of the Thurston norm of a 3-manifold? The local structure of the polytope near a vertex should be governed by the polytope of the sutured manifold of the face associated to that point. But it is harder to imagine how this local structure could be integrated to get a global picture of the polytope.

The Example \ref{ex:2bridgeknot} indicates that the guts contains more information than sutured Floer homology.  
One may ask whether one can incorporate more information to refine sutured Floer homology? 
In particular, are there homology operations on Floer homology that might distinguish sutured solid tori with one or more components?

\pagestyle{headings}

\bibliographystyle{amsalpha}
%\nocite{*}
\bibliography{guts}

\end{document}